\newtheorem{teorema}{Theorem}
\newtheorem{lema}{Lemma}
\newtheorem{pro}{Proposition}
\newtheorem{cor}{Corollary}
\newtheorem{teoremaLetra}{Theorem}
\newcounter{remark}
\begin{document}
\title{On the  Piecewise Holomorphic Systems with Three Zones}

\author{Carlos Vinícius das Neves Silva \and Paulo Ricardo da Silva}

\address{Departamento de Matem\'{a}tica -- Instituto de Bioci\^{e}ncias Letras
	e Ci\^{e}ncias Exatas, UNESP -- Univ Estadual Paulista, Rua C. Colombo, 2265,
	CEP 15054--000 S\~{a}o Jos\'{e} do Rio Preto, S\~{a}o Paulo, Brazil}
\email{carlos.vn.silva@unesp.br and paulo.r.silva@unesp.br}
\date{}

\subjclass[2010]{32A10, 34C20, 34A34, 34A36, 34C05.}

\keywords {Piecewise holomorphic systems, limit cycles, Melnikov function}
\date{}
\dedicatory{}
\maketitle

\begin{abstract}
We study piecewise-smooth systems with three zones,
$\dot{z} = f_i(z)$, $i = 1,2,3,$ whose discontinuity set $\Sigma$ consists either of a pair of parallel lines or a pair of circles tangent 
to each other internally or externally.
Each $f_i:\overline{\mathbb{C}} \to \overline{\mathbb{C}}$ is assumed to be a holomorphic function.
We establish conditions ensuring the existence of limit cycles in such systems and provide lower bounds 
for the maximum number of limit cycle.
Our approach combines the Melnikov method, local integrability properties of holomorphic systems, 
and the existence of normal forms around 
zeros and poles.
\end{abstract}

\section{Introduction}

Non-smooth dynamical models have attracted significant attention in recent 
decades due to their ability to describe various phenomena observed in engineering, economics, 
biology, and other applied sciences; see, for example, \cite{refb22}. In such systems, abrupt changes in dynamics -- often caused 
by discontinuities -- give rise to complex and rich behaviors that do not appear in smooth systems, 
making them a central topic in current research in dynamical systems.

One of the main problems in the qualitative theory of differential equations, both for smooth and 
non-smooth systems, is the determination of the existence of limit cycles and the establishment 
of bounds for their maximum number, as presented in \cite{refb23}. This question is closely related to Hilbert’s 16th problem, 
originally formulated for planar polynomial differential systems, which asks for the maximum number 
and location of limit cycles in such systems. While the smooth case already poses deep mathematical 
challenges, the piecewise-smooth setting introduces additional difficulties of analytical and geometric nature.

In this work, we address this problem in the context of \textbf{piecewise holomorphic systems} with three zones, i.e.,
\[
\dot{z} = f_i(z), \quad i=1,2,3,
\]
where each $f_i : \overline{\mathbb{C}} \to \overline{\mathbb{C}}$ is a holomorphic function. The holomorphic framework 
offers significant advantages: these systems are locally integrable, their phase portraits near zeros 
and poles are completely classified, and they admit normal forms that facilitate analysis.

Piecewise holomorphic systems have already been studied in different settings. For instance, 
\cite{refb2} provided conditions ensuring the existence of limit cycles for systems separated by a 
straight line; \cite{refb3} investigated bounds on the number of limit cycles in piecewise polynomial 
holomorphic systems; and \cite{refb31} analyzed simultaneous bifurcations of limit cycles. These works, 
together with others in the literature, show the potential of this class of systems for addressing classical problems in a new setting.

Here, we focus on systems where the discontinuity set consists of either \textbf{two parallel lines} or 
\textbf{two circles}. In the case of circles, we emphasize configurations where they are 
\textbf{tangent}---either externally or internally---which introduces additional challenges due 
to the curvature and the local geometry near the tangency point.

For example, in the parallel-line configuration, the discontinuity set is given by
\[
\Im(z) = 1 \quad \text{and} \quad \Im(z) = -1,
\]
dividing the complex plane into three regions:
\begin{equation}
\Sigma_{+}=\{\Im(z) > 1\}, \quad \Sigma_{c}=\{|\Im(z)|<1\}, \quad \Sigma_{-}=\{\Im(z) < -1\},\label{reg+-c}
\end{equation}
with the dynamics on the boundaries governed by Filippov’s convention \cite{refb4}.

Denote \[\mathbb{S}_{1} = \{|z| = 1\}, \quad \mathbb{S}_{2} = \{|z - 2| = 1\}, \quad \mathbb{S}_{3} = \left\{ \left| z - \frac{2}{3} \right| = \frac{1}{3} \right\}.
\]

In the case of  externally tangent circles,  the complex plane is divided into three regions:
\begin{equation} 
\Sigma_{+}^E=\{|z|<1\},\quad \Sigma_{c}^E=\{|z|>1,|z - 2| >1\},\quad \Sigma_{-}^E=\{|z - 2| <1\}
\label{reg+-cE}
\end{equation}
and in the case of internally tangent circles,  the complex plane is divided into three regions:
\begin{equation} 
\Sigma_{+}^I=\left\{\left| z - \frac{2}{3} \right| < \frac{1}{3}\right\},\quad \Sigma_{c}^I=\left\{\left| z - \frac{2}{3} \right| > \frac{1}{3},|z|<1\right\},\quad \Sigma_{-}^I=\{|z | >1\}.
\label{reg+-cI}\end{equation}

A fundamental problem in the theory of non-smooth dynamical systems is to determine the number of limit 
cycles that bifurcate from a center when it is perturbed. While most studies have addressed systems whose 
discontinuity set is a line or an angular sector (i.e., two or more rays meeting at a common point), some works 
have considered more general discontinuity geometries.

In such cases, various techniques have been employed to estimate the number of bifurcating cycles. 
For example, \cite{refb11,refb12} used the averaging method to analyze these bifurcations, whereas 
\cite{refb32,refb33} applied the Melnikov integral to study the number of limit cycles emerging from 
the perturbation of a center.

Here, we study the number of limit cycles that bifurcate from the linear center $\dot{z}=iz$ when perturbed 
by a piecewise polynomial holomorphic system separated by the lines $\Im(z)=1$ and $\Im(z)=-1$. 
We also investigate bifurcations from the center $\dot{z}=-i(z-1)$ when perturbed by two classes of 
piecewise polynomial holomorphic systems: the first having $\mathbb{S}_{1}$ and $\mathbb{S}_{2}$ 
as the discontinuity curves, and the second having $\mathbb{S}_{1}$ and $\mathbb{S}_{3}$.\\
    
\subsection{Our main results.}
In \textbf{Section 3},  we seek conditions that ensure the existence of limit cycles in piecewise holomorphic systems. 
The normal forms associated with holomorphic systems are  
\[1,\quad (a+ib)z, \quad z^{n},\quad
\dfrac{z^{n}}{1+cz^{n-1}}, \quad\mbox{and}\quad \dfrac{1}{z^{n}}. \]

Here we use it to construct examples of limit cycles in 
piecewise holomorphic systems with a straight-line discontinuity. 

We consider discontinuous systems with three zones of the form $\Sigma_+,\Sigma_c,\Sigma_-$ separated by $\Im (z)=\pm 1$, as in \eqref{reg+-c}:
\begin{enumerate}
    \item \quad$\Sigma_+: (a+ib)z$;\quad  $\Sigma_c: iz$;\quad  $\Sigma_-:(c+id)z$. (Proposition \ref{lc1});
    \item \quad$\Sigma_+: (a+ib)z$;\quad $\Sigma_c: z^3$;\quad $\Sigma_-: (c+id)z$. (Proposition \ref{lc3} );
    \item\quad $\Sigma_+: (a+ib)z$;\quad $\Sigma_c: \frac{z^{2}}{1+cz}$;\quad $\Sigma_-: (c+id)z$.  (Proposition \ref{lc31});
    \item \quad$\Sigma_+: (a+ib)z$;\quad $\Sigma_c: \frac{1}{z}$;\quad $\Sigma_-: (c+id)z$.  (Proposition \ref{lc5}).
\end{enumerate}
We begin by analyzing the case in which the discontinuity set consists of two parallel lines. 
Then, by applying the M\"obius transformations we map the parallel lines to two internally tangent circles in the first case, and to two externally 
tangent circles in the second. This allows us to establish conditions guaranteeing the existence 
of limit cycles when the discontinuity set is formed by tangent circles.

In \textbf{Section 4} we proof the following three theorems that analyze the number of limit cycles bifurcating from a 
center when perturbed by the aforementioned classes of piecewise polynomial holomorphic systems.  In what follows  we denote
\begin{eqnarray*}
    h^{+}(z)=\sum_{j=0}^{\ell}(a^{+}_{j}+ib^{+}_{j})z^{j}, \quad
    h^{c}(z)=\sum_{j=0}^{\ell}(a^{c}_{j}+ib^{c}_{j})z^{j}, \quad
    h^{-}(z)=\sum_{j=0}^{\ell}(a^{-}_{j}+ib^{-}_{j})z^{j}.
\end{eqnarray*}

\begin{teoremaLetra}\label{teoa}
Consider the piecewise polynomial holomorphic system of the form
\begin{equation}\label{eq.it1}
\dot{z}=iz+\varepsilon h^{\sigma}(z), \quad z\in\Sigma_{\sigma},\quad  \sigma=+,c,-
\end{equation} where $\ell=4$ and $\Sigma_{\sigma}$ is given by \eqref{reg+-c}.
Then the lower bound for the number of limit cycles bifurcating from the periodic solutions, 
for $\varepsilon$ sufficiently small, is $4$.
\end{teoremaLetra}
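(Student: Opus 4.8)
The plan is to compute the first-order Melnikov (averaged displacement) function associated with the return map of \eqref{eq.it1} on the circles $|z|=r$, which are the periodic orbits of the unperturbed center $\dot z = iz$, and then to count its simple zeros. Since $\frac{d}{dt}|z|^2 = 2\Re(\bar z\,\dot z)=2\varepsilon\,\Re(\bar z\,h^{\sigma}(z))$, the radius $|z|$ is conserved by the unperturbed flow $z(t)=re^{it}$, and to first order in $\varepsilon$ the displacement of $|z|$ over one revolution is governed by
\[
M(r)=\int_0^{2\pi}\Re\!\big(e^{-i\theta}\,h^{\sigma(\theta)}(re^{i\theta})\big)\,d\theta,
\]
where $\sigma(\theta)$ records the zone containing $re^{i\theta}$. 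For $r\le 1$ the whole circle lies in $\Sigma_c$ and orthogonality of the trigonometric system forces $M\equiv 2\pi a_1^{c}$, so no crossing limit cycle is produced inside the strip; the relevant range is $r>1$, where the circle meets all three zones. First I would split the integral into the four arcs determined by $\theta_1(r)=\arcsin(1/r)$: the top arc in $\Sigma_+$, the bottom arc in $\Sigma_-$, and the left and right arcs in $\Sigma_c$.

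Next I would carry out the elementary integrals of $\cos((j-1)\theta)$ and $\sin((j-1)\theta)$ over each arc, for $j=0,\dots,4$. Two structural features drive the whole argument. First, many parameters drop out: the central coefficients $b_0^{c},\dots,b_4^{c}$ and $a_0^{c},a_2^{c},a_4^{c}$ contribute nothing (their integrands vanish over the symmetric central arcs), and $a_0^{\pm},a_2^{\pm},a_4^{\pm},b_1^{\pm},b_3^{\pm}$ vanish by parity on the top and bottom arcs. Second, distinct monomials collapse onto the same function of $r$ once $\theta_1$ is expressed through $\cos\theta_1=\sqrt{r^2-1}/r$; in particular the $a_3$-term $r^3\sin2\theta_1$ and the $b_2$-term $r^2\cos\theta_1$ are both multiples of $r\sqrt{r^2-1}$. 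After collecting, $M(r)$ reduces for $r>1$ to
\[
M(r)=A\,r+B\,r\arcsin\tfrac1r+C\,\frac{\sqrt{r^2-1}}{r}+D\,r\sqrt{r^2-1}+E\,r^3\sqrt{r^2-1},
\]
and a direct check shows the five real constants can be prescribed independently (e.g.\ $A$ via $a_1^{+}+a_1^{-}$, $B$ via $a_1^{c}$, $C$ via $b_0^{+}-b_0^{-}$, $E$ via $b_4^{-}-b_4^{+}$, and $D$ via the remaining $a_3,b_2$ coefficients). Thus $M$ ranges over the full $5$-dimensional space $\mathcal F=\operatorname{span}\{r,\ r\arcsin(1/r),\ \sqrt{r^2-1}/r,\ r\sqrt{r^2-1},\ r^3\sqrt{r^2-1}\}$ on $(1,\infty)$.

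To finish, I would show that $\mathcal F$ is an extended complete Chebyshev (ECT) system on $(1,\infty)$. Then every nontrivial element of $\mathcal F$ has at most $4$ zeros counted with multiplicity, and elements with exactly $4$ simple zeros exist; choosing the $h^{\sigma}$ so that $M$ is such an element, and taking $a_1^{c}\neq0$ so the center annulus produces no spurious branch, each simple zero $r_*>1$ yields, by the standard first-order Melnikov bifurcation theorem for piecewise-smooth systems with transversal crossing, a crossing limit cycle of \eqref{eq.it1} near $|z|=r_*$ for all small $\varepsilon$, giving at least $4$ limit cycles. The main obstacle is precisely this ECT verification: the relevant Wronskians mix the transcendental term $r\arcsin(1/r)$ (whose derivative reintroduces $1/\sqrt{r^2-1}$) with the algebraic functions $r^{2k-1}\sqrt{r^2-1}$, so their non-vanishing on $(1,\infty)$ must be checked by hand, most conveniently after a substitution rationalizing $\sqrt{r^2-1}$ (such as $r=\sec\phi$ or $r=\cosh\tau$) that renders each Wronskian a manifestly signed expression. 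Should the ECT argument prove delicate near $r=1^{+}$, a fallback is to exhibit one explicit coefficient vector for which $M$ has four sign changes and to verify that the corresponding zeros are simple, which already establishes the stated lower bound of $4$.
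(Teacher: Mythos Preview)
Your computation is correct and is essentially what the paper does: split the circle $|z|=r$ into four arcs for $r>1$, integrate term by term, and observe that the surviving contributions span exactly the five-dimensional space you wrote down (the paper's basis is $r\cos\theta_r,\ r^2,\ r^2\theta_r,\ r^3\cos\theta_r,\ r^5\cos 3\theta_r$ with $\theta_r=\arcsin(1/r)$, which after expressing $\cos k\theta_r$ algebraically is your span times a global factor of $r$).

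The only genuine difference is the final step. You propose to prove that the five functions form an ECT system on $(1,\infty)$, and you correctly anticipate that the transcendental term $r\arcsin(1/r)$ makes this delicate. The paper sidesteps this entirely by invoking a much weaker lemma (Lemma~\ref{lem1} in the paper): if $f_1,\dots,f_5$ are linearly independent analytic functions on an interval and one of them has constant sign there, then a linear combination with at least $4$ simple zeros exists. Thus the paper only needs to verify linear independence, which it does by evaluating the Wronskian at a single point ($W(f_1,\dots,f_5)(2)\neq 0$), and to note that $f_2=r^2$ has constant sign. This replaces your ``main obstacle'' with a one-line check; your fallback of exhibiting an explicit coefficient vector with four sign changes is also valid but still more work than what the paper actually does.
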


\begin{teoremaLetra}\label{teob}
Consider the piecewise polynomial holomorphic system of the form
\begin{equation}\label{eq.it2}
\dot{z}=-i(z-1)+\varepsilon h^{\sigma }(z), \quad z\in\Sigma_{\sigma}^E,\quad  \sigma=+,c,-
\end{equation}
where $\ell=4$ and $\Sigma_{\sigma}^E$ is given by \eqref{reg+-cE}.
Then the lower bound for the number of limit cycles bifurcating from the periodic solutions, 
for $\varepsilon$ sufficiently small, is $5$.
\end{teoremaLetra}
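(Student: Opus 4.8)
The plan is to apply the Melnikov method to the piecewise perturbation of the center $\dot z=-i(z-1)$, whose period annulus consists of the circles $|z-1|=r$ traversed clockwise with period $2\pi$. First I would set $w=z-1$, write the unperturbed solution as $z_0(s)=1+re^{-is}$, $s\in[0,2\pi]$, pass to polar coordinates $w=re^{i\phi}$, and compute the first-order radial drift $dr/d\phi$. Since $\dot\phi=-1+O(\varepsilon)$, integrating the radial equation over one loop gives the real-valued Melnikov function
\[
M(r)=\Re\left[\,\sum_{\sigma\in\{+,c,-\}}\int_{I_\sigma(r)} e^{is}\,h^{\sigma}\!\left(1+re^{-is}\right)\,ds\right],
\]
where $I_\sigma(r)\subset[0,2\pi)$ is the union of the arcs on which $z_0(s)\in\Sigma_\sigma^{E}$. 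Each simple positive zero of $M$, in the range of $r$ for which the orbit actually meets all three zones, yields for small $\varepsilon$ a limit cycle of \eqref{eq.it2}.

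Next I would determine the arcs. From $z_0(s)=1+re^{-is}$ one finds that the orbit meets $\mathbb{S}_1=\{|z|=1\}$ exactly when $\cos s=-r/2$ and meets $\mathbb{S}_2=\{|z-2|=1\}$ exactly when $\cos s=r/2$; hence for $0<r<2$ the circle splits into one arc inside $\Sigma_{+}^{E}$, one arc inside $\Sigma_{-}^{E}$, and two arcs in the outer zone $\Sigma_{c}^{E}$, with endpoints $s=\arccos(\pm r/2)$ and their reflections, while for $r\ge 2$ the orbit lies entirely in $\Sigma_{c}^{E}$. A Möbius transformation sending the tangency point $z=1$ to infinity carries the two circles to the parallel lines of Theorem \ref{teoa} and may be used to transport the arc integrals; but since that map sends the center itself to infinity, the present problem is not a direct image of the rectilinear one, and it is exactly the resulting twofold visit to $\Sigma_{c}^{E}$ that I expect to supply the extra independent term responsible for the jump from four to five.

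I would then evaluate the integrals explicitly. Expanding $e^{is}(1+re^{-is})^{j}=\sum_{k=0}^{j}\binom{j}{k}r^{k}e^{i(1-k)s}$ and integrating over each arc, the $k=1$ terms produce the arc lengths, which are affine in $\arccos(r/2)$, whereas the remaining terms produce boundary values $e^{i(1-k)s}$ at the crossing angles, and since $\sin s=\pm\tfrac12\sqrt{4-r^{2}}$ there, these are polynomials in $r$ and in $\sqrt{4-r^{2}}$. Collecting terms, $M(r)$ is a real linear combination of a finite family of generating functions of three types---$r^{m}$, $r^{m}\sqrt{4-r^{2}}$, and $r^{m}\arccos(r/2)$---whose coefficients are linear functionals of the parameters $a_j^\sigma,b_j^\sigma$ ($0\le j\le 4$). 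The objective is to show that, after the cancellations forced by taking the real part, at least six of these generators survive as linearly independent functions on $(0,2)$, so that the coefficients can be prescribed to give $M$ five sign changes.

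The main obstacle will be this last step: proving that the surviving generators form an Extended Complete Chebyshev system on $(0,2)$, so that some admissible choice of the $a_j^\sigma,b_j^\sigma$ forces exactly five simple zeros. The difficulty is that the arc endpoints depend on $r$---so there is no polynomial Melnikov function to read off, as there would be in the smooth case---and that the generators mix algebraic and transcendental terms, which makes the Wronskian computations underlying the Chebyshev property delicate. I would attack this by differentiating $M$ enough times to clear the $\arccos$ and the radical, reducing the question to a non-vanishing Wronskian, and then invoke the standard fact that a Chebyshev system of dimension six admits combinations with five simple zeros. Realizing those zeros at admissible parameter values and appealing to the Melnikov bifurcation theorem would then give the lower bound of five limit cycles.
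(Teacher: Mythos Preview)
Your direct Melnikov computation in the original coordinates is a valid route and, carried through, would give the same six-dimensional space of generators after the substitution $r\leftrightarrow 2/\rho$. But two points deserve correction and will save you work.

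First, your dismissal of the M\"obius route rests on a misreading. Under $w=-2i/(z-1)$ the system $\dot z=-i(z-1)$ becomes $\dot w=iw$: the equilibrium $z=1$ does go to $w=\infty$, but the linear field on $\overline{\mathbb C}$ has a second center at $z=\infty$, which lands at $w=0$, and the circles $|z-1|=r$ become the circles $|w|=2/r$. So the problem \emph{is} a direct image of the parallel-lines problem of Theorem~\ref{teoa}; what changes is that the transported perturbations $\tilde h^\sigma(w)=\sum_{j=0}^{4}\tfrac{b_j^\sigma-ia_j^\sigma}{2}\,(w-2i)^{j}w^{2-j}$ are Laurent polynomials in $w$ (powers $w^{-2},\dots,w^{2}$) rather than ordinary polynomials. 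This is exactly what the paper does, and it lets one reuse Lemma~\ref{melnikov- sh} verbatim. The extra independent generator that yields five zeros instead of four comes from these negative powers (they produce a $\cos 3\theta_r/r$ term absent in Theorem~\ref{teoa}), not from the ``twofold visit to $\Sigma_c^E$''---Theorem~\ref{teoa} already has two central arcs as well.

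Second, you do not need an ECT-system. The paper simply lists the six generators $f_1,\dots,f_6$, checks $W(f_1,\dots,f_6)(2)\neq 0$ numerically to get linear independence, observes that one of them ($f_3=r^2\theta_r$) keeps constant sign, and invokes Lemma~\ref{lem1} to force five simple zeros. Your plan to differentiate repeatedly to clear the radical and the inverse trigonometric term, and then verify a full Chebyshev property, is substantially harder than what is actually required.
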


\begin{teoremaLetra}\label{teoc}
Consider the piecewise polynomial holomorphic system of the form
\begin{equation}\label{eq.it3}
\dot{z}=-i(z-1)+\varepsilon h^{\sigma }(z),\quad z\in\Sigma_{\sigma}^I,\quad  \sigma=+,c,-
\end{equation}
where $\ell=3$ and $\Sigma_{\sigma}^I$ is given by \eqref{reg+-cI}.
Then the lower bound for the number of limit cycles bifurcating from the periodic solutions, 
for $\varepsilon$ sufficiently small, is $4$, for the periodic orbits that cross only $\mathbb{S}_{1}$;
or $8$, for the periodic orbits that cross both $\mathbb{S}_{1}$ and $\mathbb{S}_{3}$. 
\end{teoremaLetra}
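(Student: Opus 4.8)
The plan is to compute the Melnikov function associated with the piecewise perturbation of the center $\dot z = -i(z-1)$ and to count its simple zeros, treating separately the two families of periodic orbits distinguished by which discontinuity curves they cross.

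First I would set up the unperturbed dynamics. The center $\dot z = -i(z-1)$ has periodic orbits that are circles centered at $z=1$; I would parametrize them by their radius $r$ (equivalently by a crossing point with a real-axis transversal). The key geometric fact driving the dichotomy in the statement is that, relative to the regions \eqref{reg+-cI} bounded by $\mathbb S_1 = \{|z|=1\}$ and $\mathbb S_3 = \{|z-2/3|=1/3\}$, small circles around $z=1$ will intersect only $\mathbb S_1$, whereas larger ones will meet both $\mathbb S_1$ and $\mathbb S_3$. I would determine the two ranges of $r$ explicitly by finding the intersection points of the orbit $\{|z-1|=r\}$ with each circle, since these crossing points are exactly where the Melnikov integrand switches between the zones $\Sigma^I_+,\Sigma^I_c,\Sigma^I_-$ and hence between $h^+,h^c,h^-$.

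Next I would assemble the Melnikov function. For piecewise holomorphic systems the first-order displacement map along a closed orbit is obtained by integrating, over one period and zone by zone, the perturbation against the integrating factor of the center; because the unperturbed field is holomorphic and integrable, these integrals reduce to contour integrals of the polynomial pieces $h^\sigma(z)=\sum_{j=0}^{\ell}(a^\sigma_j+ib^\sigma_j)z^j$ over arcs of the circle $|z-1|=r$. Using the earlier integrability/normal-form machinery I would evaluate each arc integral in closed form, so that the Melnikov function $M(r)$ becomes an explicit combination of the real coefficients $a^\sigma_j,b^\sigma_j$ with coefficient functions of $r$ (and of the arc endpoints, which themselves depend on $r$ through the geometry above). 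With $\ell=3$ there are finitely many such monomial contributions per zone. The counting step is then linear-algebraic: each family yields its own Melnikov function, $M_1(r)$ on the small-orbit range and $M_2(r)$ on the large-orbit range, and I would exhibit explicit coefficient choices making $M_1$ vanish at $4$ distinct simple points and $M_2$ at $8$ distinct simple points, so that by the implicit function theorem each simple zero persists as a limit cycle for $\varepsilon$ small.

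The main obstacle I anticipate is the large-orbit case. There the Melnikov integrand is genuinely three-piece: the orbit is partitioned by its four intersection points with $\mathbb S_1$ and $\mathbb S_3$ into arcs lying in $\Sigma^I_+$, $\Sigma^I_c$, and $\Sigma^I_-$, and the arc endpoints are algebraic functions of $r$ coming from intersecting two circles. Verifying that the resulting $M_2(r)$ has a set of coefficient functions that are linearly independent enough to force $8$ simple zeros — rather than collapsing due to hidden algebraic relations among the arc integrals — is the crunch. I would handle this by organizing $M_2$ as a linear form in the $24$ real parameters $a^\sigma_j,b^\sigma_j$ and showing that the span of its $r$-coefficient functions has dimension at least $8$ (e.g.\ via a Wronskian or an evaluation-at-points argument), which then guarantees a choice of parameters producing $8$ prescribed simple zeros; the small-orbit bound $4$ follows the same way from the two-piece ($\Sigma^I_\pm$ across $\mathbb S_1$) Melnikov function.
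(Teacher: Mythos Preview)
Your overall strategy---compute the first-order Melnikov function zone by zone, then force the required number of simple zeros by showing that enough coefficient functions are linearly independent (via a Wronskian/evaluation argument) and invoking Lemma~\ref{lem1}---is exactly the paper's. But two things in your plan would not survive contact with the actual computation.

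First, the paper does \emph{not} work directly with the circles $\mathbb{S}_1$ and $\mathbb{S}_3$. Its Melnikov machinery (Theorem~\ref{melnikov}, Lemma~\ref{melnikov- sh}, Remark~\ref{obs.1}) is formulated for discontinuities given by the parallel lines $\Im(w)=\pm 1$. The key step you are missing is the M\"obius change $w=-\dfrac{2iz}{z-1}$, which sends $\mathbb{S}_3$ to $\Im(w)=1$, $\mathbb{S}_1$ to $\Im(w)=-1$, and the center $\dot z=-i(z-1)$ to $\dot w=-i(w+2i)$, whose orbits are concentric circles $|w+2i|=\sqrt{2h}$. After this transformation the crossing angles become $\theta_r^{1}=\arcsin(1/r)$ and $\theta_r^{2}=\arcsin(3/r)$, the perturbation becomes a short Laurent polynomial in $w$, and each arc integral is elementary. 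One then obtains $M_1$ as a linear combination of five explicit functions and $M$ as a combination of nine, whose Wronskians are checked to be nonzero. Without this straightening, the arc endpoints coming from the intersection of $|z-1|=r$ with $\mathbb{S}_1,\mathbb{S}_3$ are messier, and you would need to redo the Melnikov theory for circular switching curves rather than invoke Theorem~\ref{melnikov}.

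Second, your geometry is reversed. The center $z=1$ is the tangency point of $\mathbb{S}_1$ and $\mathbb{S}_3$, so a circle $|z-1|=r$ meets $\mathbb{S}_3$ for all $0<r<\tfrac{2}{3}$ and $\mathbb{S}_1$ for all $0<r<2$; hence \emph{small} orbits cross both circles and the \emph{larger} ones ($\tfrac{2}{3}<r<2$) cross only $\mathbb{S}_1$. Equivalently, after the M\"obius map the orbits with $h\in(\tfrac12,\tfrac92)$ cross only $\Im(w)=-1$ (i.e.\ only $\mathbb{S}_1$), and those with $h>\tfrac92$ cross both lines. Also, $\mathbb{S}_1$ separates $\Sigma^I_{-}$ from $\Sigma^I_{c}$ (not $\Sigma^I_{+}$ from $\Sigma^I_{-}$ as you wrote), so the two-piece Melnikov function uses $h^{-}$ and $h^{c}$. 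Finally, to produce $8$ simple zeros via Lemma~\ref{lem1} you need \emph{nine} linearly independent coefficient functions, not eight; the paper exhibits nine functions $g_1,\dots,g_9$ with $W(g_1,\dots,g_9)(5)\neq 0$.
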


The proofs of these theorems use suitable M\"obius transformations to rewrite the original 
systems into new systems whose discontinuity set consists of two parallel lines.  
We analyze the Melnikov function associated with the corresponding 
piecewise systems and estimate the number of its zeros, thereby determining the 
number of limit cycles that emerge from the perturbation of the center.

A relevant question in the study of piecewise smoth systems is the determination of the 
maximum number of limit cycles that certain classes of the systems can exhibit. While linear
Hamiltonian systems separated by a line do not admit limit cycles \cite{refb20}, those separated by
two parallel lines can have at most one \cite{refb24}. 

In this context, we study the maximum number of limit cycles that the class of systems defined by

\begin{equation}
(C1): \quad \dot{z}=\dfrac{1}{z_{1}^{\sigma}(z-z_{0}^{\sigma})} , z\in \Sigma_{\sigma},\sigma=+,-; 
\quad\quad  \dot{z}=\dfrac{1}{\sum_{j=0}^{n}z_{j}^{c}z^{j}},z\in \Sigma_{c},
\label{C1}\end{equation}
can have. In this case the upper and lower regions contain saddles points, and the central region 
contains saddle points or unions of hyperbolic sectors. For this case, we obtain the following theorem: 

\begin{teoremaLetra}\label{teod}
    System \eqref{C1} satisfies the following.
    \begin{itemize}
        \item[a)] It has at most $\frac{n(n+1)}{2}$ crossing limit cycles.
        \item[b)] For $n=1$, there exists a system with $1$ crossing limit cycle, 
        which show that the upper bound in $a)$ is attainable in this case. 
        \item[c)] For $n=2$, there exists a system with $2$ crossing limit cycle.
    \end{itemize}
\end{teoremaLetra}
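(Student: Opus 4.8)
The plan is to use the local integrability of holomorphic fields to turn the search for crossing limit cycles into the counting of real zeros of a polynomial system whose degree is governed by the central zone. Since $\dot z = 1/g(z)$ is equivalent to $g(z)\,dz = dt$, each zone carries a first integral $H^{\sigma}(z)=\Im\big(G^{\sigma}(z)\big)$, where $G^{\sigma}$ is a primitive of the denominator (the zeros of the denominators are exactly the saddles of the $1/z$ normal form, so the level sets are hyperbolic). In the outer zones the denominator $z_1^{\sigma}(z-z_0^{\sigma})$ is affine, so $G^{\sigma}$ is quadratic, and restricting to $\Im(z)=\pm 1$ (writing $z=x\pm i$) makes $H^{\sigma}$ a real quadratic $q_\sigma(x)$. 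In $\Sigma_c$ the denominator has degree $n$, hence $G^c$ has degree $n+1$, and the restrictions $\phi(x):=H^c(x+i)$ and $\psi(x):=H^c(x-i)$ are real polynomials of degree $n+1$ sharing the leading coefficient $\kappa=\Im(z_n^c)/(n+1)$.

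Next I would write the closing system for a crossing orbit meeting all three zones at the four points $x_1+i,\ x_2+i$ on $\Im(z)=1$ and $x_3-i,\ x_4-i$ on $\Im(z)=-1$: matching the first integral along the four arcs gives $q_+(x_1)=q_+(x_2)$, $\phi(x_2)=\psi(x_3)$, $q_-(x_3)=q_-(x_4)$, and $\psi(x_4)=\phi(x_1)$. Because $q_\sigma$ is quadratic, $q_\sigma(x)-q_\sigma(x')=(x-x')(\text{linear})$; discarding the tangency $x=x'$ leaves the affine involutions $x_2=s^+-x_1$ and $x_4=s^--x_3$ (this uses $\Im(z_1^{\sigma})\neq 0$, for otherwise no two distinct crossing points exist on that line and $\Sigma_\sigma$ carries no crossing arc, only lowering the count). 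Substituting reduces everything to two equations $E_2(x_1,x_3)=0$ and $E_4(x_1,x_3)=0$, each of degree $n+1$ in $(x_1,x_3)$.

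The decisive step is a degree drop. Computing the top-degree homogeneous forms of $E_2,E_4$ shows they are proportional for every $n$ — equal when $n$ is even, opposite when $n$ is odd — since $\phi,\psi$ share the leading coefficient $\kappa$ and the substitutions only contribute the factor $(-1)^{n+1}$. Replacing one equation by the suitable combination $E_2\mp E_4$ cancels the degree-$(n+1)$ part and yields a system of degrees $n+1$ and $n$, so B\'ezout bounds the solutions by $n(n+1)$. The involution $(x_1,x_3)\mapsto(s^+-x_1,\,s^--x_3)$ maps solutions to solutions and identifies the two labelings of a single geometric cycle, so genuine crossing cycles number at most $\tfrac{n(n+1)}{2}$; the degenerate fixed point $(s^+/2,s^-/2)$ and the orbits crossing a single line contribute strictly fewer and stay within this bound. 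I expect this bookkeeping to be the \emph{main obstacle}: proving the leading forms cancel uniformly in $n$, excluding the degenerate and single-line configurations, and checking that B\'ezout is not inflated by spurious components at infinity.

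For parts b) and c) I would exhibit explicit coefficients. For $n=1$ the central integral is quadratic and the reduced system is a pair of conics whose leading forms cancel to a line, giving B\'ezout number $2$ and hence at most one genuine symmetric pair; choosing $z_0^{\sigma},z_1^{\sigma},z_j^{c}$ so that this pair is real, the four crossings are transversal, and each arc remains in its zone produces exactly one crossing cycle, attaining the bound $1$. For $n=2$ the drop leaves a system of degrees $3$ and $2$, i.e.\ B\'ezout $6$ and at most $3$ cycles; I would select coefficients realizing two real symmetric solution pairs and verify transversality and correct zone-containment of the corresponding arcs, thereby displaying a system in the class \eqref{C1} with $2$ crossing limit cycles.
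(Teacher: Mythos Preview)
Your proposal is correct and follows essentially the same route as the paper: first integrals $H^\sigma=\Im(G^\sigma)$, the four closing equations, the affine involutions coming from the quadratic outer integrals, a degree drop from $n+1$ to $n$ by combining the two central equations, B\'ezout, and halving via the involution $(x_1,x_3)\mapsto(s^+-x_1,s^--x_3)$; parts (b) and (c) are likewise done by explicit numerical examples. If anything, your treatment of the degree drop is slightly more careful than the paper's: the paper simply subtracts the two central equations, whereas you correctly observe that the leading homogeneous forms are $(-1)^n$-proportional, so one must take $E_2-E_4$ for $n$ even and $E_2+E_4$ for $n$ odd to get the cancellation uniformly in $n$; you also flag the B\'ezout bookkeeping (components at infinity, the degenerate fixed point, single-line cycles) that the paper leaves implicit.
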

    
When the discontinuity region is a circle, it was shown in \cite{refb7} that the maximum 
number of limit cycles for linear centers is two. In \cite{refb5}, the case of two concentric 
circles was analyzed, where the authors considered limit cycles crossing four points in the 
discontinuity set- two on each circle- and concluded that the maximum number of limit cycles 
for linear centers separated by these circles is three. 

Motivated by these results, we investigate the maximum number of limit cycles than can arise 
in linear holomorphic center when the discontinuity is given by $\mathbb{S}_{1}$ and 
$\mathbb{S}_{2}$ or $\mathbb{S}_{1}$ and $\mathbb{S}_{3}$.

Let $(C2)$ and $(C3)$ be the classes of linear holomorphic centers separated, respectively,  by $\mathbb{S}_{1}$ and 
$\mathbb{S}_{2}$ and by $\mathbb{S}_{1}$ 
and $\mathbb{S}_{3}$. For these classes, we have the following theorem:

\begin{teoremaLetra}\label{teoe}
The following statements hold.
\begin{itemize}
    \item[a)] Every system in $(C2)$ or $(C3)$ has at most $3$ crossing limit cycles.
    \item[b)] There are system in $(C2)$ having $2$ crossing limit cycles.
    \item[c)] There are system in $(C3)$ having $2$ crossing limit cycles.
\end{itemize}
\end{teoremaLetra}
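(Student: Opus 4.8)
The plan is to reduce both classes to a single model on a strip by the Möbius transformation $M(z)=1/(z-1)$, which sends the common tangency point $z=1$ to $\infty$. A direct check shows that $M$ carries $\mathbb{S}_1$ to the line $\Re(w)=-1/2$, and $\mathbb{S}_2$ (resp. $\mathbb{S}_3$) to the parallel line $\Re(w)=1/2$ (resp. $\Re(w)=-3/2$), so that $(C2)$ and $(C3)$ both become three-zone systems separated by two parallel lines. Each regional center $\dot z=ib_\sigma(z-z_\sigma)$ is transformed into a quadratic holomorphic field with equilibria $w=0=M(\infty)$ and $w=B_\sigma:=M(z_\sigma)=1/(z_\sigma-1)$; since $M$ maps circles to circles, its orbits are the images of the concentric circles $|z-z_\sigma|=r$, i.e. the coaxial (Apollonius) family with limit points $0$ and $B_\sigma$, with first integral $H_\sigma(w)=|w|^2/|w-B_\sigma|^2$. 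In this model a crossing limit cycle is a closed curve built from orbital arcs of the three $H_\sigma$, matched along the two lines.

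The structural fact I would establish first is that, along these circular orbits, the half-return map on each line---sending one intersection of an orbit with the line to the other---is a Möbius involution in the coordinate $t=\Im(w)$: a short computation from $H_\sigma(-1/2+it)=\text{const}$ produces a trace-zero element of $PSL_2(\mathbb R)$. Writing $\alpha$ and $\gamma$ for the involutions induced by the outer regions on the two lines, the closure condition for a cycle meeting each line twice (four crossing points, as in the concentric case of \cite{refb5}) becomes the elimination of the interior line-parameter between the two central matching relations $\psi_2(r_1)=\psi_1(\alpha(s_1))$ and $\psi_2(\gamma(r_1))=\psi_1(s_1)$, where $\psi_1,\psi_2$ are the degree-two rational restrictions of $H_c$ to the two lines. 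This elimination yields a single polynomial equation in the starting parameter $s_1$, and counting its admissible real roots---those producing genuine Filippov crossings with each arc lying in its own region---gives the bound of three in part a). Cycles meeting a single line reduce, by the same computation, to the coincidence of the two involutions induced on that line, hence to a quadratic, and do not add beyond the global bound.

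For parts b) and c) I would exhibit explicit systems. Choosing the three centers so that all $B_\sigma$ are real (equivalently all $z_\sigma$ real) turns the involutions into genuine reflections $t\mapsto -t+\text{const}$ and renders $\psi_1,\psi_2$ even, so the closure polynomial can be written in closed form; tuning $z_+,z_c,z_-$ and the frequencies $b_\sigma$ then produces a polynomial with exactly two admissible roots, giving two crossing limit cycles for $(C2)$ and, using the line pair $\{-1/2,-3/2\}$, for $(C3)$. A transversality check on the corresponding fixed points of the return map guarantees that these are hyperbolic limit cycles.

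The hard part will be the sharp count in part a). The raw elimination produces a polynomial of degree as large as the Bézout bound for two bidegree-$(2,2)$ curves, that is, up to eight; the work is to strip away the spurious factors---the diagonal on which the two intersection points of an orbit with a line coincide, the tangential contacts, and the roots belonging to the wrong branch of the two-valued central transition (arcs that would leave their prescribed region)---so that the effective degree collapses to three. Verifying that precisely the geometrically admissible branch survives, uniformly over the parameters defining $(C2)$ and $(C3)$, is the delicate step on which the bound rests.
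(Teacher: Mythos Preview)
Your plan for parts b) and c)---pass to parallel lines via a Möbius map, build explicit examples, and verify two admissible solutions of the closing system---is essentially what the paper does. The paper simply writes down specific numerical centers (for $(C2)$: $z_0^+\approx 1.098-0.800\,i$, $z_0^c\approx 1+0.995\,i$, $z_0^-\approx 3.290+0.940\,i$; similarly for $(C3)$), transforms to the strip, computes the first integrals $H^\sigma$ there, and solves the resulting four-equation closing system numerically to exhibit exactly two limit cycles in each case.

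For part a), however, your proposal has a real gap and misses a much shorter route. The gap is precisely the step you yourself flag as ``the hard part'': you have not carried out the reduction of the elimination polynomial from the Bézout bound eight down to three, and nothing in your outline guarantees that this collapse happens uniformly over all parameter choices in $(C2)$ and $(C3)$. Identifying which factors are spurious and which survive is genuinely delicate, and without it you have no bound at all.

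The paper sidesteps this entirely by an argument that uses only \emph{one} circle and \emph{one} linear center---no Möbius straightening, no involutions, no tracking of the other two zones. Any planar linear center (after a time-rescaling) has first integral $H(x,y)=4(x+by)^2+8(cx-dy)+\omega^2y^2$, and each crossing limit cycle meets $\mathbb{S}_1$ at two points with $H(x_1,y_1)=H(x_2,y_2)$. On $\mathbb{S}_1$ one has $x_i^2=1-y_i^2$, which turns this equal-level condition into a relation that is \emph{linear} in the four quantities $4b^2+\omega^2-4$, $b$, $c$, $d$. Four such relations, coming from four hypothetical limit cycles, force $b=c=d=0$ and $\omega=2$, so the linear center adjacent to $\mathbb{S}_1$ would have to be the trivial rotation $\dot x=-y$, $\dot y=x$, contradicting the existence of isolated crossing orbits. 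This lemma (adapted from the concentric-circles argument in \cite{refb5}) yields the bound of three for both $(C2)$ and $(C3)$ at once, with no elimination and no degree bookkeeping.
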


The proof of Theorems E follows the same approach used in Theorems \ref{teob} and \ref{teoc}, 
where Möbius transformations are applied to simplify the discontinuity region and 
make the analysis simpler.

\textbf{Section 2} is devoted to the preliminaries needed for the development of the subsequent sections.
In \textbf{Section 3}, we provide conditions that guarantee the existence of limit cycles. 
In \textbf{Section 4}, we prove Theorems \ref{teoa}, \ref{teob} and \ref{teoc}, and finally, in \textbf{Section 5}, we prove 
Theorems \ref{teod} and \ref{teoe}.

\section{Preliminaries}
Here we present some preliminary results, which will be fundamental for the 
development of the following sections.

\subsection{Melnikov function} We will review some results on Melnikov functions for piecewise smooth systems. 
For more details, see \cite{refb9,refb10,refb30}. Following the steps of the results and adapting them to our case with different configurations of discontinuities, we can apply the ideas with the same structure of the arguments.

Consider a piecewise differential system of the form
\begin{equation}\label{eq1(s2)}
\dot{x}=\dfrac{H^{\sigma}_{y}(x,y)}{R^{\sigma}(x,y)}+\varepsilon f^{\sigma}(x,y),\quad \dot{y}=
-\dfrac{H^{\sigma}_{x}(x,y)}{R^{\sigma}(x,y)}+\varepsilon g^{\sigma}(x,y),
\end{equation}
where $\varepsilon $ is a small parameter, $f^{\sigma}, g^{\sigma}$ and $H^{\sigma}$, are analytic functions, and $\sigma=+$, if  $y \geq 1$; 
$\sigma=c$, if $-1 \leq y \leq 1$ and  $\sigma=-$, if $y \leq -1$.

Note that the straight lines $y=1$ and $y=-1$ divide  the plane into three regions.
Suppose that the unperturbed system (\ref{eq1(s2)}) (when $\epsilon =0$), satisfy the following  assumptions:\\

\textbf{Assumption 1.} There exists an open interval $I=(\alpha ,\beta )$, such that, for $h\in I$, 
exist four points $A^{-}(h)=(a^{-}(h),-1)$, $B^{-}(h)=(b^{-}(h),-1)$, $A^{+}(h)=(a^{+}(h),1)$ and 
$B^{+}(h)=(b^{+}(h),1)$ with $a^{+}(h)\neq a^{-}(h)$ and  $b^{+}(h)\neq b^{-}(h)$, $H^{-}(A^{-}(h))=
H^{-}(B^{-}(h))=h$, $H^{C}(A^{-}(h))=H^{C}(A^{+}(h))$, $H^{+}(A^{+}(h))=H^{+}(B^{+}(h))$ and $H^{C}(B^{+}(h))=H^{C}(B^{-}(h))$.\\

\textbf{Assumption 2.} The unperturbed system has an orbit $L_{h}^{-}$ that  starts at $A^{-}(h)$ 
and ends at $B^{-}(h)$ defined by $H^{-}(x,y)=h$ for $y<-1$. Additionally, the unperturbed system 
has an orbit $L_{h}^{c_{1}}$, that starts at $B^{-}(h)$ and ends  at $B^{+}(h)$, with its trajectory 
defined by $H^{C}(B^{-}(h))=H^{C}(B^{+}(h))$, $-1<y<1$. For $y\geq 1$, the unperturbed system 
has an orbit $L_{h}^{+}$ that starts at  $B^{+}(h)$  and ends at $B^{-}(h)$. Furthermore, in 
the region  $-1<y<1$ the unperturbed system has an orbit $L_{h}^{c_{2}}$  which starts 
at $A^{+}(h)$ and ends at $A^{-}(h)$.

\begin{figure}[!htb]
\centering
\includegraphics[scale=1.1]{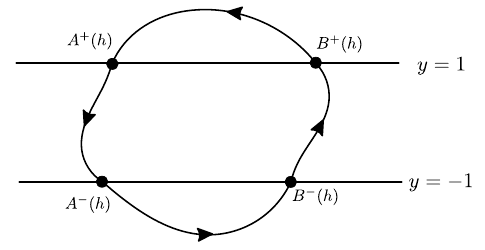}
\caption{The closed orbits of unperturbed system (\ref{eq1(s2)}).}
\label{img1(s2)}
\end{figure}

Under the Assumptions $1$ and $2$, the unperturbed system has a family of periodic 
orbits $L_{h}=L_{h}^{-}\cup L_{h}^{c_{1}}\cup L_{h}^{+}\cup L_{h}^{c_{2}}$ for $h\in J$.
\begin{teorema}\label{melnikov}
Consider system (\ref{eq1(s2)}) with $\varepsilon$ sufficiently small. Under the 
Assumptions $1$ and $2$, the first order Melnikov function can be expressed as
\begin{equation}\label{eq2}
\begin{aligned}
    M(h) &= \dfrac{H_{x}^{-}(A^{-})}{H_{x}^{c}(A^{-})} \int_{L_{h}^{c_{1}}} \left(R^{c}g^{c}
    dx - R^{c}f^{c}dy\right) \\
    &\quad + \dfrac{H_{x}^{-}(A^{-}) H_{x}^{c}(A^{+})}{H_{x}^{c}(A^{-}) H_{x}^{+}(A^{+})} 
    \int_{L_{h}^{+}} (R^{+}g^{+}dx - R^{+}f^{+}dy) \\
    &\quad + \dfrac{H_{x}^{-}(A^{-}) H_{x}^{c}(A^{+}) H_{x}^{+}(B^{+})}{H_{x}^{c}(A^{-}) 
    H_{x}^{+}(A^{+}) H_{x}^{c}(B^{+})} \int_{L_{h}^{c_{2}}} (R^{c}g^{c}dx - R^{c}f^{c}dy )\\
    &\quad + \dfrac{H_{x}^{-}(A^{-}) H_{x}^{c}(A^{+}) H_{x}^{+}(B^{+}) H_{x}^{c}(B^{-})}
    {H_{x}^{c}(A^{-}) H_{x}^{+}(A^{+}) H_{x}^{c}(B^{+}) H_{x}^{-}(B^{-})} \int_{L_{h}^{-}} 
   ( R^{-}g^{-}dx - R^{-}f^{-}dy).
\end{aligned}
\end{equation}

Furthermore, if $M(h)= 0$ and $M'(h)\neq0$ for some $h\in J$, then for  $\varepsilon$ sufficiently 
small, the system (\ref{eq1(s2)}) has a unique limit cycle bifurcating from $L_{h}$.
\end{teorema}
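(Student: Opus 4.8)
The plan is to construct the first-order displacement map of the perturbed flow \eqref{eq1(s2)} by concatenating the transition maps across the three zones, and then to identify $M(h)$ with the coefficient of $\varepsilon$ in that map. The starting observation is that, for $\varepsilon=0$, each $H^{\sigma}$ is a first integral of the corresponding field, since $\dot{H}^{\sigma}=H^{\sigma}_{x}\dot{x}+H^{\sigma}_{y}\dot{y}=0$ on the unperturbed orbits, whereas along the perturbed field one has $\dot{H}^{\sigma}=\varepsilon\,(H^{\sigma}_{x}f^{\sigma}+H^{\sigma}_{y}g^{\sigma})$. Using $dx=(H^{\sigma}_{y}/R^{\sigma})\,dt$ and $dy=-(H^{\sigma}_{x}/R^{\sigma})\,dt$ on the unperturbed orbit, the identity $R^{\sigma}(g^{\sigma}\,dx-f^{\sigma}\,dy)=(H^{\sigma}_{x}f^{\sigma}+H^{\sigma}_{y}g^{\sigma})\,dt$ holds, so the first-order variation of $H^{\sigma}$ accumulated along the arc $L_{h}^{\sigma}$ equals $\varepsilon\int_{L_{h}^{\sigma}}(R^{\sigma}g^{\sigma}\,dx-R^{\sigma}f^{\sigma}\,dy)+O(\varepsilon^{2})$. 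This is what produces the four line integrals in \eqref{eq2}.

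Next I would coordinatize each switching line $y=\pm 1$ by the signed displacement $u$ of the crossing point from the base point $A^{\pm}(h)$ or $B^{\pm}(h)$. Since $dy=0$ along the line, a displacement $u$ changes any first integral $H$ by $H_{x}u+O(u^{2})$; consequently the same geometric displacement is read as $H^{\sigma}_{x}u$ from the side of zone $\sigma$ and as $H^{\tau}_{x}u$ from the adjacent zone $\tau$, and crossing a switching point therefore rescales the displacement by the ratio $H^{\sigma}_{x}/H^{\tau}_{x}$ evaluated there. Combining this with the previous step, and using that by Assumption~1 the two endpoints of each arc lie on a common level set of the governing integral, the transition map associated with $L_{h}^{\sigma}$ has the linearization $H_{x}(P_{\mathrm{out}})\,u_{\mathrm{out}}=H_{x}(P_{\mathrm{in}})\,u_{\mathrm{in}}+\varepsilon\,I^{\sigma}+O(\varepsilon^{2},u^{2})$, where $I^{\sigma}$ denotes the corresponding line integral.

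Then I would compose the four transition maps along the cycle $A^{-}\to B^{-}\to B^{+}\to A^{+}\to A^{-}$ carried by $L_{h}^{-},L_{h}^{c_{1}},L_{h}^{+},L_{h}^{c_{2}}$, set the incoming displacement equal to zero, and collect the $O(\varepsilon)$ term of the return displacement measured through the value of $H^{-}$ at $A^{-}$. The normalization at $A^{-}$ yields the common factor $H^{-}_{x}(A^{-})/H^{c}_{x}(A^{-})$, while the successive crossings contribute the telescoping products of the ratios $H_{x}(P_{\mathrm{out}})/H_{x}(P_{\mathrm{in}})$; this is precisely the array of coefficients multiplying the four integrals in \eqref{eq2}, so the return displacement is $d(h,\varepsilon)=\varepsilon M(h)+O(\varepsilon^{2})$. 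Finally, to detect limit cycles I would divide by $\varepsilon$ and apply the implicit function theorem to $\widetilde{d}(h,\varepsilon):=d(h,\varepsilon)/\varepsilon$, which extends analytically with $\widetilde{d}(h,0)=M(h)$: if $M(h_{0})=0$ and $M'(h_{0})\neq 0$ then $\partial_{h}\widetilde{d}(h_{0},0)\neq 0$, giving for small $\varepsilon$ a unique simple zero $h(\varepsilon)$ near $h_{0}$, hence a unique crossing limit cycle bifurcating from $L_{h_{0}}$.

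The delicate point, and the main obstacle, is the bookkeeping of the conversion factors: at each of the four crossings one must keep track consistently of which integral governs the incoming and the outgoing arc, and of the orientation in which each arc is traversed, so that the telescoping product reproduces exactly the coefficients displayed in \eqref{eq2} rather than a permuted or reciprocal variant. In parallel one must verify that the $O(\varepsilon^{2})$ remainders in the transition maps are uniform in $h$ on compact subsets of $J$, which is what guarantees that the sign of $d(h,\varepsilon)$ is controlled by $M(h)$ in a neighborhood of a simple zero and legitimizes the final application of the implicit function theorem.
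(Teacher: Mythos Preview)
The paper does not actually prove Theorem~\ref{melnikov}: it is stated in the preliminaries as a result adapted from \cite{refb9,refb10,refb30}, with the remark that ``following the steps of the results and adapting them to our case with different configurations of discontinuities, we can apply the ideas with the same structure of the arguments.'' So there is no in-house proof to compare against; the theorem is imported.

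That said, your sketch is precisely the standard argument used in those references (in particular \cite{refb9} for the three-zone configuration). You have the two essential ingredients right: first, that along each arc the variation of the governing first integral $H^{\sigma}$ is $\varepsilon\int_{L_h^{\sigma}}(R^{\sigma}g^{\sigma}\,dx-R^{\sigma}f^{\sigma}\,dy)+O(\varepsilon^{2})$, which is exactly your computation $R^{\sigma}(g^{\sigma}\,dx-f^{\sigma}\,dy)=(H^{\sigma}_{x}f^{\sigma}+H^{\sigma}_{y}g^{\sigma})\,dt$; and second, that at each switching point a horizontal displacement $u$ translates between the two adjacent integrals via the ratio of their $x$-derivatives, producing the telescoping factors $H^{\sigma}_{x}/H^{\tau}_{x}$. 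Composing around the loop $A^{-}\to B^{-}\to B^{+}\to A^{+}\to A^{-}$ and normalizing at $A^{-}$ reproduces \eqref{eq2}, and the implicit function step is routine once $d(h,\varepsilon)=\varepsilon M(h)+O(\varepsilon^{2})$ is established. Your self-identified ``delicate point'' about orientation and which integral is incoming versus outgoing at each crossing is indeed where care is needed, but there is no missing idea: carrying out the bookkeeping explicitly, starting the return map at $A^{-}$ measured in $H^{-}$, gives exactly the displayed coefficients.
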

\refstepcounter{remark}
\label{obs.1} 
\noindent\textbf{Remark  \theremark.} 
    The assumptions $1$ and $2$ ensure the existence of a family of periodic orbits that cross 
    the lines $y=1$ and $y=-1$. However, it is also possible to have families of periodic orbits 
    that cross only one of these lines. To consider these case, we assume that:
    \begin{enumerate}
        \item If the periodic orbits cross only $y=1$, 
        \begin{enumerate}
            \item[-]  There exists an open interval $I_{1}=(\alpha_{1},\beta_{1})$ such that, for each 
            $h\in I_{1}$, there exist two points $A_{1}=(a_{1},1)$ and $B_{1}=(b_{1},1)$, 
            with $a_{1}\neq b_{1}$, satisfying $H^{+}(A_{1})=H^{+}(B_{1})=h$ and 
            $H^{c}(A_{1})=H^{c}(B_{1})$.
            \item[-] The unperturbed system has an orbit $M_{h}^{+}$ that starts at $A_{1}$ 
            end ends at $B_{1}$, defined by $H^{+}(x,y)=h$ for $y\geq 1$, and an orbit $M_{h}^{c}$ 
            that starts at $B_{1}$ and ends at $A_{1}$, defined by $H^{c}(x,y)=H^{c}(A_{1})$ 
            for $y<1$. 
        \end{enumerate}
        In this case, the Melnikov function takes the form
      \[
            M_{1}(h)=\frac{H_x^+(A_1)}{H_x^c(A_1)}\int_{M_h^c}(R^cg^cdx-
            R^cf^cdy)+\frac{H_x^+(A_1)H_x^c(B_1)}{H_x^c(A_1)H_x^{+}
            (B_1)}\int_{M_h^+}(R^-g^+dx-R^-f^+dy).
   \]
        \item If the periodic orbits cross only $y=-1$, \begin{enumerate}
            \item[-]  There exists an open interval $I_{2}=(\alpha_{2},\beta_{2})$ such that, for each 
            $h\in I_{1}$, there exist two points $A_{2}=(a_{2},-1)$ and $B_{2}=(b_{2},-1)$, 
            with $a_{2}\neq b_{2}$, satisfying $H^{c}(A_{2})=H^{c}(B_{2})=h$ and 
            $H^{-}(A_{2})=H^{-}(B_{2})$.
            \item[-] The unperturbed system has an orbit $  M_{h}^{c}$ that starts at $A_{2}$ end 
            ends at $B_{2}$, defined by $H^{c}(x,y)=h$ for $y\geq -1$, and an orbit $M_{h}^{-}$ 
            that starts at $B_{2}$ and ends at $A_{1}$, defined by $H^{-}(x,y)=H^{-}(A_{2})$ for $y<-1$. 
        \end{enumerate}
        In this case, the Melnikov function takes the form
      \[
            M_{2}(h)=\frac{H_{x}^{c}(A_{2})}{H_{x}^{-}(A_{2})}\int_{M_{h}^-} (R^-g^-dx-R^-f^-dy)+
            \frac{H_{x}^{c}(A_{2})H_{x}^{-}(B_{2})}{H_{x}^{-}(A_{2})H_{x}^{c}(B_{2})}
            \int_{M_{h}^{c}} (R^{c}g^{c}dx-R^{c}f^{c}dy).
     \]
    \end{enumerate}
Furthermore, if $ M_1(h) = 0 $ and $ M_1'(h) \neq 0 $ for some $ h \in I_{1} $, then for 
$ \varepsilon $ sufficiently small, the system (\ref{eq1(s2)}) has a unique limit cycle 
bifurcating from $ M_{h} $, where $ M_h = M_{h}^{+} \cup M_{h}^{c} $.  

Similarly, if $ M_2(h) = 0 $ and  $M_2'(h) \neq 0$ for some $h \in I_2$, then for $\varepsilon $ 
sufficiently small, the system (\ref{eq1(s2)}) has a unique limit cycle bifurcating from  
$M_{h} $, where $M_{h} = M_{h}^{c} \cup M_{h}^{-}$.

Following, we present an auxiliary result that will be fundamental for estimating the 
number of simple zeros of the Melnikov functions. For more details see \cite{refb6}.
\begin{lema}\label{lem1}
Consider $n+1$ linearly independent functions $f_{i}:U\subset \mathbb{R}\rightarrow \mathbb{R}$, 
$i=0,\cdots n$, then:
\begin{enumerate}
\item[a)] Given $n+1$ arbitrary values $x_{i}\in U$, $i=0,\cdots ,n$ there exist $n+1$ constants 
$C_{i}$, $i=0,\cdots p$ such that
\begin{eqnarray}\label{ar.eq1}
f(x):=\sum_{i=0}^{n}C_{i}f_{i}
\end{eqnarray}
is not the zero function and $f(x_{i})=0$, $i=0,\cdots ,p$;
\item[b) ] Furthermore, if all $f_{i}$ are analytic functions on $U$ and there exists $0\leq j\leq n$
 such that $f_{i} \big|_U$ has a constant sign, it is possible to get an $f$ by (\ref{ar.eq1}), 
 such that it has at least $n$ simple zero in $U$.
\end{enumerate}
\end{lema}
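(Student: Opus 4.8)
The plan is to treat the two parts separately: part (a) is elementary linear algebra, while part (b) needs the analyticity hypothesis and a perturbation/genericity argument. Reading (a) with its natural count of conditions (one fewer than the number of coefficients), I would form the homogeneous linear system $\sum_{i=0}^{n} C_i f_i(x_j)=0$ in the unknowns $C_0,\dots,C_n$, with one equation for each prescribed abscissa $x_j$. Since the number of prescribed points is strictly smaller than the number $n+1$ of unknowns, this underdetermined system has a nontrivial solution $(C_0,\dots,C_n)\neq 0$. The linear independence of $f_0,\dots,f_n$ then forces $f=\sum_i C_i f_i$ to be a nonzero function, because a vanishing nontrivial combination would contradict independence; this settles (a), and no analyticity is used.

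For part (b) the first step is to exploit the constant-sign hypothesis to normalize the problem. Assuming $f_j$ has constant sign on $U$, hence never vanishes, I would divide through by $f_j$ and replace $\{f_i\}$ by $\{1\}\cup\{f_i/f_j : i\neq j\}$. This family is again linearly independent and analytic, one member is the constant $1$, and the zeros of any combination $\sum_i C_i f_i$ coincide with those of the corresponding combination of the normalized family \emph{with the same order} (if $f/f_j$ has a zero of order $r$ at $x_0$, then $f=f_j\cdot(f/f_j)$ has a zero of order exactly $r$ there, since $f_j(x_0)\neq 0$). Thus it suffices to produce a combination $\Phi=C_0+\sum_{k=1}^{n}C_k g_k$ of $1,g_1,\dots,g_n$ with $n$ simple zeros. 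To build $\Phi$ I would use an interpolation determinant: for nodes $t_1<\cdots<t_n$ in $U$, let $\Phi(x)$ be the determinant of the $(n+1)\times(n+1)$ matrix whose first row is $(1,g_1(x),\dots,g_n(x))$ and whose remaining rows are the evaluations at the $t_j$. Expanding along the first row exhibits $\Phi$ as a genuine combination vanishing at every $t_j$, whose coefficients are not all zero precisely when the evaluation matrix has rank $n$; by linear independence this holds for a suitable choice of nodes, so $\Phi\not\equiv 0$ has at least $n$ zeros.

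The main obstacle is guaranteeing that these $n$ zeros are \emph{simple} rather than multiple. My plan is a genericity argument in the node configuration. The condition that $t_m$ fail to be a simple zero is $\Phi'(t_m)=0$; differentiating the identity $\Phi(t_m)=0$ with respect to the node $t_m$ shows this is equivalent to the vanishing of an interpolation determinant in which the $m$-th evaluation row is replaced by the derivative row $(0,g_1'(t_m),\dots,g_n'(t_m))$. This is a real-analytic condition on $(t_1,\dots,t_n)$ over the open set of admissible (full-rank, ordered) configurations. I would argue that it cannot hold identically: letting a single node vary while fixing the others, an identically vanishing such determinant would force a nontrivial analytic relation among $1,g_1,\dots,g_n$ and their derivatives on an interval, and hence (by the identity theorem) everywhere, contradicting linear independence. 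Consequently each bad locus is a proper analytic subset, their finite union has empty interior, and any configuration in the complement produces a combination with $n$ simple zeros in $U$.

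The delicate point—where the analyticity of (b), absent in (a), is indispensable—is precisely the verification that these bad loci are proper, i.e.\ that independence plus analyticity genuinely preclude a forced tangency at every admissible configuration. Once this is established, undoing the normalization by multiplying back by $f_j$ (which preserves the location and order of zeros) returns a combination $\sum_i C_i f_i$ of the original functions with at least $n$ simple zeros in $U$, completing (b).
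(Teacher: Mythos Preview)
The paper does not prove this lemma; it is stated with the pointer ``For more details see \cite{refb6}'' and no argument is supplied. There is therefore no in-paper proof to compare your proposal against.

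On the substance of your attempt: part (a) is the standard linear-algebra argument and is correct once the typos are read so that the number of prescribed abscissae is at most $n$ (one fewer than the $n+1$ free coefficients). For part (b) your interpolation-determinant strategy is reasonable, but the genericity step is not closed as written. After you differentiate and set $x=t_m$, the resulting determinant still carries an evaluation row at $t_m$ (the former $x$-row), so the cofactors multiplying each $g_k'(t_m)$ themselves depend on $t_m$; you are therefore not facing a \emph{fixed} linear relation among $g_1',\dots,g_n'$ that linear independence of $1,g_1,\dots,g_n$ would kill outright. Your sentence ``an identically vanishing such determinant would \dots\ contradict linear independence'' needs an additional reduction---for instance, collapsing nodes successively until one reaches the Wronskian $W(1,g_1,\dots,g_n)$, which for analytic linearly independent functions is indeed not identically zero---and that reduction is absent from your outline. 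The route taken in the cited reference is different and shorter: once (a) yields a nontrivial analytic $f=\sum_i C_i f_i$ vanishing at $n$ chosen points, one perturbs by a small multiple of the sign-definite $f_j$; since $f_j$ never vanishes, the sign changes of $f$ persist and any multiple zero can be split, so after finitely many such adjustments one obtains at least $n$ simple zeros without ever varying the node configuration.
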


Let $f_{0},\cdots ,f_{n}$ functions, which are differentiable on an interval $I$. Determine if 
$f_{0},\cdots ,f_{n}$ are linearly independent is not a trivial task. A useful tool to determine if 
$f_{0},\cdots ,f_{n}$ are linearly independent is the Wronskian, which is defined by
\begin{eqnarray*}
W(f_0, f_1, \ldots, f_{n})(x)=\hbox{det}(M(f_{0},\cdots ,f_{n-1})(x)),
\end{eqnarray*}
where
\begin{eqnarray*}
M(f_{0},\cdots ,f_{n})(x)= \begin{bmatrix}
f_0(x) & f_1(x) & \cdots & f_{n-1}(x) \\
f_0'(x) & f_1'(x) & \cdots & f_{n-1}'(x) \\
\vdots & \vdots & \ddots & \vdots \\
f_0^{(n)}(x) & f_1^{(n)}(x) & \cdots & f_{n}^{(n)}(x)
\end{bmatrix}.
\end{eqnarray*}

If $W(f_0, f_1, \ldots, f_{n})(x)\neq 0$ for some $x\in I$, then $f_{0},\cdots ,f_{n}$ are 
linearly independent. 

\subsection{Möbius transformations and PWHS}

To study the dynamics of systems separated by tangent circles, we will use Möbius transformations, 
which provide a powerful tool for analyzing the system's structure and behavior. 
A Möbius transformation is a rational function 
$S:\mathbb{C}\setminus\{-\frac{d}{c}\} \to \mathbb{C}\setminus \{\frac{a}{c}\}$ of the form
\begin{eqnarray*}
    S(z)=\dfrac{az+b}{cz+d},
\end{eqnarray*}
where the coefficients $a,b,c$ and $d$ are complex numbers that satisfy $ad-bc\neq 0$. 
The condition $ad-bc\neq 0$ ensures that the function $S$ is not constant.

Möbius transformations satisfy important properties. For example, they are invertible and 
can be expressed as a composition of simpler transformations (translation, 
rotation, homothety and inversion). Moreover, one of their most remarkable 
properties is that the image of a line or circle remains a line or a circle.
Due to these properties, Möbius transformations play a fundamental role in various areas 
of mathematics and, in general, in science. This role is particularly relevant because these 
transformations allow complex problems to be rewritten into simpler equivalent forms, 
facilitating their resolution in different mathematical and applied contexts. An important 
example of how Möbius transformations simplify complex problems can be observed 
in the context of the Dirichlet problem. The main relation between the Dirichlet problem 
and Möbius transformations is that they can be used to map the Dirichlet problem from 
one domain to another (\cite{refb8}). This property enhances the ability to solve 
Dirichlet problems in domains with complex geometry.

To apply Möbius transformations to the study of piecewise holomorphic systems 
separated by two tangent circles, we will perform a change of coordinates that transforms 
the discontinuity regions into simpler domains. This approach allows us to rewrite the 
system in new reference frame, making the analysis of trajectories and dynamic properties more accessible.

\begin{figure}[!htb]
\centering
\includegraphics[scale=0.80]{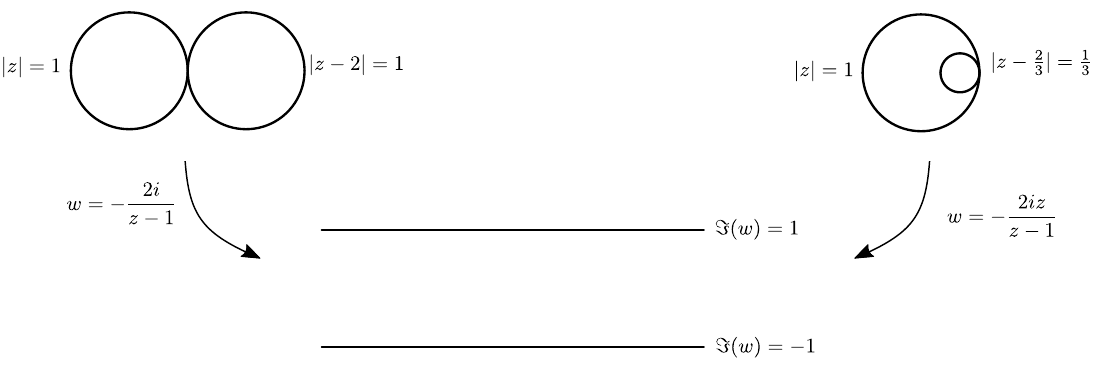}
\caption{Möbius transformations mapping tangent circles to parallel lines, 
simplifying the discontinuity regions.}
\label{img.1}
\end{figure}

In the case of externally tangent circles, the transformation $w=-\frac{2i}{z-1}$ maps 
the circle $|z|=1$ to the line $\Im (w)=1$ and its interior $|z|<1$ to the strip $\Im (w)>1$. 
Additionally, the circle $|z-2|<1$ is transformed into the strip $\Im (w)<-1$. 
The exterior region of the circles $|z|=1$ and $|z-2|=1$ is mapped to the strip $|\Im (w)|<1$.

For the case of internally tangent circles, we use the transformation $w=-\frac{2iz}{z-1}$, 
which maps the circle $|z-\frac{2}{3}|=\frac{1}{3}$ to the line $\Im (w)=1$ and its interior 
$|z-\frac{2}{3}|<\frac{1}{3}$ to the half-plane $\Im (w)>1$. The circle $|z|=1$ is mapped to 
the line $\Im (w)=-1$. Furthermore, the region $|z|<1$ and $|z-\frac{2}{3}|>\frac{1}{3}$ 
is transformed into the strip $|\Im (w)|<1$, while the region $|z|>1$ corresponds to $\Im (w)<-1$.

Thus in both, cases, we transform a system initially separated  by a more complex 
region into an equivalent system where the discontinuity regions assume simpler forms.\\

Consider the system 
\begin{equation}\label{mobiuseq1}\dot{z}=f^{\sigma}(z) \quad \hbox{if} \quad z\in\Sigma_{\sigma},\quad \sigma=+,c,-.
\end{equation}

\begin{pro}
    Let $w=\phi(z)=\frac{az+b}{cz+d}$ be a Möbius transformation. Then the system (\ref{mobiuseq1}) is transformed into
\begin{equation}
\dot{w}=-\dfrac{(cw-a)^{2}}{(bc-ad)} f^{\sigma}\left(\dfrac{b-wd}{cw-a} \right) \quad \hbox{if} \quad w\in\phi (\Sigma_{\sigma}),\quad \sigma=+,c,-.
\end{equation}    
    \end{pro}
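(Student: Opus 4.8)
The plan is to work directly from the change-of-variables formula for holomorphic vector fields under a conformal map. Writing the system as $\dot z = f^\sigma(z)$ and letting $w = \phi(z)$ with $\phi(z) = \frac{az+b}{cz+d}$, I would differentiate $w$ along trajectories via the chain rule: $\dot w = \phi'(z)\,\dot z = \phi'(z)\,f^\sigma(z)$. The whole proof reduces to expressing the right-hand side entirely in terms of $w$, using the inverse Möbius map. Since $w\in\phi(\Sigma_\sigma)$ precisely when $z\in\Sigma_\sigma$, the region bookkeeping is automatic and the piecewise structure is preserved; the only real content is the algebra on the principal branch of each piece.

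First I would compute the derivative $\phi'(z)$. A direct quotient-rule computation gives
\begin{equation*}
\phi'(z) = \frac{a(cz+d) - c(az+b)}{(cz+d)^2} = \frac{ad-bc}{(cz+d)^2}.
\end{equation*}
Next I would invert the Möbius transformation. Solving $w(cz+d) = az+b$ for $z$ yields $z(cw-a) = b - dw$, hence
\begin{equation*}
z = \phi^{-1}(w) = \frac{b-dw}{cw-a}.
\end{equation*}
The remaining step is to rewrite $cz+d$ in terms of $w$ so that $\phi'(z)$ becomes a rational function of $w$. Substituting the expression for $z$ gives $cz+d = \frac{c(b-dw) + d(cw-a)}{cw-a} = \frac{cb-ad}{cw-a} = \frac{bc-ad}{cw-a}$, so
\begin{equation*}
(cz+d)^2 = \frac{(bc-ad)^2}{(cw-a)^2}.
\end{equation*}

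Combining these pieces, $\phi'(z) = \dfrac{ad-bc}{(cz+d)^2} = (ad-bc)\cdot\dfrac{(cw-a)^2}{(bc-ad)^2} = -\dfrac{(cw-a)^2}{bc-ad}$, using $ad-bc = -(bc-ad)$. Therefore
\begin{equation*}
\dot w = \phi'(z)\,f^\sigma(z) = -\frac{(cw-a)^2}{bc-ad}\,f^\sigma\!\left(\frac{b-dw}{cw-a}\right),
\end{equation*}
which is exactly the claimed transformed system, valid on $w\in\phi(\Sigma_\sigma)$. I do not anticipate a genuine obstacle here: the argument is a routine application of the chain rule together with the explicit inverse of a Möbius map, and the condition $ad-bc\neq 0$ guarantees that every denominator appearing is nonzero. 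The one point requiring mild care is the sign: one must track that $\phi'$ carries a factor $ad-bc$ while the inverse substitution produces $(bc-ad)^2$ in the denominator, so that after simplification a single net factor of $-(bc-ad)$ survives, matching the stated form. I would close by remarking that, because $\phi$ is a biholomorphism off its pole, it maps the trajectories of the original system bijectively onto those of the transformed system and, by the earlier discussion of Möbius images of lines and circles, sends the discontinuity set to the corresponding configuration of lines or circles, so the two piecewise holomorphic systems are dynamically equivalent.
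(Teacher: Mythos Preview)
Your proof is correct and follows essentially the same approach as the paper: compute $\phi'(z)=(ad-bc)/(cz+d)^2$, invert the M\"obius map to get $z=(b-dw)/(cw-a)$, and substitute to express $\dot w=\phi'(z)f^\sigma(z)$ purely in terms of $w$. You actually supply a bit more detail than the paper (the explicit evaluation of $cz+d$ in terms of $w$ and the sign bookkeeping), but the argument is the same chain-rule computation.
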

    
\begin{proof}
  Note that
  \begin{eqnarray*}
      \phi^{-1}(w)=\dfrac{b-cw}{cw-a},\quad \phi'(z)=\dfrac{ad-cb}{(cz+d)} \quad \hbox{and}\quad \phi'(\phi^{-1}(w))=-\dfrac{(cw-a)^{2}}{cb-ad},
  \end{eqnarray*}
  then
  \begin{eqnarray*}
      \dot{w}=\phi'(\phi^{-1}(w))f^{\sigma }(\phi^{-1}(w))=-\dfrac{(cw-a)^{2}}{cb-ad}f^{\sigma }\left(\dfrac{b-cw}{cw-a} \right).
  \end{eqnarray*}
\end{proof}

To study piecewise holomorphic systems separated by two tangent circles, we will use two 
Mobius transformations and their respective inverses.

\section{Initial examples: Existence of limit cycles for normal forms of PWHS}

In this section, we will find conditions that ensure the existence of limit for piecewise holomorphic systems.

\begin{pro}\label{lc1}
Let $a,b,c,d,x_{0}$ and $x_{1}$ be real numbers such that  $a,c<0$ e $b,d>0$, $x_{0},x_{1}>0$. 
 Then the following  system
\begin{eqnarray}\label{sistema1}
\begin{cases}
\dot{z}=(a+ib)(z-i+x_{1}), \quad \hbox{if} \quad z\in  \Sigma_{+},\\
\dot{z}=iz, \quad \quad \hbox{if} \quad z\in \Sigma_{c},\\
\dot{z}=(c+id)(z+i-x_{0}), \quad \hbox{if}\quad z\in  \Sigma_{-},
\end{cases}
\end{eqnarray}
 has a limit cycle.
\end{pro}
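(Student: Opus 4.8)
The plan is to construct the crossing limit cycle explicitly by composing the three one-sided flows into a single Poincaré return map on one of the separation lines, exploiting the fact that each zone is linear (hence explicitly integrable). First I would describe the local dynamics. In the central strip $\Sigma_c$ the equation $\dot z = iz$ is the harmonic center: its orbits are the circles $|z| = r$ traversed counterclockwise, so an orbit of radius $r>1$ reaches both lines $\Im z = \pm 1$. In $\Sigma_+$ the field $\dot z = (a+ib)(z - i + x_1)$ has its unique equilibrium at $z_+ = -x_1 + i$, which lies \emph{exactly} on the line $\Im z = 1$; since $a<0$ and $b>0$ it is a stable focus spiralling counterclockwise. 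Likewise, in $\Sigma_-$ the field $\dot z = (c+id)(z + i - x_0)$ has a stable focus at $z_- = x_0 - i$ sitting exactly on $\Im z = -1$. The fact that both foci lie on the discontinuity lines is the structural feature that makes the analysis closed-form.

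Next I would compute the transition (half-)maps in terms of the abscissa $x$ along $\Im z = \pm 1$. For the central arcs, the circle $|z| = \sqrt{x^2+1}$ meets $\Im z = -1$ and $\Im z = +1$ at the same $x$, so the right arc sends $(x,-1)\mapsto(x,+1)$ and the left arc sends $(x,+1)\mapsto(x,-1)$, both preserving $x$. For the outer regions, writing $w = z - z_+$, the line $\Im z = 1$ corresponds to $\arg w \in \{0,\pi\}$; an orbit entering at $\arg w = 0$ (right of $z_+$) leaves at $\arg w = \pi$ (left of $z_+$) after exactly a half turn, so it spends time $\pi/b$ and its distance to $z_+$ is scaled by $\lambda_+ := e^{a\pi/b}\in(0,1)$. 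This yields the affine half-map $x \mapsto -x_1 - \lambda_+(x + x_1)$ on $\Im z = 1$, and analogously $x \mapsto x_0 + \lambda_-(x_0 - x)$ on $\Im z = -1$ with $\lambda_- := e^{c\pi/d}\in(0,1)$. Along the way I would check that each crossing is transversal (the vertical component of both one-sided fields has the same sign), so that the orbit genuinely crosses and no sliding occurs.

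Composing the right arc, the $\Sigma_+$ half-map, the left arc, and the $\Sigma_-$ half-map produces a single return map on $\{(x,-1): x>0\}$ of the affine form
\[
\mathcal{P}(x) = \lambda_+\lambda_-\, x + \beta, \qquad \beta = x_0(1+\lambda_-) + \lambda_-(1+\lambda_+)x_1 > 0.
\]
Since $\lambda_+\lambda_- = e^{(a/b + c/d)\pi}\in(0,1)$ and $\beta>0$, the map $\mathcal{P}$ is a contraction with the unique fixed point $x^* = \beta/(1-\lambda_+\lambda_-) > 0$. This fixed point corresponds to an isolated crossing periodic orbit, hence a limit cycle. I would finish by verifying that the intermediate points lie in the correct open zones (in particular $x^* > x_0 > 0$, and the $\Sigma_+$ exit abscissa is $< -x_1 < 0$), so that the orbit really visits $\Sigma_c,\Sigma_+,\Sigma_c,\Sigma_-$ in turn.

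The main obstacle is the bookkeeping that turns ``focus on the line'' into ``exactly a half turn,'' together with the transversality checks needed to guarantee a crossing (rather than sliding) closed orbit; once these are in place, the affine contraction does the rest, giving not only existence but uniqueness of the limit cycle in this configuration.
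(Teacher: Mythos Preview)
Your approach is essentially the same as the paper's: both build the Poincaré return map on a discontinuity line by explicitly integrating each linear piece, use that the foci sit exactly on $\Im z=\pm1$ so that each outer passage is a half-turn scaling by $e^{a\pi/b}$ or $e^{c\pi/d}$, and that the central circles preserve the abscissa, obtaining an affine map with slope $e^{(a/b+c/d)\pi}<1$ and hence a unique fixed point. The only differences are cosmetic---you start at the positive-$x$ crossing on $\Im z=-1$ while the paper starts at the negative-$x$ one, and you spell out the transversality and domain checks ($x^*>x_0$, exit abscissa $<-x_1$) that the paper leaves implicit.
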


\begin{proof}
    Let $w_{0}=s-i\in  \Sigma_{2}=\lbrace z\in \mathbb{C}; \Im(z)=-1\rbrace $, then we have
\begin{eqnarray*}
    z^{-}(t)&=&(w_{0}+i-x_{0})e^{(c+id)t}-(i-x_{0})=(s-x_{0})e^{ct}(\cos dt+i\sin dt )-(i-x_{0}),
\end{eqnarray*}
is a solution of $\dot{z}=(c+id)(z+i-x_{0})$ with the initial condition  $z^{-}(0)=w_{0}$. Furthermore, we obtain
\begin{eqnarray*}
    z^{-}\left(\frac{\pi }{d}\right)&=&(s-x_{0})e^{\frac{c\pi }{d}}(\cos \pi +i\sin \pi )-(i-x_{0})
								=-(s-x_{0})e^{\frac{c\pi }{d}}+x_{0}-i.
\end{eqnarray*}

Let $z(t)$ be a solution of $\dot{z}=iz$ with the initial condition $z(0)=-(s-x_{0})e^{\frac{c\pi }{d}}+x_{0}-i$. 
By the symmetry of the solutions, which are circles centered at the origin, there exists $t_{0}>0$ 
such that $z(t_{0})=-(s-x_{0})e^{\frac{c\pi }{d}}+x_{0}+i$.

Note that 
\begin{eqnarray*}
z^{+}(t)&=&(-(s-x_{0})e^{\frac{c\pi }{d}}+x_{0}+i-i+x_{1})e^{(a+ib)t}-(-i+x_{1})\\
		&=&(-(s-x_{0})e^{\frac{c\pi }{d}}+x_{0}+x_{1})e^{at}(\cos bt +i\sin bt )-x_{1}+i,
\end{eqnarray*}
is a solution of $\dot{z}=(a+ib)(z-i+x_{1})$, satisfying $z^{+}(0)=-(s-x_{0})e^{\frac{c\pi }{d}}+x_{0}+i$, 
ant thus we obtain

\begin{eqnarray*}
z^{+}\left(\frac{\pi }{b}\right) =((s-x_{0})e^{\frac{c\pi }{d}}-x_{0}-x_{1})e^{\frac{a\pi }{b}}-x_{1}+i.
\end{eqnarray*}

Now, consider $\tilde{z}(t)$, a solution of the equation $\dot{z}=iz$ with the initial condition 
$\tilde{z}(0)=((s-x_{0})e^{\frac{c\pi }{d}}-x_{0}-x_{1})e^{\frac{a\pi }{b}}-x_{1}+i$. 
Again, by the symmetry of the solutions of $\dot{z}=iz$, there exists $\tilde{t_{0}}$ such that $z(\tilde{t_{0}})=((s-x_{0})e^{\frac{c\pi }{d}}-x_{0}-x_{1})e^{\frac{a\pi }{b}}-x_{1}-i$.

Thus, the Poincaré map in a neighborhood of $w_{0}=s-i$ is given by
\begin{eqnarray*}
\Pi (z)=((s-x_{0})e^{\frac{c\pi }{d}}-x_{0}-x_{1})e^{\frac{a\pi }{b}}-x_{1}-i.
\end{eqnarray*}

Finally, we seek the solutions of the equation $\Pi(w_{0})=w_{0}$. 
The number of solutions of this equation corresponds to number of limit cycles of the systems. 
Since the equation $\Pi(w_{0})=w_{0}$ has a unique solution given by 
$\dfrac{-x_{0}e^{\frac{c\pi }{d}+\frac{a\pi }{b}} - (x_{0} + x_{1})e^{\frac{a\pi }{b}} - x_{1}}{1 - e^{\frac{c\pi }{d}+
\frac{a\pi }{b}}}-i$, then the system has  a unique limit cycle.
\end{proof}

Applying the change $w=\frac{z-2i}{z}$ and $w=\frac{z}{z+2i}$ to system (\ref{sistema1}), 
we obtain the following result:

\begin{cor}
    Under the hypotheses of proposition (\ref{lc1}), we have that the systems
\begin{eqnarray}\label{sistema1.1}
\begin{cases}
\dot{w}=\left(\frac{b-ai}{2}\right)(w-1)(-2i+(w-1)(-i+x_{1})), \quad \hbox{if}\quad w\in\Sigma_{+}^{E},\\
\dot{w}=-i(w-1), \quad \hbox{if}\quad w\in \Sigma_{c}^{E},\\
\dot{w}=\left(\frac{d-ci}{2}\right)(w-1)(-2i+(w-1)(i-x_{0}) ), \quad \hbox{if}\quad w\in \Sigma_{-}^{E},
\end{cases}
\end{eqnarray}
and
\begin{eqnarray}\label{sistema1.2}
\begin{cases}
\dot{w}=\left(\frac{b-ai}{2}\right)(w-1)(-2iw+(w-1)(-i+x_{1})), \quad \hbox{if} \quad w\in \Sigma_{+}^{I},\\
\dot{w}=-iw(w-1), \quad \hbox{if}\quad w\in \Sigma_{c}^{I},\\
\dot{w}=\left(\frac{d-ci}{2}\right)(w-1)(-2iw+(w-1)(i-x_{0}) ), \quad \hbox{if}\quad w\in \Sigma_{-}^{I},
\end{cases}
\end{eqnarray}
  has a limit cycle.  
\end{cor}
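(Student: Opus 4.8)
The plan is to transport the unique limit cycle produced by Proposition \ref{lc1} through the two Möbius maps $\phi_1(z)=\frac{z-2i}{z}$ and $\phi_2(z)=\frac{z}{z+2i}$, using the transformation formula of the preceding Proposition. The guiding principle is that a Möbius map is a biholomorphism of $\overline{\mathbb C}$, so away from its pole it is a diffeomorphism of the plane whose pushforward of $\dot z=f^\sigma(z)$ is exactly $\dot w=\phi'(\phi^{-1}(w))\,f^\sigma(\phi^{-1}(w))$; being a genuine coordinate change, it carries orbits to orbits, periodic orbits to periodic orbits, and preserves their isolation.

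First I would match the transformed fields with \eqref{sistema1.1} and \eqref{sistema1.2}. For $\phi_1$ one has $a=1,\ b=-2i,\ c=1,\ d=0$, so $bc-ad=-2i$ and $\phi_1^{-1}(w)=\frac{-2i}{w-1}$; for $\phi_2$ one has $a=1,\ b=0,\ c=1,\ d=2i$, so again $bc-ad=-2i$ and $\phi_2^{-1}(w)=\frac{-2iw}{w-1}$. Substituting into $\dot w=-\frac{(cw-a)^2}{bc-ad}f^\sigma(\phi^{-1}(w))$ and using $\frac{a+ib}{2i}=\frac{b-ai}{2}$, a direct computation turns each branch of \eqref{sistema1} into the corresponding branch of \eqref{sistema1.1} (for $\phi_1$) and \eqref{sistema1.2} (for $\phi_2$); for instance the central branch $f^c(z)=iz$ gives $\dot w=\frac{(w-1)^2}{2i}\cdot i\cdot\frac{-2i}{w-1}=-i(w-1)$ and, respectively, $\dot w=-iw(w-1)$, matching the middle lines of the two systems. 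This is routine algebra, so I would spell out only one branch. I would then record the region correspondence by a direct check of the same type as in Section 2: $\phi_1$ sends the lines $\Im(z)=\pm1$ onto the externally tangent circles $\mathbb S_1,\mathbb S_2$ and the regions $\Sigma_+,\Sigma_c,\Sigma_-$ onto $\Sigma_+^E,\Sigma_c^E,\Sigma_-^E$, while $\phi_2$ sends the same lines onto the internally tangent circles $\mathbb S_1,\mathbb S_3$ and the regions onto $\Sigma_+^I,\Sigma_c^I,\Sigma_-^I$. Hence the two pushforwards are genuinely the piecewise systems \eqref{sistema1.1} and \eqref{sistema1.2}.

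Finally I would conclude that the limit cycle survives. Since $\phi_j$ maps the discontinuity lines onto the discontinuity circles, it sends transversal crossing points to transversal crossing points and conjugates the crossing return map of \eqref{sistema1} to that of the transformed system, $\Pi_w=\phi_j\circ\Pi_z\circ\phi_j^{-1}$; fixed points therefore correspond, and the isolated crossing periodic orbit of Proposition \ref{lc1} is carried to an isolated crossing periodic orbit, that is, to a limit cycle. The one point demanding care, and the main obstacle, is to guarantee that the cycle avoids the pole of each map, so that its image is a compact cycle in the finite $w$-plane rather than one passing through $\infty$. For $\phi_1$ the pole is $z=0\in\Sigma_c$, which is not on the cycle, since in $\Sigma_c$ the cycle runs along arcs of the circles $|z|=\mathrm{const}>0$ centred at the origin. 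For $\phi_2$ the pole is $z=-2i\in\Sigma_-$; passing through it forces both $\Re(z^-(t))=0$ and $\Im(z^-(t))=-2$ at the same instant along the explicit lower arc $z^-(t)=(s-x_0)e^{ct}(\cos dt+i\sin dt)-(i-x_0)$, a non-generic condition which one checks fails for the cycle at hand. With the poles off the cycle, each $\phi_j$ restricts to a diffeomorphism on a neighbourhood of the (compact) cycle, so its image is a genuine limit cycle of \eqref{sistema1.1}, respectively \eqref{sistema1.2}.
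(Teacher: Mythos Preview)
Your proposal is correct and follows exactly the paper's approach: the paper simply states ``Applying the change $w=\frac{z-2i}{z}$ and $w=\frac{z}{z+2i}$ to system (\ref{sistema1}), we obtain the following result'' and then records the corollary without further argument. Your write-up is in fact more careful than the paper's, since you spell out the pushforward computation, the region correspondence, and the need for the cycle to avoid the pole of each M\"obius map---points the paper leaves entirely implicit.
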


It is important to note that the conditions provided in the previous proposition are not trivial. 
In fact, by taking $a=c=-1$, $b=d=1$ and $x_{0}=x_{1}=1$, we obtain the existence of a 
limit cycle (as illustrated in figure \ref{img3.1}).
\begin{figure}[!htb]
\centering
\includegraphics[scale=0.56]{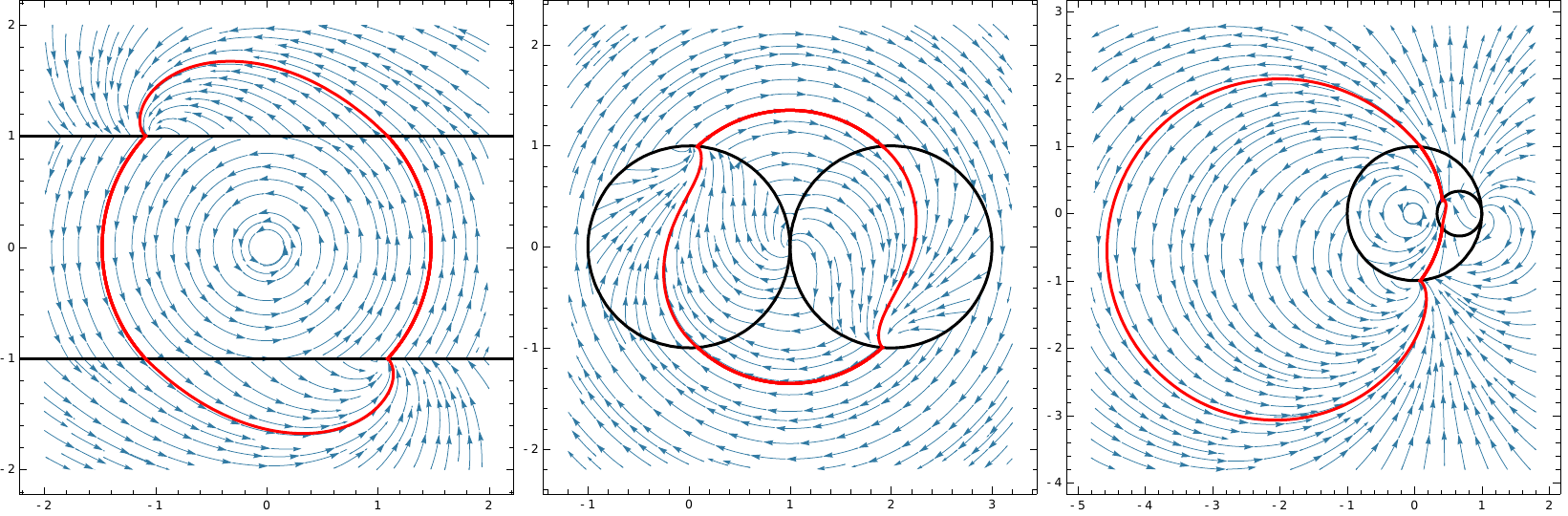}
\caption{Phase portrait of systems (\ref{sistema1}), (\ref{sistema1.1}) and (\ref{sistema1.2}) with $a=c=-1$, $b=d=1$ and $x_{0}=x_{1}=1$ .}
\label{img3.1}
\end{figure}

\begin{pro}\label{lc3}
    Let $a,b,c,d,x_{0}$ and $x_{1}$ be real number such that  $a,c<0$ e $b,d>0$, $x_{0},x_{1}>\sqrt{3}$. Then the following  system 
\begin{eqnarray}\label{sistema3}
\begin{cases}
\dot{z}=(a+ib)(z-i+x_{1}), \quad \hbox{if} \quad z\in \Sigma_{+},\\
\dot{z}=iz^{3}, \quad \quad \hbox{if} \quad z\in \Sigma_{c},\\
\dot{z}=(c+id)(z+i-x_{0}), \quad \hbox{if} \quad z\in \Sigma_{-},
\end{cases}
\end{eqnarray}
 has a limit cycle.
\end{pro}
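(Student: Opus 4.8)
The plan is to follow the same strategy as in Proposition~\ref{lc1}: build the Poincaré (first return) map on the lower discontinuity line $\Im(z)=-1$ as a composition of the three local flows, and then show that the fixed-point equation $\Pi(w_0)=w_0$ has a solution. The dynamics in the outer zones $\Sigma_+$ and $\Sigma_-$ are exactly the affine linear fields of Proposition~\ref{lc1}, so the corresponding half-return maps are reproduced verbatim: a point $s-i$ on $\Im(z)=-1$ is carried by the $\Sigma_-$ flow after time $\pi/d$ to the point with real part $-(s-x_0)e^{c\pi/d}+x_0$ on the same line, and analogously for the $\Sigma_+$ flow after time $\pi/b$. Hence the only genuinely new ingredient is the transit across the central zone, now governed by $\dot z=iz^3$ instead of $\dot z=iz$.

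For the central field I would first record a first integral and a symmetry. Since $\tfrac{d}{dt}(1/z^2)=-2i$, the orbits of $\dot z=iz^3$ are the level curves of $H(z)=\Re(1/z^2)=\frac{x^2-y^2}{(x^2+y^2)^2}$; in polar coordinates these are the ``petals'' $r^2=\cos(2\theta)/k$. Moreover, if $z(t)$ solves $\dot z=iz^3$, then so does $\overline{z(-t)}$, so each orbit is symmetric under the reflection $y\mapsto -y$ across the real axis. Consequently, whenever an orbit crosses $\Im(z)=-1$ at $(X,-1)$ and reaches $\Im(z)=1$ while staying inside the strip, it does so at the point $(X,1)$ with the \emph{same} real part, exactly as the circular orbits did in Proposition~\ref{lc1}. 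I would single out the arc passing through the tip of the petal on the positive real axis (for $X>0$) or the negative real axis (for $X<0$): this is the unique finite connecting arc, since the branch through the origin would take infinite time, the origin being an equilibrium.

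The decisive point---where the hypothesis $x_0,x_1>\sqrt3$ enters---is the transversality and orientation of the crossings. Expanding $i(X-i)^3=(3X^2-1)+i\,(X^3-3X)$ on the lower line (and with the analogous sign change on the upper line), the vertical component of the central field at $(X,\mp1)$ equals $X^3-3X=X(X^2-3)$. Thus the flow enters the strip with the correct orientation precisely when $|X|>\sqrt3$, while $|X|=\sqrt3$ is the tangency threshold. I would therefore verify that, along the constructed loop, the real parts of the two transit points exceed $\sqrt3$ in absolute value; the assumptions $a,c<0$, $b,d>0$ and $x_0,x_1>\sqrt3$ are tailored to guarantee this. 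Once the central transit is confirmed to preserve the real part, the composition reproduces the affine return map of Proposition~\ref{lc1}, whose equation $\Pi(w_0)=w_0$ has a unique solution, yielding the desired limit cycle.

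The main obstacle will be the central nonlinear flow: unlike the circular orbits of $\dot z=iz$, the cubic field has a degenerate equilibrium at the origin and several branches meeting each line, so one must argue carefully that the relevant connecting arc stays inside $|\Im(z)|\le 1$ and that $(X,-1)$ is the ``outer'' crossing, i.e.\ the one joined to the petal tip. Checking that the transit real parts remain beyond the $\sqrt3$ threshold all along the loop---so that the connection is genuinely realized by crossing orbits---is the technical crux, but it reduces to the elementary sign analysis above once the petal geometry is understood.
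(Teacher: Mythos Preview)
Your proposal is correct and follows essentially the same route as the paper's proof: both establish the reflection symmetry of the orbits of $\dot z=iz^{3}$ across the real axis (the paper via the polar form $r=k|\cos 2\theta|^{1/2}$ and the parity $r(-\theta)=r(\theta)$, you via the time-reversal $\overline{z(-t)}$ and the first integral $\Re(1/z^{2})$), use it to conclude that the central transit preserves the real part, and then compose with the same affine half-return maps in $\Sigma_{\pm}$ to obtain the identical Poincar\'e map and fixed-point equation as in Proposition~\ref{lc1}. Your explicit computation of the vertical component $X(X^{2}-3)$ on the switching lines, which explains the threshold $\sqrt{3}$, is actually more detailed than the paper, which simply imposes $s<-\sqrt{3}$ without further comment.
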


\begin{proof}
    First, note that the solutions of the equation $\dot{z}=iz^{3}$ are symmetric with respect to the $x$-axis.

    In fact, by writing $\dot{z}=iz^{3}$ in polar coordinates, we obtain that 
    \begin{eqnarray*}  
    \begin{cases}
    \dot{r}=-r^{3}\sin 2\theta,\\ 
    \dot{\theta}=r^{2}\cos 2\theta,
    \end{cases}
    \end{eqnarray*}
    where $z=x+iy=r(\cos \theta +i\sin \theta)$. It follows that
    \begin{eqnarray*}
        \dfrac{dr}{d\theta }=-r\tan 2\theta .
    \end{eqnarray*}

    Therefore, the orbits of this system satisfy:
    \begin{eqnarray*}
        r=k|\cos 2\theta|^{\frac{1}{2}}.
    \end{eqnarray*}
    
    Since the function satisfies $r(-\theta )=r(\theta )$, it follows that the orbits are symmetric with respect to the $x$-axis.

    Now, consider $w_{0}=s-i$, with $s<-\sqrt{3}$. We have that
    \begin{eqnarray*}
        z^{-}(t)=(w_{0}+i-x_{0})e^{(c+id)t}-(i-x_{0})
    \end{eqnarray*}
    is a solution of the equation $\dot{z}=(c+id)(z+i-x_{0})$, with the initial condition $z^{-}(0)=w_{0}$. Also 
    \begin{eqnarray*}
        z^{-}\left( \frac{\pi }{d}\right)=-(s-x_{0})e^{\frac{c\pi }{d}}+x_{0}-i.
    \end{eqnarray*}

    Let $z(t)$ be the solution of $\dot{z}=iz^{3}$ with initial condition $z(0)=-(s-x_{0})e^{\frac{c\pi }{d}}+x_{0}-i$. 
    Due to the symmetry of the solutions with respect to the $x$-axis, there exists $t_{0}$ such that $z(t_{0})=-(s-x_{0})e^{\frac{c\pi }{d}}+x_{0}+i$.

    Observe that 
    \begin{eqnarray*}
    z^{+}(t)=(-(s-x_{0})e^{\frac{c\pi }{d}}+x_{0}+i-i+x_{1})e^{(a+ib)t}-(-i+x_{1})
    \end{eqnarray*}
    is solution of the equation $\dot{z}=(a+ib)(z+i-x_{1})$, with initial condition $z^{+}(0)=z^{+}(0)=-(s-x_{0})e^{\frac{c\pi }{d}}-x_{0}+i$ and 
    \begin{eqnarray*}
    z^{+}\left( \frac{\pi }{a}\right)=((s-x_{0})e^{\frac{c\pi }{d}}-x_{0}-x_{1})e^{\frac{a\pi }{b}}-x_{1}+i.
    \end{eqnarray*}

    Now, consider $\tilde{z}(t)$, a solution of $\dot{z}=iz^{3}$, with initial condition 
    $((s-x_{0})e^{\frac{c\pi }{d}}-x_{0}-x_{1})e^{\frac{a\pi }{b}}-x_{1}+i$. Using the symmetry 
    of the solutions of $\dot{z}=iz^{3}$ with respect to the $x$-axis, we conclude that there 
    exists $t_{1}$ such that $\tilde{t_{1}}=((s-x_{0})e^{\frac{c\pi }{d}}-x_{0}-x_{1})e^{\frac{a\pi }{b}}-x_{1}-i$.

    Therefore, the Poincaré map in a neighborhood of $w_{0}=s-i$ is given by
    \begin{eqnarray*}
    \Pi(s-i)=((s-x_{0})e^{\frac{c\pi }{d}}-x_{0}-x_{1})e^{\frac{a\pi }{b}}-x_{1}-i.
    \end{eqnarray*}

    Since $a,b<0$, the equation $\Pi(w_{0})=w_{0}$ has a unique solution, given by 
    \begin{eqnarray*}
    w_{0}=\dfrac{-x_{0}e^{\frac{c\pi }{d}+\frac{a\pi }{b}}-(x_{0}+x_{1})e^{\frac{a\pi }{b}})-x_{1}}{(1-e^{\frac{c\pi }{d}+\frac{a\pi }{b}})}-i.    
    \end{eqnarray*}
  
    Consequently, the system has a unique limit cycle.
\end{proof}

Although the previous proposition was stated for $\dot{z}=iz^{3}$, it can be extended to 
the case $\dot{z}=iz^{n}$, with $n$ odd. To do so, the values of $x_{0}$ and $x_{1}$ must 
be adjusted according to the value of $n$. In fact, just as in the cubic case, the equation $\dot{z}=iz^{n}$ also exhibits symmetry with respect to  the real axis. The arguments used in the proof remain valid in this more general setting.

\begin{cor}
    Under the hypotheses of proposition (\ref{lc3}), we have that the systems
\begin{eqnarray}\label{sistema3.1}
\begin{cases}
\dot{w}=\left(\frac{b-ai}{2}\right)(w-1)(-2i+(w-1)(-i+x_{1})), \quad \hbox{if}\quad w\in \Sigma_{+}^{E},\\
\dot{w}=\frac{4i}{w-1}, \quad \quad \hbox{if} \quad w\in \Sigma_{c}^{E},\\
\dot{w}=\left(\frac{d-ci}{2}\right)(w-1)(-2i+(w-1)(i-x_{0}) ), \quad \hbox{if} \quad w\in \Sigma_{-}^{E},
\end{cases}
\end{eqnarray}
and
\begin{eqnarray}\label{sistema3.2}
\begin{cases}
\dot{w}=\left(\frac{b-ai}{2}\right)(w-1)(-2iw+(w-1)(-i+x_{1})), \quad \hbox{if}\quad w\in \Sigma_{+}^{I},\\
\dot{w}=\frac{4iw^{3}}{w-1}, \quad \quad \hbox{if} \quad w\in \Sigma_{c}^{I},\\
\dot{w}=\left(\frac{d-ci}{2}\right)(w-1)(-2iw+(w-1)(i-x_{0}) ), \quad \hbox{if}\quad w\in \Sigma_{-}^{I},
\end{cases}
\end{eqnarray}
  has a limit cycle.  
\end{cor}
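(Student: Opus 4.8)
The plan is to deduce both systems directly from Proposition \ref{lc3} by transporting its limit cycle through the two Möbius maps
\[
\phi_E(z)=\frac{z-2i}{z}, \qquad \phi_I(z)=\frac{z}{z+2i},
\]
exactly as in the corollary following Proposition \ref{lc1}. First I would verify that $\phi_E$ and $\phi_I$ carry the parallel-line configuration of \eqref{sistema3} onto the tangent-circle configurations. Since a Möbius transformation sends lines and circles to lines and circles, it suffices to track the images of the two boundary lines together with the interiors: a short computation shows that $\phi_E$ maps $\Im(z)=1$ onto $\mathbb{S}_1$ and $\Im(z)=-1$ onto $\mathbb{S}_2$, so that $\Sigma_+,\Sigma_c,\Sigma_-$ are sent to $\Sigma_+^E,\Sigma_c^E,\Sigma_-^E$; likewise $\phi_I$ sends $\Im(z)=1$ to $\mathbb{S}_3$ and $\Im(z)=-1$ to $\mathbb{S}_1$, matching $\Sigma_\sigma^I$. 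This is precisely the correspondence recorded in the discussion accompanying Figure \ref{img.1}.

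Next I would apply the transformation proposition of Section 2 to \eqref{sistema3}. For $\phi_E$ one has $\phi_E^{-1}(w)=\tfrac{-2i}{w-1}$, and substituting into the transformed field $\dot w=-\tfrac{(cw-a)^2}{bc-ad}\,f^\sigma(\phi_E^{-1}(w))$ turns the outer affine fields $(a+ib)(z-i+x_1)$ and $(c+id)(z+i-x_0)$ into the first and third lines of \eqref{sistema3.1}, while the central field $iz^3$ becomes $\tfrac{4i}{w-1}$ after using $(-2i)^3=8i$. The same computation with $\phi_I^{-1}(w)=\tfrac{-2iw}{w-1}$ reproduces \eqref{sistema3.2}, the central field now simplifying to $\tfrac{4iw^3}{w-1}$. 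These are routine algebraic reductions of the formula already established in Section 2, so no genuine difficulty arises here.

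Finally, I would invoke that a Möbius transformation is a biholomorphism of the Riemann sphere $\overline{\mathbb{C}}$, hence a conjugacy between \eqref{sistema3} and each transformed system that maps orbits to orbits and the discontinuity set to the discontinuity set. Consequently the crossing limit cycle produced in Proposition \ref{lc3} is carried to an isolated crossing periodic orbit of \eqref{sistema3.1} and of \eqref{sistema3.2}, which is the assertion of the corollary. The one point that needs care---and which I expect to be the only real obstacle---is checking that the conjugacy genuinely respects the piecewise/Filippov structure: that each zone is sent to the corresponding zone so that a crossing orbit remains a crossing orbit, and that the transported curve behaves well at the poles $z=0$ of $\phi_E$ and $z=-2i$ of $\phi_I$. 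Working on $\overline{\mathbb{C}}$ removes the apparent singularity, since these poles are merely the preimages of the point at infinity; one should nonetheless confirm that the image is a single closed crossing cycle rather than a curve splitting at that point.
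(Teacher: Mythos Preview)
Your proposal is correct and matches the paper's approach: the paper states this corollary without proof, obtaining it simply by applying the Möbius changes $w=\frac{z-2i}{z}$ and $w=\frac{z}{z+2i}$ to system \eqref{sistema3} via the transformation proposition of Section~2, exactly as you outline. Your additional remarks about verifying the zone correspondence and the behavior at the poles are more careful than anything the paper makes explicit, but they are in the same spirit.
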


It is possible to find piecewise holomorphic systems that satisfy the conditions of the 
previous proposition. As an example, take $a=c=-1$, $b=d=1$ and $x_{0}=x_{1}=2$. 
Then, system (\ref{sistema3}) has a limit cycle.

\begin{figure}[!htb] 
\centering
\includegraphics[scale=0.56]{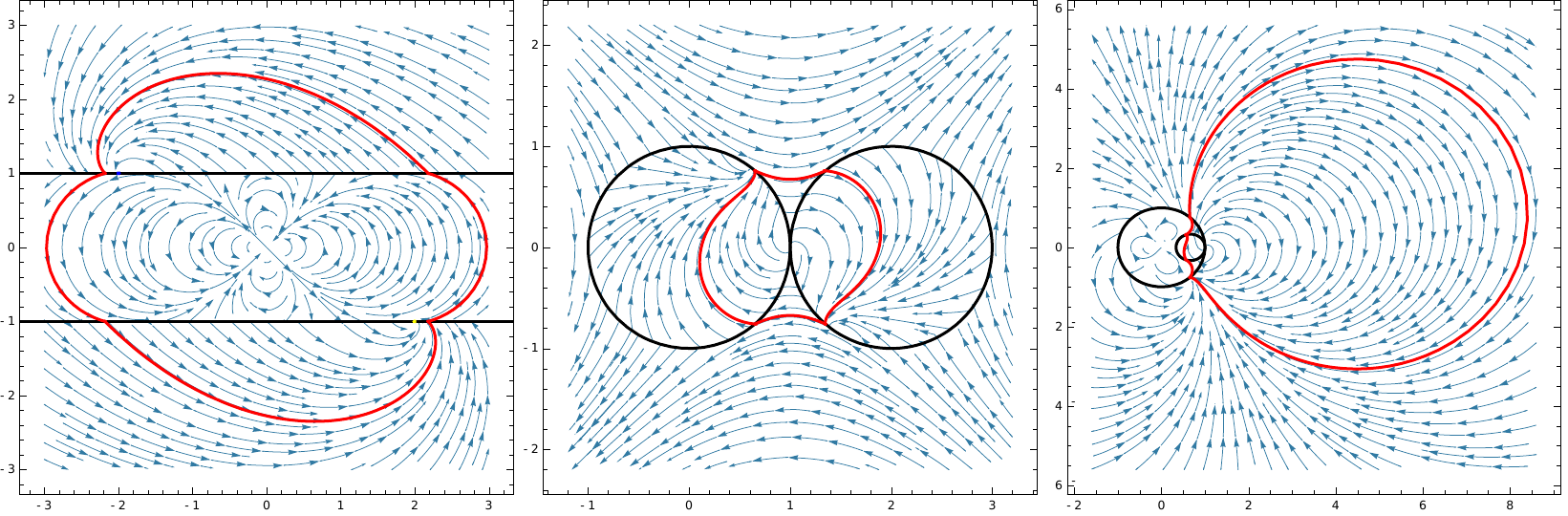}
\caption{Phase portrait of system (\ref{sistema3}), (\ref{sistema3.1}) and (\ref{sistema3.2}) with $a=c=-1$, $b=d=1$ and $x_{0}=x_{1}=2$ .}
\label{img3.3}
\end{figure}

Before presenting the next proposition, we recall a useful result proved in \cite{refb1,refb21}, 
which will play an important role in the proof of the following proposition an later results as well:
\begin{pro}\label{lc2}
  The orbits of $\dot{z}=f(z)$ are contained in the level curves of $\Im (G(z))$, with $G'(z)=\frac{1}{f(z)}$.  
\end{pro}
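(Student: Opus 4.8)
The plan is to exhibit the holomorphic primitive $G$ of $1/f$ as a (locally defined) first integral whose imaginary part is constant along trajectories, so that the verification reduces to a one-line chain-rule computation. First I would fix attention on an open set $U$ on which $f$ has neither zeros nor poles; there $1/f$ is holomorphic, and on any simply connected such $U$ it admits a holomorphic primitive $G$ with $G'(z)=1/f(z)$, which is the $G$ in the statement.

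Next, I would take an arbitrary solution $z(t)$ of $\dot z=f(z)$ with image in $U$ and differentiate the composite $t\mapsto G(z(t))$ by the chain rule:
\[
\frac{d}{dt}\,G(z(t)) = G'(z(t))\,\dot z(t) = \frac{1}{f(z(t))}\,f(z(t)) = 1.
\]
Since the right-hand side is the real constant $1=1+0\cdot i$, separating real and imaginary parts yields $\tfrac{d}{dt}\,\Re\bigl(G(z(t))\bigr)=1$ and, crucially, $\tfrac{d}{dt}\,\Im\bigl(G(z(t))\bigr)=0$. Hence $\Im\bigl(G(z(t))\bigr)$ is constant in $t$, which says precisely that the orbit lies inside a single level curve $\{z:\Im(G(z))=\text{const}\}$. (As a byproduct, $\Re(G)$ advances linearly with $t$, so $G$ linearizes the flow, which is the mechanism behind the local integrability invoked throughout the paper.)

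The only real delicacy, and the point I would treat as the main obstacle, is the global well-definedness of $G$: the primitive of $1/f$ may be multivalued when the punctured domain is not simply connected, so $G$ is only guaranteed locally. This is not an issue for the conclusion, however: the statement is about orbits, and the argument above is purely local along each trajectory, so one may cover the orbit by simply connected charts and patch. Near zeros and poles of $f$ — where $G$ genuinely fails to extend — one instead appeals to the classification of phase portraits and the normal forms recalled in the introduction, so that the level-curve description persists across these special points. Away from such singularities, the computation is complete and the orbits are contained in the level curves of $\Im(G(z))$ as claimed.
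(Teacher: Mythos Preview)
Your argument is correct and is precisely the standard one: differentiate $G\circ z$ along a trajectory, obtain the real constant $1$, and read off that $\Im(G(z(t)))$ is conserved. Note, however, that the paper does not supply its own proof of this proposition; it is stated as a result recalled from \cite{refb1,refb21}, so there is nothing in the text to compare against beyond confirming that your computation matches the mechanism those references use.
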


Now, we study the holomorphic vector field $\frac{z^{2}}{1+z}$, which will appear in the 
central zone of the piecewise-defined system we are about to study. To understand the 
behavior of its trajectories we will make use of the previous proposition.

\begin{pro}\label{lc31}
    Let $a,b,c,d,x_{0}$ and $x_{1}$ be real number such that  $a,c<0$, $b,d>0$, $x_{0}> 1.36$,
    $x_{1}>0.55$. Then the following  system 
\begin{eqnarray}\label{sistema2}
\begin{cases}
\dot{z}=(a+ib)(z-i+x_{1}), \quad \hbox{if} \quad z\in \Sigma_{+},\\
\dot{z}=\dfrac{iz^{2}}{1+z}, \quad \quad \hbox{if} \quad z\in \Sigma_{c},\\
\dot{z}=(c+id)(z+i-x_{0}), \quad \hbox{if} \quad z\in \Sigma_{-},
\end{cases}
\end{eqnarray}
 has a limit cycle.
\end{pro}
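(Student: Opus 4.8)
The plan is to mirror the proofs of Propositions \ref{lc1} and \ref{lc3}: I would build the Poincaré return map along a transversal to the lower line $\Im(z)=-1$ and show that $\Pi(w_0)=w_0$ admits a unique solution. The whole argument rests on one structural fact about the central field $\dot z=\frac{iz^{2}}{1+z}$, namely that \emph{its orbits are symmetric with respect to the real axis}. Granting this, a crossing point $(X,-1)$ is carried by the central flow to the crossing point $(X,1)$ with the \emph{same} real part—exactly as the circular orbits of $\dot z=iz$ behaved—so the rest of the computation is essentially affine. Note also that the outer equilibria $z=x_0-i$ and $z=i-x_1$ lie on the lines $\Im(z)=\mp1$, which is why a half-turn of each outer spiral returns to the same line.

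First I would establish the symmetry via Proposition \ref{lc2}: the orbits of $\dot z=\frac{iz^{2}}{1+z}$ lie in the level curves of $\Im(G(z))$ with $G'(z)=\frac{1+z}{iz^{2}}$. Integrating gives $G(z)=\frac{i}{z}-i\log z$ up to a real additive constant, so that
\[
\Im\big(G(z)\big)=\frac{x}{x^{2}+y^{2}}-\tfrac12\log\!\big(x^{2}+y^{2}\big),\qquad z=x+iy .
\]
This is even in $y$, hence every level curve of $\Im(G)$ is invariant under $z\mapsto\bar z$; equivalently $\overline{f(\bar z)}=-f(z)$, which forces the orbit set to be symmetric about the real axis. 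In particular the level curve through $(X,-1)$ also passes through $(X,1)$, and for the admissible range of $X$ discussed below the flow connects these two points within the strip.

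Next I would assemble $\Pi$ as before. Taking $w_0=s-i$, the solution in $\Sigma_-$ of $\dot z=(c+id)(z+i-x_0)$ reaches at $t=\pi/d$ the point $-(s-x_0)e^{c\pi/d}+x_0-i$; the central symmetry lifts its real part to $\Im(z)=1$; the solution in $\Sigma_+$ of $\dot z=(a+ib)(z-i+x_1)$ reaches at $t=\pi/b$ the point $\big((s-x_0)e^{c\pi/d}-x_0-x_1\big)e^{a\pi/b}-x_1+i$; and a final central passage returns it to $\Im(z)=-1$, yielding
\[
\Pi(s-i)=\big((s-x_0)e^{c\pi/d}-x_0-x_1\big)e^{a\pi/b}-x_1-i .
\]
Since $a,c<0$ and $b,d>0$ we have $e^{c\pi/d+a\pi/b}\in(0,1)$, so $\Pi(w_0)=w_0$ is an affine equation in $s$ with nonzero slope defect $1-e^{c\pi/d+a\pi/b}$; it has a unique solution $s^{\ast}$, and because the return slope is strictly less than $1$ this fixed point is an isolated, attracting crossing limit cycle.

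The hard part—and the source of the explicit thresholds $x_0>1.36$ and $x_1>0.55$—is verifying that the central arcs used above actually exist inside $|\Im(z)|<1$. Unlike $\dot z=iz$ and $\dot z=iz^{3}$, the field $\frac{iz^{2}}{1+z}$ has an equilibrium at $z=0$ and a pole at $z=-1$ sitting inside $\Sigma_c$, and not every level curve joining $(X,-1)$ to $(X,1)$ does so by a simple arc contained in the strip: near $z=0$ and $z=-1$ the level sets of $\Im(G)$ degenerate, so the connection can fail for an unfavorable $X$. I would therefore analyze the geometry of the level sets of $\Im(G)$ to pin down the range of real parts $X$ for which the connecting arc is a simple curve crossing the real axis exactly once, and then translate this range, through the formulas for the crossing points, into bounds on $x_0,x_1$ that place $s^{\ast}$ inside the valid region. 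I expect this quantitative control of the level-curve geometry near the two singularities to be where essentially all the work lies.
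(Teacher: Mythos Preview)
Your proposal is correct and follows essentially the same route as the paper: the paper also invokes Proposition~\ref{lc2} to obtain the first integral $H(x,y)=-\dfrac{x}{x^{2}+y^{2}}+\tfrac12\ln(x^{2}+y^{2})$ (the negative of your $\Im(G)$), uses $H(x,-1)=H(x,1)$ to transfer the real part across the strip, and obtains the identical affine Poincar\'e map with its unique fixed point. Your discussion of the thresholds $x_0>1.36$, $x_1>0.55$ is in fact more scrupulous than the paper's, which simply asserts the existence of the central crossing times without analyzing the level-set geometry near $z=0$ and $z=-1$.
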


\begin{proof}
    Initially, note that the solutions of the equation $\dot{z}=\frac{iz^{2}}{1+z}$ are contained in the level curves of the function 
    \begin{eqnarray*}
        H(x,y)=-\frac{x}{x^2+y^2}+\frac{1}{2}\ln{(x^{2}+y^{2})}.
    \end{eqnarray*}

    Since $H(x,-1)=H(x,1)$, if $(x,-1)$ lies on a level curve of $H$, then $(x,1)$ also lies on same level curve. 
    Thus, if $z(t)$ is a solution of the equation $\dot{z}=\frac{iz^{2}}{1+z}$ with initial contion $z(0)=x-i$, 
    then there exists a time $t_{0}$ such that $z(t_{0})=x+i$. 

    Consider $w_{0}\in \Sigma_{2}$, we have
    \begin{eqnarray*}
        z^{-}(t)=(w_{0}+i-x_{0})e^{(c+id)t}-(i-x_{0}),
    \end{eqnarray*}
    is a solution of $\dot{z}=(c+id)(z+i-z_{0})$ with $z^{-}(0)=w_{0}$ and
    \begin{eqnarray*}
        z^{-}\left( \frac{\pi }{d}\right)=-(s-x_{0})e^{\frac{c\pi }{d}}+x_{0}-i.
    \end{eqnarray*}

    Let $z(t)$ be a solution of the equation $\dot{z}=\frac{iz^{2}}{1+z}$, with the initial condition 
    $z(0)=-(s-x_{0})e^{\frac{c\pi }{d}}+x_{0}-i$, then there exists $t_{0}$ such tach $z(t_{0})=-(s-x_{0})e^{\frac{c\pi }{d}}+x_{0}+i$.

    We have that 
    \begin{eqnarray*}
        z^{+}(t)=(-(s-x_{0})e^{\frac{c\pi }{d}}+x_{0}+i-i+x_{1})e^{(a+ib)t}-(-i+x_{1})
    \end{eqnarray*}
    is solution of $\dot{z}=(a+ib)(z-i+x_{1})$,such that $z^{+}(t)=-(s-x_{0})e^{\frac{c\pi }{d}}-x_{0}+i$ and
    \begin{eqnarray*}
        z^{+}\left( \frac{\pi }{a}\right)=((s-x_{0})e^{\frac{c\pi }{d}}-x_{0}-x_{1})e^{\frac{a\pi }{b}}-x_{1}+i.
    \end{eqnarray*}

    At last, let $\tilde{z}(t)$ be a solution of the equation $\dot{z}=\frac{iz^{2}}{1+z}$ with
     $\tilde{z}(0)=((s-x_{0})e^{\frac{c\pi }{d}}-x_{0}-x_{1})e^{\frac{a\pi }{b}}-x_{1}+i$, then 
     there exists $t_{1}$ such that $\tilde{z}(t_{1})=((s-x_{0})e^{\frac{c\pi }{d}}-x_{0}-x_{1})e^{\frac{a\pi }{b}}-x_{1}-i$.

    Therefore, the Poincaré map around $w_{0} = s_{0} + i$ is given by
    \begin{eqnarray*}
        \Pi(s - i) = \left((s - x_{0})e^{\frac{c\pi}{d}} - x_{0} - x_{1}\right)e^{\frac{a\pi}{b}} - x_{1} - i
    \end{eqnarray*}

    To conclude, we seek the solutions of the equation $ \Pi(s - i) = s - i$. The number of 
    solutions to this equation corresponds to the number of limit cycles of the system.

    Since  $\frac{a}{b} + \frac{c}{d} \neq 0$, the equation  $\Pi(s - i) = s - i$  has a unique solution, 
    which is $s = \frac{-x_{0}e^{\frac{c\pi}{d} + \frac{a\pi}{b}} - (x_{0} + x_{1})e^{\frac{a\pi}{b}} - x_{1}}{1 - e^{\frac{c\pi}{d} + 
    \frac{a\pi}{b}}}$, then the system (\ref{sistema2}) has a limit cycle.
    
\end{proof}

\begin{cor}
    Under the hypotheses of proposition (\ref{lc2}), we have that the systems
\begin{eqnarray}\label{sistema2.1}
\begin{cases}
\dot{w}=\left(\frac{b-ai}{2}\right)(w-1)(-2i+(w-1)(-i+x_{1})), \quad \hbox{if} \quad w\in \Sigma_{+}^{E},\\
\dot{w}=\frac{2(w-1)}{w-1-2i}, \quad \quad \hbox{if} \quad w\in \Sigma_{c}^{E},\\
\dot{w}=\left(\frac{d-ci}{2}\right)(w-1)(-2i+(w-1)(i-x_{0}) ), \quad \hbox{if} \quad w\in \Sigma_{-}^{E},
\end{cases}
\end{eqnarray}
and
\begin{eqnarray}\label{sistema2.2}
\begin{cases}
\dot{w}=\left(\frac{b-ai}{2}\right)(w-1)(-2iw+(w-1)(-i+x_{1})), \quad \hbox{if} \quad w\in \Sigma_{+}^{I},\\
\dot{w}=\frac{2w^{2}(w-1)}{w(-2i+1)-1}, \quad \quad \hbox{if} \quad w\in \Sigma_{c}^{E},\\
\dot{w}=\left(\frac{d-ci}{2}\right)(w-1)(-2iw+(w-1)(i-x_{0}) ), \quad \hbox{if} \quad w\in \Sigma_{-}^{I},
\end{cases}
\end{eqnarray}
  has a limit cycle.  
\end{cor}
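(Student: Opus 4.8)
The plan is to derive systems \eqref{sistema2.1} and \eqref{sistema2.2} directly from system \eqref{sistema2}, whose limit cycle is guaranteed by Proposition~\ref{lc31}, by pushing it forward under the two M\"obius maps $\phi_E(z)=\frac{z-2i}{z}$ and $\phi_I(z)=\frac{z}{z+2i}$, and then to argue that such a biholomorphic change of coordinates carries the limit cycle to a limit cycle. Recall from the discussion of M\"obius transformations above that $\phi_E$ sends the strip configuration $\{\Im(z)=\pm1\}$ to the externally tangent circles, mapping $\Sigma_{+},\Sigma_{c},\Sigma_{-}$ onto $\Sigma_{+}^{E},\Sigma_{c}^{E},\Sigma_{-}^{E}$ as in \eqref{reg+-cE}, while $\phi_I$ produces the internally tangent configuration \eqref{reg+-cI}; thus the transformed systems are automatically discontinuous exactly across the prescribed circles.

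First I would carry out the external case. Writing $\phi_E$ in the form $\frac{az+b}{cz+d}$ gives $a=1$, $b=-2i$, $c=1$, $d=0$, so that $\phi_E^{-1}(w)=\frac{-2i}{w-1}$ and the conjugating factor of the transformation rule established above is
\[
-\frac{(cw-a)^{2}}{bc-ad}=\frac{(w-1)^{2}}{2i}.
\]
Substituting $z=\frac{-2i}{w-1}$ into the central field $\frac{iz^{2}}{1+z}$ and multiplying by this factor collapses, after clearing denominators, to a constant multiple of $\frac{w-1}{w-1-2i}$, i.e. to the central line of \eqref{sistema2.1}. The two outer fields of \eqref{sistema2} coincide verbatim with those of \eqref{sistema1}, so their images are exactly the outer lines already obtained in the Corollary following Proposition~\ref{lc1}; this assembles the full system \eqref{sistema2.1}. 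The internal case is identical in structure: with $\phi_I$ one reads off $a=1$, $b=0$, $c=1$, $d=2i$, hence $\phi_I^{-1}(w)=\frac{-2iw}{w-1}$ and the same factor $\frac{(w-1)^{2}}{2i}$, and substituting into $\frac{iz^{2}}{1+z}$ reproduces the central line of \eqref{sistema2.2}.

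Finally I would promote the change of variables to a statement about orbits. Since $\phi_E$ and $\phi_I$ are homeomorphisms of the Riemann sphere, they map trajectories of \eqref{sistema2} to trajectories of the transformed systems and respect the Filippov convention, because discontinuity curves go to discontinuity curves and regions to regions. Consequently the crossing limit cycle produced in Proposition~\ref{lc31} is carried to a closed crossing orbit of \eqref{sistema2.1}, respectively \eqref{sistema2.2}, that is, to a limit cycle. The one point that requires care, and which I expect to be the main obstacle, is that each M\"obius map has a pole---at $z=0$ for $\phi_E$ and at $z=-2i$ for $\phi_I$---whose image is the point at infinity; one must check that the cycle of \eqref{sistema2} avoids this pole so that its image is a genuine bounded limit cycle. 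For $\phi_E$ the pole $z=0\in\Sigma_{c}$ is a zero of the central field $\frac{iz^{2}}{1+z}$, hence an equilibrium that the cycle cannot meet; for $\phi_I$ the pole $z=-2i\in\Sigma_{-}$ must be shown to lie off the orbit, which follows from the explicit location of the cycle obtained in the proof of Proposition~\ref{lc31}.
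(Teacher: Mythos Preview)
Your proposal is correct and follows exactly the route the paper intends: the corollary in the paper is obtained by applying the changes $w=\frac{z-2i}{z}$ and $w=\frac{z}{z+2i}$ to system~\eqref{sistema2} (the paper states this explicitly before the analogous corollary to Proposition~\ref{lc1} and gives no further proof). Your write-up is in fact more detailed than the paper's, in particular your verification via the transformation rule for $\dot w$ and your discussion of why the poles $z=0$ and $z=-2i$ are avoided by the limit cycle; the paper leaves these points implicit.
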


Observe that the conditions stated in the previous proposition are nontrivial. For instance, 
by choosing $a=c=-1$, $b=d=1$, $x_{0}=1$ and $x_{1}=2$, one can verify the existence of limit cycle, as show in figure (\ref{img3.2}).

\begin{figure}[!htb]
\centering
\includegraphics[scale=0.56]{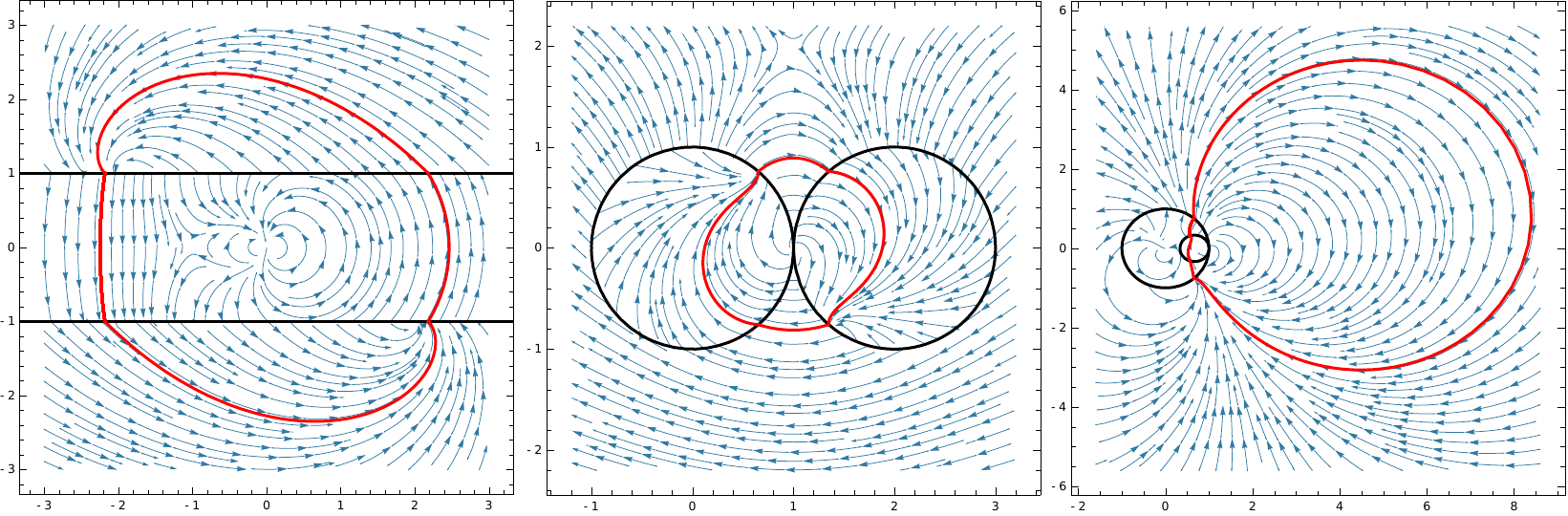}
\caption{Phase portrait of system (\ref{sistema2}), (\ref{sistema2.1}) and  (\ref{sistema2.2}) with
 $a=c=-1$, $b=d=1$ and $x_{0}=x_{1}=2$ .}
\label{img3.2}
\end{figure}

\begin{pro}\label{lc5}
    Let $a,b,c,d,x_{0}$ and $x_{1}$ be non-zero real number such that $a,c<0$ e $b,d>0$, $x_{0},x_{1}>1$. Then the following  system 
\begin{eqnarray}\label{sistema5}
\begin{cases}
\dot{z}=(a+ib)(z-i+x_{1}), \quad \hbox{if} \quad z\in \Sigma_{+},\\
\dot{z}=\dfrac{i}{z}, \quad \quad \hbox{if} \quad z\in \Sigma_{c},\\
\dot{z}=(c+id)(z+i-x_{0}), \quad \hbox{if} \quad z\in \Sigma_{-},
\end{cases}
\end{eqnarray}
 has a limit cycle.
\end{pro}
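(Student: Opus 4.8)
The plan is to follow exactly the scheme used in Propositions \ref{lc1}, \ref{lc3} and \ref{lc31}: build the Poincaré map of the three-zone flow along the discontinuity line $\Im(z)=-1$, reduce the search for periodic orbits to a single fixed-point equation, and show that this equation has exactly one (hence at least one) solution. The only genuinely new ingredient is the treatment of the central field $\dot z=i/z$, which here replaces the fields $iz$, $iz^3$ and $iz^2/(1+z)$ of the earlier statements, so the task is to establish the analogue of the reflection property that those proofs exploited.

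First I would determine a first integral for the central zone. By Proposition \ref{lc2} the orbits of $\dot z=i/z$ lie on the level curves of $\Im(G(z))$ with $G'(z)=1/f(z)=-iz$, hence $G(z)=-iz^2/2$ and the orbits are contained in the level curves of
\[
H(x,y)=\frac{y^2-x^2}{2},\qquad z=x+iy .
\]
These are the hyperbolas $y^2-x^2=\text{const}$, symmetric with respect to the real axis; in particular $H(x,-1)=H(x,1)=(1-x^2)/2$. For $|x|>1$ the level curve through $(x,-1)$ is a single connected branch $x=\pm\sqrt{(x^2-1)+y^2}$ that meets both lines $\Im(z)=\pm1$ at the same abscissa and stays bounded away from the pole at the origin, so the central trajectory issuing from $x-i$ reaches $x+i$ in finite time. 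This is the precise analogue of the identity $H(x,-1)=H(x,1)$ used in Proposition \ref{lc31}, and it is the step where the hypothesis $x_0,x_1>1$ enters, guaranteeing that the crossing abscissae lie outside the strip $|x|\le 1$ in which the two branches decouple.

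With this symmetry the computation is formally identical to the earlier ones. Starting from $w_0=s-i$, the lower solution $z^-(t)=(w_0+i-x_0)e^{(c+id)t}-(i-x_0)$ gives $z^-(\pi/d)=-(s-x_0)e^{c\pi/d}+x_0-i$; the central arc carries this to $-(s-x_0)e^{c\pi/d}+x_0+i$; the upper solution $z^+(t)=(\,\cdots+x_1)e^{(a+ib)t}-(-i+x_1)$ at $t=\pi/b$ yields $((s-x_0)e^{c\pi/d}-x_0-x_1)e^{a\pi/b}-x_1+i$; and a second central arc returns the point to $\Im(z)=-1$. The resulting Poincaré map is
\[
\Pi(s-i)=\bigl((s-x_0)e^{c\pi/d}-x_0-x_1\bigr)e^{a\pi/b}-x_1-i .
\]
Since $a,c<0$ and $b,d>0$ we have $e^{c\pi/d+a\pi/b}\in(0,1)$, so $\Pi(s-i)=s-i$ is affine with nonzero defect $1-e^{c\pi/d+a\pi/b}\neq0$ and has the unique solution
\[
s=\frac{-x_0e^{c\pi/d+a\pi/b}-(x_0+x_1)e^{a\pi/b}-x_1}{1-e^{c\pi/d+a\pi/b}},
\]
producing a single crossing limit cycle.

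The main obstacle I anticipate is not the bookkeeping of the flow, which is routine and already appears three times above, but justifying the central connection rigorously. One must check that the two crossing abscissae, $X_1=-(s-x_0)e^{c\pi/d}+x_0$ and the fixed value $s$ itself, satisfy $|X_1|>1$ and $|s|>1$ so that each central arc lives on one branch of the hyperbola and crosses both lines transversally. A short sign analysis does this: because $x_0,x_1>1$ and $e^{c\pi/d},e^{a\pi/b}\in(0,1)$, the fixed point is negative with $|s|>x_1>1$, whence $s<-1$, while $X_1=-s\,e^{c\pi/d}+x_0(1+e^{c\pi/d})>1$. Thus $X_1>1$ places the first arc on the right branch, where $\dot y=x/(x^2+y^2)>0$ drives the flow upward from $\Im(z)=-1$ to $\Im(z)=1$, and $s<-1$ places the closing arc on the left branch, where $\dot y<0$ drives the flow downward, exactly matching the orientation required by the return map. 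This is the one place where the field $i/z$, with its pole at the origin and its hyperbolic (rather than closed) level curves, behaves qualitatively differently from the centers treated earlier, and it is precisely the role played by the hypothesis $x_0,x_1>1$.
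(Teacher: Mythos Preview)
Your proof is correct and follows the same architecture as the paper's: establish the reflection symmetry $(x,-1)\leftrightarrow(x,1)$ for the central flow, then reuse verbatim the Poincar\'e-map computation from Propositions \ref{lc1}--\ref{lc31}. The only cosmetic difference is that the paper derives the symmetry of $\dot z=i/z$ via polar coordinates (obtaining $r=k|\cos2\theta|^{-1/2}$, hence $r(-\theta)=r(\theta)$), whereas you use Proposition~\ref{lc2} to write down the Cartesian first integral $H=(y^2-x^2)/2$; both are equivalent, and your additional verification that the crossing abscissae satisfy $X_1>1$ and $s<-1$---so the two central arcs lie on the correct hyperbolic branches with the correct monotonicity in $y$---is a point the paper leaves implicit.
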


\begin{proof}
Consider the equation $\dot{z}=f(z)=\frac{i}{z}$. Initially, observe that the solutions to this equation are symmetric with respect to the $x$-axis. In fact, in polar coordinates the equation $\dot{z}=f(z)$ can be rewritten as
\begin{eqnarray*}
\begin{cases}
\dot{r}=r^{-1}\sin 2\theta, \\
\dot{\theta}=r^{-2}\cos 2\theta .
\end{cases}
\end{eqnarray*}

 Thus, it follows that the orbits of the equation satisfy
\begin{eqnarray*}
 r=\dfrac{k}{|\cos 2\theta |^{\frac{1}{2}}}.
\end{eqnarray*}

Since $r(-\theta )=r(\theta)$, it follows that the orbits of the equation $\dot{z}=f(z)$ are symmetry with respect to the $x$-axis.

Let
\begin{eqnarray*}
    z^{-}(t)=(w_{0}+i-x_{0})e^{(c+id)t}-(i-x_{0}),
\end{eqnarray*}
be the solution of equation $\dot{z}=(c+id)(z+i-x_{0})$ with the initial condition $z^{-}(0)=w_{0}=s-i$. Note that 
\begin{eqnarray*}
    z^{-}\left( \frac{\pi }{d}\right)=-(s-x_{0})e^{\frac{c\pi }{d}}+x_{0}-i.
\end{eqnarray*}

Consider $z(t)$ the solution of the equation $\dot{z}=f(z)$ with the initial condition $z(0)=(s-x_{0})
e^{\frac{c\pi }{d}}+x_{0}-i$. Due to the symmetry of the solutions of $\dot{z}=f(z)$, there exists a 
$t_{0}$ such that $z(t_{0})=(s-x_{0})e^{\frac{c\pi }{d}}+x_{0}+i$.

Now, let 
\begin{eqnarray*}
    z^{+}(t)=(-(s-x_{0})e^{\frac{c\pi }{d}}+x_{0}+x_{1})e^{(a+ib)t}-(-i+x_{1}),
\end{eqnarray*}
be the solution of the equation $\dot{z}=(a+ib)(z+i-x_{1})$ with the initial condition 
$z^{+}(0)=z^{+}(0)=-(s-x_{0})e^{\frac{c\pi }{d}}-x_{0}+i$ and $z^{+}\left( \frac{\pi }{a}\right)=
((s-x_{0})e^{\frac{c\pi }{d}}-x_{0}-x_{1})e^{\frac{a\pi }{b}}-x_{1}+i$.

Finally, consider $\tilde{z}(t)$ the solution of the equation $\dot{z}=f(z)$ with the initial condition 
$\tilde{z}(0)=((s-x_{0})e^{\frac{c\pi }{d}}-x_{0}-x_{1})e^{\frac{a\pi }{b}}-x_{1}+i$. Due to the 
symmetry of the solution of $\dot{z}=f(z)$, there exists a $t_{1}$ such that $\tilde{z}(t_{1})=
((s-x_{0})e^{\frac{c\pi }{d}}-x_{0}-x_{1})e^{\frac{a\pi }{b}}-x_{1}-i$. Thus, it follows that the 
Poincaré map around $w_{0}=s-i$ is given by
\begin{eqnarray*}
    \Pi(s-i)=((s-x_{0})e^{\frac{c\pi }{d}}-x_{0}-x_{1})e^{\frac{a\pi }{b}}-x_{1}-i.
\end{eqnarray*}

Since the equation $\Pi(w_{0})=w_{0}$ has a unique solution, given by
 \begin{eqnarray*}
    w_{0}=\dfrac{-x_{0}e^{\frac{c\pi }{d}+\frac{a\pi }{b}}-(x_{0}+x_{1})e^{\frac{a\pi }{b}})-x_{1}}{(1-e^{\frac{c\pi }{d}+\frac{a\pi }{b}})}-i.    
\end{eqnarray*}
we can conclude that the system has a limit cycle. 
\end{proof}

Although the previous proposition was stated for $\dot{z}=\frac{i}{z}$, it can be extended to 
the case $\dot{z}=\frac{i}{z^{n}}$, with $n$ odd. To do so, the values of $x_{0}$ and $x_{1}$ must 
be adjusted according to the value of $n$. In fact, just as in the case $\dot{z}=\frac{i}{z}$, the equation $\dot{z}=\frac{i}{z^{n}}$ also exhibits symmetry with respect to  the real axis. The arguments used in the proof remain valid in this more general setting.
\begin{cor}
    Under the hypotheses of proposition (\ref{lc3}), we have that the systems
\begin{eqnarray}\label{sistema5.1}
\begin{cases}
\dot{w}=\left(\frac{b-ai}{2}\right)(w-1)(-2i+(w-1)(-i+x_{1})), \quad \hbox{if}\quad w\in \Sigma_{+}^{E},\\
\dot{w}=\frac{i}{4}(w-1)^{3}, \quad \quad \hbox{if} \quad w\in \Sigma_{c}^{E},\\
\dot{w}=\left(\frac{d-ci}{2}\right)(w-1)(-2i+(w-1)(i-x_{0}) ), \quad \hbox{if}\quad w\in \Sigma_{-}^{E},
\end{cases}
\end{eqnarray}
and
\begin{eqnarray}\label{sistema5.2}
\begin{cases}
\dot{w}=\left(\frac{b-ai}{2}\right)(w-1)(-2iw+(w-1)(-i+x_{1})), \quad \hbox{if} \quad w\in \Sigma_{+}^{I},\\
\dot{w}=\frac{-i(w-1)^{3}}{4w}, \quad \quad \hbox{if} \quad w\in \Sigma_{c}^{I},\\
\dot{w}=\left(\frac{d-ci}{2}\right)(w-1)(-2iw+(w-1)(i-x_{0}) ), \quad \hbox{if}\quad w\in \Sigma_{-}^{I},
\end{cases}
\end{eqnarray}
  has a limit cycle.  
\end{cor}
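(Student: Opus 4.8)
The plan is to transport the limit cycle furnished by Proposition~\ref{lc5} to the two circle configurations by means of Möbius transformations, exploiting that such a map is a biholomorphism of $\overline{\mathbb{C}}$ and therefore carries orbits to orbits and isolated periodic orbits to isolated periodic orbits. By Proposition~\ref{lc5}, under the stated hypotheses the parallel-line system \eqref{sistema5} has a crossing limit cycle $\gamma$. I would use $w=\frac{z-2i}{z}$ for the externally tangent case and $w=\frac{z}{z+2i}$ for the internally tangent case, and show that the image of $\gamma$ is a limit cycle of \eqref{sistema5.1}, respectively \eqref{sistema5.2}.

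First I would check that these two maps realize the geometric correspondence described in Section~2. The map $w=\frac{z-2i}{z}$ is the inverse of the transformation taking the tangent circles $\mathbb{S}_1,\mathbb{S}_2$ to the lines $\Im(w)=\pm1$; hence it sends $\Im(z)=1$ to $\mathbb{S}_1$ and $\Im(z)=-1$ to $\mathbb{S}_2$, carrying the open regions $\Sigma_+,\Sigma_c,\Sigma_-$ exactly onto $\Sigma_+^{E},\Sigma_c^{E},\Sigma_-^{E}$ of \eqref{reg+-cE}. In the same way, $w=\frac{z}{z+2i}$ is the inverse of the transformation associated with the internally tangent circles and sends $\Sigma_+,\Sigma_c,\Sigma_-$ onto $\Sigma_+^{I},\Sigma_c^{I},\Sigma_-^{I}$ of \eqref{reg+-cI}.

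Next I would apply the Möbius transformation formula of Section~2 to obtain the transformed fields. For $w=\frac{z-2i}{z}$ one has $(a,b,c,d)=(1,-2i,1,0)$, so each zone field $f^\sigma$ is replaced by $\tfrac{(w-1)^2}{2i}\,f^\sigma\!\left(\tfrac{-2i}{w-1}\right)$; inserting the central field $f^c(z)=\tfrac{i}{z}$ gives $\tfrac{i}{4}(w-1)^3$ and the lateral affine fields give the expressions in \eqref{sistema5.1}. Repeating the computation with $(a,b,c,d)=(1,0,1,2i)$ for $w=\frac{z}{z+2i}$ reproduces \eqref{sistema5.2}, the central field becoming the stated rational multiple of $\tfrac{(w-1)^3}{w}$. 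This identifies \eqref{sistema5.1} and \eqref{sistema5.2} as the genuine Möbius images of \eqref{sistema5}.

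Finally I would conclude the existence transfer. Off its single pole each map is a diffeomorphism, so it sends the crossing points of $\gamma$ on $\Im(z)=\pm1$ to transversal crossing points on the corresponding circles and the isolated closed orbit $\gamma$ to an isolated closed orbit of the image system; the Filippov crossing character is preserved, so the image is again a crossing limit cycle. The main obstacle I anticipate is the verification in the geometric step together with a pole-avoidance check: for $w=\frac{z-2i}{z}$ the pole $z=0$ lies in $\Sigma_c$ but is a singularity of $\tfrac{i}{z}$, hence automatically off $\gamma$; for $w=\frac{z}{z+2i}$ the pole $z=-2i$ is a regular point of the lower affine field, so one must confirm that the lower arc of $\gamma$ does not pass through it, ensuring that the image remains a bounded closed curve rather than an unbounded one.
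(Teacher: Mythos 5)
Your proposal is correct and is essentially the paper's own argument: the paper gives no separate proof of this corollary, obtaining it exactly as you do by applying the Möbius changes $w=\frac{z-2i}{z}$ and $w=\frac{z}{z+2i}$ (the inverses of the maps in Section 2) to system \eqref{sistema5} via the transformation formula, so that the limit cycle of Proposition \ref{lc5} is carried over; your additional checks (region correspondence, pole avoidance, preservation of the crossing character) only make the argument more complete than the paper's. One caveat: carried out faithfully, the transformation formula yields $\dot w = +\tfrac{i}{4}\tfrac{(w-1)^{3}}{w}$ in the central zone of the internal case --- the sign consistent with the one obtained for the lateral zones and for \eqref{sistema5.1} --- so the minus sign in the paper's \eqref{sistema5.2} appears to be a typo, and your statement that the computation ``reproduces'' the system as printed should be read modulo that correction.
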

It is important to note that the conditions provided in the previous proposition are not trivial. 
In fact, by taking $n=1$, $a=c=-1$, $b=d=1$ and $x_{0}=x_{1}=1$, we obtain the existence 
of a limit cycle (as illustrated in figure \ref{img1.sela}).
\begin{figure}[!htb]\label{img3.5}
\centering
\includegraphics[scale=0.56]{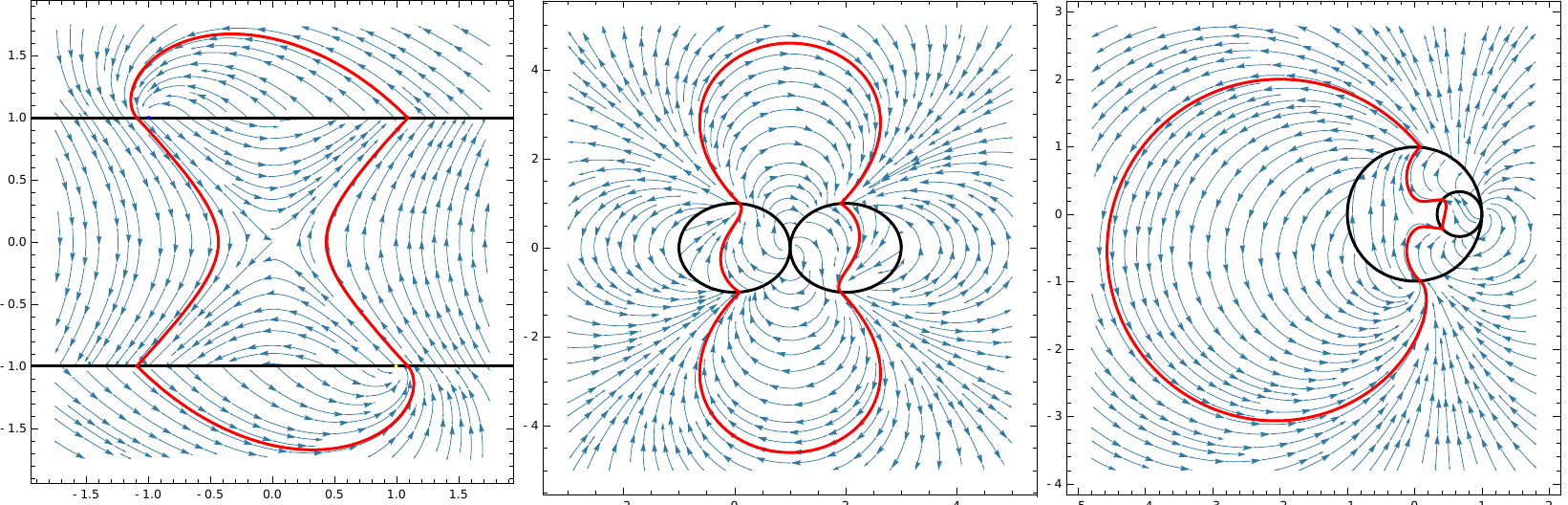}
\caption{Phase portrait of systems (\ref{sistema5}), (\ref{sistema5.1}) and (\ref{sistema5.2}) with
 $a=c=-1$, $b=d=1$ and $x_{0}=x_{1}=1$ .}
\label{img1.sela}
\end{figure}

\section{Number of limit cycles that bifurcate from linear centers}
In this section, we will present the proofs of Theorems \ref{teoa}, \ref{teob} and \ref{teoc}. However, before proceeding, it is important to note that  if $f=u+iv$ is a continuous function and $\gamma $ is a smooth curve, then 
\begin{eqnarray*}
\int_{\gamma }f(z)dz=\int_{\gamma }udx-vdy+i\int_{\gamma }udy+vdx.
\end{eqnarray*}

Hence, if we consider $i\overline{f}$, then
\begin{eqnarray*}
i\overline{f}=i(u-iv)=v+iu
\end{eqnarray*}
and 
\begin{eqnarray*}
\int_{\gamma }i\overline{f}dz=\int_{\gamma } vdx-udy+i\int_{\gamma }vdy+udx.
\end{eqnarray*}

Therefore if
\begin{equation}\label{eq10}
\dot{z}=A(z)+\varepsilon B^{\sigma}(z), \quad z\in\Sigma_{\sigma}, \quad \sigma=+,c,-\end{equation}
where $A(x+iy)=u(x,y)+iv(x,y)=H_{x}-iH_{y} $ and the unperturbed system satisfy the Assumptions $1$ and $2$ then we have:
\begin{lema}\label{melnikov- sh}
    Consider system $(\ref{eq10})$ with $\varepsilon $ sufficiently small, then the first Melnikov function is
\[
    M(h)=\Re\left(\int_{L_{h}^{-}}i\overline{B^{-}}(z)dz \right)+\Re\left(\int_{L_{h}^{C_{1}}}i\overline{B^{c}}(z) dz\right)+\]\[+
    \Re\left(\int_{L_{h}^{+}}i\overline{B^{+}}(z) dz\right)+\Re\left(\int_{L_{h}^{C_{2}}}i\overline{B^{c}}(z) dz\right).
\]  
\end{lema}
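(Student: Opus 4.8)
The plan is to derive the formula as a direct specialization of Theorem \ref{melnikov}, exploiting the fact that in \eqref{eq10} the unperturbed field is a \emph{single} holomorphic function $A$, identical in all three zones, while only the perturbation $B^{\sigma}$ changes with $\sigma$. First I would cast \eqref{eq10} in the real form \eqref{eq1(s2)}: writing $B^{\sigma}=\Re B^{\sigma}+i\,\Im B^{\sigma}$, the perturbation contributes $f^{\sigma}=\Re B^{\sigma}$ and $g^{\sigma}=\Im B^{\sigma}$, while the unperturbed part $A=u+iv$, through the relation $A=H_{x}-iH_{y}$ together with Proposition \ref{lc2}, supplies a first-integral structure whose level curves are the periodic orbits $L_{h}$. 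The decisive point is that this first integral may be chosen \emph{globally}: since $A$ does not depend on $\sigma$, one and the same $H$ serves all three regions.

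Next I would inspect the coefficients in \eqref{eq2}. Each is a product of ratios of the form $H_{x}^{\sigma}(P)/H_{x}^{\sigma'}(P)$ evaluated at the crossing points $A^{\pm}(h),B^{\pm}(h)$. Because the three regions share the same $H$, every such ratio reduces to $H_{x}(P)/H_{x}(P)=1$, so all four coefficient products collapse to $1$. This is exactly the step that turns the weighted sum \eqref{eq2} into the plain sum
\[
\begin{aligned}
M(h) &= \int_{L_{h}^{-}}(Rg^{-}dx - Rf^{-}dy) + \int_{L_{h}^{c_1}}(Rg^{c}dx - Rf^{c}dy) \\
&\quad + \int_{L_{h}^{+}}(Rg^{+}dx - Rf^{+}dy) + \int_{L_{h}^{c_2}}(Rg^{c}dx - Rf^{c}dy),
\end{aligned}
\]
after reordering the arcs to match the statement. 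I would then convert each real line integral into complex form using the identity recorded immediately before the lemma: for $B=f+ig$ one has $\Re\!\int_{\gamma} i\overline{B}\,dz=\int_{\gamma}(g\,dx-f\,dy)$. Applying this on every arc with $B=B^{\sigma}$ produces the four terms $\Re\!\int_{L_{h}^{\bullet}} i\overline{B^{\sigma}}\,dz$ in the claimed order.

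The main obstacle is the integrating factor $R$. A holomorphic field is not Hamiltonian with unit density, since its divergence equals $2\Re A'\neq0$ in general; consequently the first integral furnished by Proposition \ref{lc2} enters \eqref{eq2} with the weight $R=|A|^{-2}$, whereas the target one-form $\Re(i\overline{B^{\sigma}}\,dz)$ carries no weight. The crux is therefore to show that this weight is inessential. I would argue this by parametrizing each arc by the flow, so that $dz=A\,dt$ and hence $\Re(i\overline{B^{\sigma}}\,dz)=|A|^{2}\,\Im(B^{\sigma}/A)\,dt$, while the weighted integrand of \eqref{eq2} is precisely $R(g^{\sigma}dx-f^{\sigma}dy)=\Im(B^{\sigma}/A)\,dt$; the two differ only by the factor $|A|^{2}$.

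It then remains to control this factor. Since $\frac{d}{dt}|A|^{2}=2\,\Re(A')\,|A|^{2}$ along orbits, and the centers treated in Theorems \ref{teoa}--\ref{teoc} (linear centers $\dot z = i\alpha z+\beta$) satisfy $\Re A'=0$, the modulus $|A|$ is constant along each periodic orbit $L_{h}$. Hence the weight factors out of all four integrals as one and the same positive constant, so that the displayed identity holds up to this constant and, in particular, $M(h)$ has exactly the zeros of $\Re\!\int_{L_{h}^{-}}i\overline{B^{-}}dz+\cdots$, which is what governs the bifurcating limit cycles. This is the step I expect to require the most care, and I would state it explicitly as the hypothesis under which the clean complex form of the Melnikov function is valid.
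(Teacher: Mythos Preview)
Your first two paragraphs are exactly the intended argument and already constitute the full proof: since $A$ is the same in all three zones, the hypothesis of \eqref{eq10} supplies a single Hamiltonian $H$ with $R\equiv 1$, every ratio coefficient in \eqref{eq2} collapses to $1$, and the identity $\Re\int_\gamma i\overline{B}\,dz=\int_\gamma(\Im B\,dx-\Re B\,dy)$ displayed just before the lemma converts each arc integral to the stated complex form. The paper gives no proof beyond this setup; your outline matches it.

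Your last two paragraphs are an unnecessary detour caused by reading the wrong first integral into the hypothesis. You invoke $\Im G$ from Proposition~\ref{lc2}, which indeed comes with the weight $R=|A|^{-2}$, but that is not the $H$ named in \eqref{eq10}. The clause ``$A=H_{x}-iH_{y}$'' (up to the paper's sign convention) is precisely the assertion that $\dot z=A(z)$ is Hamiltonian with integrating factor $R\equiv 1$; for the linear centers of Theorems~\ref{teoa}--\ref{teoc} this $H$ is just $(x^{2}+y^{2})/2$ or a translate, as used explicitly in those proofs. With that choice there is no weight to cancel, and the formula holds exactly rather than only up to the positive factor $|A|^{2}(h)$ your argument produces. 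Your closing remark that such an $H$ with $R\equiv 1$ exists only when $\Re A'=0$ is correct and is the tacit restriction behind the lemma, but once it is granted the constancy-of-$|A|$ manoeuvre is superfluous: drop the last two paragraphs and the proof is complete.
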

\refstepcounter{remark}
\noindent\textbf{Remark  \theremark.}

   Note that this result can be be extended to the case where any one of the functions $B^{+}$, $B^{C}$, or $B^{-}$ is rational. To fix ideas, suppose that  $B^{+}(z)$ is a rational function, that is, $B^{+}(z)=\frac{p(z)}{q(z)}$, where $p$ and $q$ are polynomials. Furthermore, suppose that the zeros of $q$ do not lie on any periodic orbit of the family $L_{h}$. In this case, we can still apply the lemma.

    Indeed, note that
    \begin{eqnarray*}
        \dot{z}=A(z)+\varepsilon\dfrac{p(z)}{q(z)}=\dfrac{A(z)|q(z)|^{2}+\varepsilon p(z)\overline{q}(z)}{|q(z)|^{2}}.
    \end{eqnarray*}

    Since $|q(z)|^{2}$ is positive, the equation has the same phase portrait as the equation
    \begin{eqnarray}\label{nlb.o1}
       \dot{z}= A(z)|q(z)|^{2}+\varepsilon p(z)\overline{q}(z)
    \end{eqnarray}

    Note that, for $\varepsilon=0$, the system (\ref{nlb.o1}) is Hamiltonian and $R^{+}(z)=\frac{1}{|q(z)|^{2}}$ is an integrating factor. Then
 \[
\begin{aligned}
M(h) &= \Re\left(\int_{L_{h}^{-}} i\, \overline{B^{-}}(z)\, dz \right) 
+ \Re\left(\int_{L_{h}^{C_{1}}} i\, \overline{B^{c}}(z)\, dz \right) \\
&\quad + \Re\left(\int_{L_{h}^{+}} i\, \frac{\overline{p(z)}\, q(z)}{q(z)\, \overline{q(z)}}\, dz \right)
+ \Re\left(\int_{L_{h}^{C_{2}}} i\, \overline{B^{c}}(z)\, dz \right) \\
&= \Re\left(\int_{L_{h}^{-}} i\, \overline{B^{-}}(z)\, dz \right) 
+ \Re\left(\int_{L_{h}^{C_{1}}} i\, \overline{B^{c}}(z)\, dz \right) \\
&\quad + \Re\left(\int_{L_{h}^{+}} i\, \overline{B^{+}}(z)\, dz \right) 
+ \Re\left(\int_{L_{h}^{C_{2}}} i\, \overline{B^{c}}(z)\, dz \right).
\end{aligned}
\]

\renewcommand{\proofname}{Proof of Theorem \ref{teoa}}
\begin{proof}
Initially, note that for  $\varepsilon =0$ the system (\ref{eq.it1}) is Hamiltonian with
\begin{eqnarray*}
H^{+}(x,y)=H^{c}(x,y)=H^{+}(x,y)=\frac{x^{2}+y^{2}}{2}.
\end{eqnarray*}

Also note that the system (\ref{eq.it1}) has a family of periodic orbits $L_{h}=L^{-}_{h}\cup L_{h}^{c_{1}}\cup  L_{h}^{+}\cup L_{h}^{c_{2}}$ for $h\in (\frac{1}{2},\infty )$, where
\begin{eqnarray*}
& &L^{-}_{h}=\left\{ \sqrt{2h}e^{it}, \pi +\arcsin \left( \frac{1}{\sqrt{2h}}\right) \leq t\leq 2\pi-\arcsin\left( \dfrac{1}{\sqrt{2h}}\right)\right\} ,\\
& &L^{c_{1}}_{h}=\left\{ \sqrt{2h}e^{it}, -\arcsin \left( \frac{1}{\sqrt{2h}}\right) \leq t\leq \arcsin\left( \dfrac{1}{\sqrt{2h}}\right)\right\} ,\\
& &L^{+}_{h}=\left\{  \sqrt{2h}e^{it}, \arcsin \left( \frac{1}{\sqrt{2h}}\right) \leq t\leq \pi - \arcsin\left( \dfrac{1}{\sqrt{2h}}\right)\right\} ,\\
& &L^{c_{2}}_{h}=\left\{  \sqrt{2h}e^{it}, \pi-\arcsin \left( \frac{1}{\sqrt{2h}}\right) \leq t\leq \pi + \arcsin\left( \dfrac{1}{\sqrt{2h}}\right)\right\} .\\
\end{eqnarray*}

Furthermore, each one of periodic orbits $L_{h}$ intersects the straight line $\Im(z)=-1$ at two points $A^{-}(h)=\left( -\sqrt{2h-1},-1\right)$ and $B^{-}(h)=\left( \sqrt{2h-1},-1\right)$, and the straight line $\Im(z)=1$ at the points $A^{+}(h)=\left( \sqrt{2h-1},-1\right)$ and $B^{+}(h)=\left( -\sqrt{2h-1},1\right)$.

So the Assumptions $1$ and $2$ are satisfied. By Theorem \ref{melnikov}, to estimate the number of limit cycles that bifurcate from the center $\dot{z}=iz$, we need to estimate the number of zeros of the Melnikov function.

As $H^{-}(x,y)=H^{c}(x,y)=H^{+}(x,y)$, then by Lemma  \ref{melnikov- sh}, the expression for the Melnikov function is
\[
M(h)=\Re \left(\int_{L_{h}^{-}}i\overline{h^{-}}dz\right)+\Re \left(\int_{L_{h}^{c_{1}}}i\overline{h^{c}}dz\right)+\]\[+\Re \left(\int_{L_{h}^{+}}i\overline{h^{+}}dz\right)+\Re \left(\int_{L_{h}^{c_{2}}}i\overline{h^{c}}dz\right).
\]
Each integral will be handled independently. As a preliminary step, note that
\begin{eqnarray*}
i\overline{h^{-}(z)}=i\sum_{j=0}^{\ell } (a_{j}^{-}-ib_{j}^{-})\overline{z}^{j}=\sum_{j=0}^{\ell }(b_{j}^{-}+ia_{j}^{-})\overline{z}^{j}.
\end{eqnarray*}

For convenience, set $r=\sqrt{2h}$ and $\theta_{r}=\arcsin \left( \frac{1}{r}\right)$. Using the definitions, we have 

\begin{eqnarray*}
\int_{L^{-}_{h}}i\overline{h^{-}}dz&=&\int_{L_{h}^{-}}\sum_{j=0}^{\ell }(b_{j}^{-}+ia_{j}^{-})\overline{z}^{j}dz\\ &=&
\int_{\pi +\theta_{r}}^{2\pi -\theta_{r}}\left(\sum_{j=0}^{\ell }(b_{j}^{-}+ia_{j}^{-})r^{j}\overline{e^{ijt}} \right)ire^{it}dt\\
&=&\int_{\pi +\theta_{r}}^{2\pi -\theta_{r}}\left(\sum_{j=0}^{\ell }(-a_{j}^{-}+ib_{j}^{-})r^{j+1}e^{-i(j-1)t}\right)dt
\end{eqnarray*}

We define the following functions
\begin{eqnarray*}
    A_{j}^{-}(t )&=&-a_{j}^{-}\cos((j-1)t)+b_{j}^{-}\sin((j-1)t),\\
    B_{j}^{-}(t )&=&a_{j}^{-}\sin((j-1)t)+b_{j}^{-}\cos((j-1)t),\\
    C_{j}^{-}(t )&=&-a_{j}^{-}\sin((j-1)t)-b_{j}^{-}\cos((j-1)t).
\end{eqnarray*}

With this notation, we have 
\begin{eqnarray*}
    (-a_{j}^{-}+ib_{j}^{-})e^{-i(j-1)t}=A_{j}^{-}(t )+iB_{j}^{-}(t ),
\end{eqnarray*}
as a result

\begin{align*}
\int_{L_{h}^{-}} i\, \overline{h^{-}}\, dz 
&= \int_{\pi + \theta_{r}}^{2\pi - \theta_{r}} 
\left( -a_{0}^{-} r \cos t - b_{0}^{-} r \sin t - a_{1}^{-} r^{2} 
+ \sum_{j=2}^{\ell } \left( r^{j+1}A_{j}^{-}(t )\right) \right) dt \\
&\quad + i \int_{\pi + \theta_{r}}^{2\pi - \theta_{r}} 
\left( b_{0}^{-} r \cos t - a_{0}^{-} r \sin t + b_{1}^{-} r^{2} 
+ \sum_{j=2}^{\ell } \left( r^{j+1} B_{j}^{-}(t ) \right) \right) dt
\end{align*}

Therefore,
\begin{eqnarray*}
\Re \left( \int_{L_{h}^{-}}i\overline{h^{-}}dz\right)&=&\int_{\pi +\theta_{r}}^{2\pi -\theta_{r}}\left( -a_{0}^{-}r\cos t-b_{0}^{-}r\sin t -a_{1}^{-}r^{2}+\sum_{j=2}^{\ell }r^{j+1}A_{j}^{-}(t )\right) dt\\
&=&\bigg[-a_{0}^{-}r\sin t+b_{0}^{-}r\cot t-a_{1}^{-}r^{2}t+\sum_{j=2}^{\ell }\dfrac{r^{j+1}}{j-1}C_{j}^{-}(t )\bigg]_{\pi +\theta_{r} }^{2\pi -\theta_{r}}.
\end{eqnarray*}

Given that $\sin (\ell (2\pi -x))=-\sin(\ell x)$, $\cos (\ell (2\pi-x))=\cos (\ell x)$, $\sin (\ell (\pi+x))=(-1)^{\ell }\sin (\ell x)$ and $\cos (\ell(\pi+x))=(-1)^{\ell }\cos (\ell x)$, then

\begin{eqnarray}
\Re \left( \int_{L_{h}^{-}}i\overline{h^{+}}dz\right)&=&2b_{0}^{-}r\cos(\theta_{r})-a_{1}^{-}\pi r^{2}+2a_{1}^{-}r^{2}\theta_{r} 
+2\sum_{j=1}^{\lfloor \frac{ \ell -1}{2}\rfloor}\dfrac{a_{2j+1}^{-}r^{2j+2}}{2j}\sin (2j\theta_{r}) \notag \\&-&2\sum_{j=1}^{\lfloor \frac{\ell }{2}\rfloor }\dfrac{b_{2j}^{-}r^{2j+1}}{2j-1}\cos ((2j-1)\theta_{r}).\label{teoA.eq2}
\end{eqnarray}

Using that, $\sin (-\ell x)=-\sin (\ell x)$ and $\cos (-\ell x)=\cos (\ell x)$, we get
\begin{eqnarray}\label{teoA.eq3}
\Re \left(\int_{L_{h}^{c_{1}}}i\overline{h^{c}}dz\right)=-2a_{0}^{c}r\sin \theta_{r}-2a_{1}^{c}r^{2}\theta_{r}-2\sum_{j=2}^{\ell }\dfrac{a_{j}^{c}r^{j+1}}{j-1}\sin \left((j-1)\theta_{r}\right).
\end{eqnarray}

Nothing that  $\sin (\ell (\pi -x))=(-1)^{\ell +1}\sin (\ell x)$ and $\cos (\ell (\pi -x))=(-1)^{\ell }\cos (\ell x)$, we have
\begin{eqnarray}\label{teoA.eq4}
\Re\left(\int_{L_{h}^{+}}i\overline{h^{+}}dz\right)&=&-2b_{0}^{+}r\cos \theta_{r}-a_{1}^{+}\pi r^{2}+2a_{1}^{+}r^{2}\theta_{r}
+2\sum_{j=1}^{\lfloor\frac{\ell-1}{2}\rfloor}\dfrac{a^{+}_{2j+1}r^{2j+2}}{2j}\sin (2j\theta_{r})\notag \\&+&2\sum_{j=1}^{\lfloor\frac{\ell }{2}\rfloor}\dfrac{b^{+}_{2j}r^{2j+1}}{2j-1}\cos ((2j-1)\theta_{r}).
\end{eqnarray}

Finally, considering that $\sin \left(\ell (\pi -x)\right)=(-1)^{\ell +1}\sin \left (\ell \pi \right)$, $\sin\left( \ell (\pi +x)\right)=(-1)^{\ell }\sin \left( \ell x\right) $ and $\cos \left( \ell (\pi -x\right) =(-1)^{\ell }\cos \left( \ell x\right) =\cos \left( \ell (\pi +x)\right) $, we can conclude that
\begin{eqnarray}\label{teoA.eq5}
\Re \left( \int_{L_{h}^{c^{2}}}i\overline{h^{c}}dz\right) =2a_{0}^{c}r\sin \theta_{r}-2a_{1} r^{2}\theta_{r}+2\sum_{j=2}^{\ell }\dfrac{a_{j}^{c}r^{j+1}}{j-1}(-1)^{j}\sin \left( (j-1)\theta_{r}\right).
\end{eqnarray} 

Based on  (\ref{teoA.eq2}), (\ref{teoA.eq3}), (\ref{teoA.eq4}) and (\ref{teoA.eq5}), the expression of Melnikov function follows

\begin{eqnarray*}
M(r)&=&(2b_{0}^{-}-2b_{0}^{+})r\cos (\theta_{r})+(-\pi a_{1}^{-}-\pi a_{1}^{+})r^{2}+(2a_{1}^{-}-4a_{1}^{c}+2a_{1}^{+})r^{2}\theta_{r}\\ 
&+& \sum_{j=1}^{\lfloor \frac{\ell }{2} \rfloor }\dfrac{(-2b_{2j}^{-}+2b_{2j}^{+})}{2j-1}r^{2j+1}\cos ((2j-1)\theta_{r}))\\
&+&\sum_{j=1}^{\lfloor \frac{\ell -1}{2}\rfloor}\dfrac{(2a_{2j+1}^{-}+2a_{2j+1}^{+}-4a_{2j+1}^{c})}{2j}r^{2j+2}\sin (2j\theta_{r}).
\end{eqnarray*}

In particular, when $\ell =4$, we have
\begin{eqnarray*}
    M(r)=A_{1}f_{1}+A_{2}f_{2}+A_{3}f_{3}+A_{4}f_{4}+A_{5}f_{5}+A_{6}f_{6},
\end{eqnarray*}
where
\begin{eqnarray*}
    f_{1} &=& r\cos \theta_{r}, \quad f_{2} = r^{2}, \quad f_{3} = r^{2}\theta_{r}, \\
    f_{4} &=& r^{3}\cos \theta_{r}, \quad f_{5} = r^{5}\cos 3\theta_{r}, \quad f_{6} = r^{4}\sin 2\theta_{r}
\end{eqnarray*}
and
\begin{eqnarray*}
   A_{1} &=& 2b_{0}^{-} - 2b_{0}^{+}, \quad A_{2} = -\pi a_{1}^{-} - \pi a_{1}^{+}, \quad A_{3} = 2a_{1}^{-} - 4a_{1}^{c} + 2a_{1}^{+} \\
    A_{4} &=& -2b_{2}^{-} + 2b_{2}^{+}, \quad A_{5} = \frac{-2b_{4}^{-} + 2b_{4}^{+}}{2}, \quad A_{6} = \frac{2a_{3}^{-} + 2a_{3}^{+} - 4a_{3}^{c}}{2}.
\end{eqnarray*}
Note that $f_{6}=2r^{3}\cos \theta_{r}$, and it follows that $f_{6}=2f_{4}$. Therefore, $M(r)$ can be rewritten as
\begin{eqnarray*}
    M(r)=\alpha_{1}f_{1}+\alpha_{2}f_{2}+\alpha_{3}f_{3}+\alpha_{4}f_{4}+\alpha_{5}f_{5},
\end{eqnarray*}
with $\alpha_{1}=A_{1}$, $\alpha_{2}=A_{2}$, $\alpha_{3}=A_{3}$, $\alpha_{4}=A_{4}+2A_{6}$ and $\alpha_{5}=A_{5}$.

Since $W(f_{1},f_{2},f_{3},f_{4},f_{5})(2)=8$, this guarantees that the functions $f_{1},f_{2},f_{3},f_{4}$ and $f_{5}$ are linearly independent. Moreover, $f_{2} $ has constant sign on the interval $\left(\frac{1}{2},\infty \right)$. Given that the coefficients of the functions  $f_{1},f_{2},f_{3},f_{4}$ and $f_{5}$ can be chosen arbitrarily, Lemma \ref{lem1}, there exist constant that ensure $M(r)$ has $4$ simple zeros.

Therefore, system (\ref{eq.it1}) has at least $4$ limit cycles bifurcating from the periodic orbits of the center $\dot{z}=iz$.
\end{proof}

\renewcommand{\proofname}{Proof of Theorem \ref{teob}}
\begin{proof}
By making $w=-\frac{2i}{z-1}$, the system (\ref{eq.it2}) is transformed into

\begin{equation}\label{eq.sec3.1}
\dot{w}=iw+\varepsilon \tilde{h}^{\sigma}(w),  \quad w\in\Sigma_{\sigma},\quad \sigma=+,c,-,
\end{equation}
where 
\begin{equation}
        \tilde{h}^{\sigma}(w) = \sum_{j=0}^{\ell } \left( \frac{b_{j}^{\sigma} - i a_{j}^{\sigma}}{2} \right) (w - 2i)^{j} w^{2 - j}.      
\end{equation}

Next, we will manipulate the original sums to rewrite the expressions for $h^{+},h^{c}$ and $h^{-}$. By Newton's binomial theorem, we have that
\begin{eqnarray*}
    (w-2i)^{j}=\sum_{k=0}^{j}\binom{j}{k}w^{j-k}(-2i)^{k},
\end{eqnarray*}
therefore
\begin{eqnarray*}
    (w-2i)^{j}w^{2-j}=\sum_{k=0}^{j}\binom{j}{k}(-2i)^{k}w^{2-k},
\end{eqnarray*}
and thus 
\begin{eqnarray*}
   \tilde{h}^{+}(w)=\sum_{j=0}^{\ell }\left( \sum_{k=j}^{\ell }\left(\frac{b_{j}^{+} - i a_{j}^{+}}{2}\right)\binom{j}{k}(-2i)^{k}\right)w^{2-k}.
\end{eqnarray*}

Thus, we have that 
\begin{eqnarray*}
  \tilde{h}^{+}(w)=  \sum_{j=0}^{\ell }(c_{2-j}^{+}+id_{2-j}^{+})w^{2-j},\quad  
\end{eqnarray*}
where 
\begin{eqnarray*}
    (c_{2-j}^{+}+id_{2-j}^{+})=\left( (-2i)^{j}\sum_{k=j}^{\ell }\left(\frac{b_{k}^{+} - i a_{k}^{+}}{2}\right)\binom{k}{j}\right).
\end{eqnarray*}

Analogously to the obtaining of the expression for $h^{+}(w)$, the expressions for $\tilde{h}^{c}(w)$ and $\tilde{h}^{-}(w)$ can be obtained using the same procedure, with the substitution of the coefficients $a_{j}^{+},b_{j}^{+}$ by $a_{j}^{c},b_{j}^{c}$ and $a_{j}^{-},b_{j}^{-}$, respectively.

For  $\varepsilon =0$ the system (\ref{eq.sec3.1}) is Hamiltonian with
\begin{eqnarray*}
H^{+}(x,y)=H^{c}(x,y)=H^{+}(x,y)=\frac{x^{2}+y^{2}}{2}.
\end{eqnarray*}

In addition, the system (\Ref{eq.sec3.1}) has a family of periodic orbits $L_{h}=L^{-}_{h}\cup L_{h}^{c_{1}}\cup  L_{h}^{+}\cup L_{h}^{c_{2}}$ for $h\in (\frac{1}{2},\infty )$, where
\begin{eqnarray*}
& &L^{-}_{h}=\left\{  \sqrt{2h}e^{it}, \pi +\arcsin \left( \frac{1}{\sqrt{2h}}\right) \leq t\leq 2\pi-\arcsin\left( \dfrac{1}{\sqrt{2h}}\right)\right\} ,\\
& &L^{c_{1}}_{h}=\left\{ \sqrt{2h}e^{it}, -\arcsin \left( \frac{1}{\sqrt{2h}}\right) \leq t\leq \arcsin\left( \dfrac{1}{\sqrt{2h}}\right)\right\} ,\\
& &L^{+}_{h}=\left\{ \sqrt{2h}e^{it}, \arcsin \left( \frac{1}{\sqrt{2h}}\right) \leq t\leq \pi - \arcsin\left( \dfrac{1}{\sqrt{2h}}\right)\right\} ,\\
& &L^{c_{2}}_{h}=\left\{ \sqrt{2h}e^{it}, \pi-\arcsin \left( \frac{1}{\sqrt{2h}}\right) \leq t\leq \pi + \arcsin\left( \dfrac{1}{\sqrt{2h}}\right)\right\} .\\
\end{eqnarray*}

Furthermore, each one of periodic orbits $L_{h}$ intersects the straight line $\Im(w)=-1$ at two points $A^{-}(h)=\left( -\sqrt{2h-1},-1\right)$ and $B^{-}(h)=\left( \sqrt{2h-1},-1\right)$, and the straight line $\Im(w)=1$ at the points $A^{+}(h)=\left( \sqrt{2h-1},-1\right)$ and $B^{+}(h)=\left( -\sqrt{2h-1},1\right)$.

So the Assumptions $1$ and $2$ are satisfied. By Theorem \ref{melnikov}, to estimate the number of limit cycles that bifurcate from the center $\dot{z}=iz$, we need to estimate the number of zeros of the Melnikov function.

As $H^{-}(x,y)=H^{c}(x,y)=H^{c}(x,y)$, then by Lemma  \ref{melnikov- sh}, the expression for the Melnikov function is
\begin{eqnarray*}
M(h)=\Re \left(\int_{L_{h}^{-}}i\overline{\tilde{h}^{-}}dz\right)+\Re \left(\int_{L_{h}^{c_{1}}}i\overline{\tilde{h}^{c}}dz\right)+\Re \left(\int_{L_{h}^{+}}i\overline{\tilde{h}^{+}}dz\right)+\Re \left(\int_{L_{h}^{c_{2}}}i\overline{\tilde{h}^{c}}dz\right).
\end{eqnarray*}

We will calculate each of the integrals separately, but first, note that
\begin{eqnarray*}
    i\overline{\tilde{h}^{-}(z)}=\sum_{j=0}^{\ell }(d_{2-j}^{-}+ic_{2-j}^{-})\overline{w}^{2-j}.
\end{eqnarray*}

To simplify the writing, we will set  $r =\sqrt{2h}$ and  $\theta_{r}=\arcsin \left( \frac{1}{r}\right)$, we have that
\begin{eqnarray*}
    \int_{L^{-}_{h}}i\overline{\tilde{h}^{-}}dw&=&\int_{L_{h}^{-}}\sum_{j=0}^{\ell }(d_{2-j}^{-}+ic_{2-j}^{-})\overline{w}^{2-j}dw\\
    &=&\int_{\pi+\theta_{r}}^{2\pi -\theta_{r}}\left( \sum_{j=0}^{\ell }(d_{2-j}^{-}+ic_{2-j}^{-})r^{2-j}e^{-i(2-j)t}\right)ire^{it}dt\\
    &=&\int_{\pi+\theta_{h}}^{2\pi -\theta_{r}}\left( \sum_{j=0}^{\ell }(-c_{2-j}^{-}+id_{2-j}^{-})r^{3-j}e^{-i(1-j)t} \right)dt .
\end{eqnarray*}

We define the following functions:
\begin{eqnarray*}
    A_{2-j}^{-}(t )&=&-c_{2-j}^{-}\cos ((1-j)t)+d_{2-j}^{-}\sin ((1-j)t ),\\
    B_{2-j}^{-}(t)&=&c_{2-j}^{-}\sin ((1-j)t)+d_{2-j}^{-}\cos ((1-j)t),\\
    C_{2-j}^{-}(t )&=& -c_{2-j}^{-}\sin ((j-1)t)+d_{2-j}^{-}\cos ((j-1)t).
\end{eqnarray*}

With this notation, we have
\begin{align*}
(-c_{2-j}^{-} + i d_{2-j}) e^{-i(1-j)t} 
&=A_{2-j}^{-}(t)+iB_{2-j}^{-}(t),
\end{align*}

as a result,
\begin{eqnarray*}
    \Re \left( \int_{L_{h}^{-}}i\overline{\tilde{h}^{-}}dw \right)&=&\int_{\pi+\theta_{r}}^{2\pi -\theta_{r}}\left(\sum_{j=0}^{\ell } r^{3-j}A_{2-j}^{-}(t) \right)dt\\
   &=& \int_{\pi+\theta_{r}}^{2\pi -\theta_{r}}\left(r^{3}A_{2}^{-}(t)+r^{2}A_{1}^{-}(t)+\sum_{j=2}^{\ell } r^{3-j}A_{2-j}^{-} (t)\right)dt\\
   &=&\left[-c_{2}^{-}r^3\sin t - d_{2}^{-}r^{3}\cos t-c_{1}^{-}r^{2}t+\sum_{j=2}^{\ell }\dfrac{C_{2-j}^{-}(t)}{j-1}r^{3-j}\right]_{\pi +\theta_{r}}^{2\pi -\theta_{r}}
\end{eqnarray*}

Using the identities $\sin (\ell (2\pi -x))=-\sin(\ell x)$, $\cos (\ell (2\pi-x))=\cos (\ell x)$, $\sin (\ell (\pi+x))=(-1)^{\ell }\sin (\ell x)$ and $\cos (\ell (\pi+x))=(-1)^{\ell }\cos (\ell x)$, we obtain
\begin{eqnarray*}
\Re \left( \int_{L_{h}^{-}}i\overline{\tilde{h}^{-}}dw\right)&=&-2d_{2}^{-}r^{3}\cos (\theta_{r})-c_{1}^{-}r^{2}(2\theta_{r}-\pi )+\sum_{j=1}^{\lfloor \frac{ \ell -1}{2}\rfloor}\dfrac{c_{1-2j}}{j}r^{2-2j}\sin(2j\theta_{r})\\ &+&\sum_{j=1}^{\lfloor \frac{\ell }{2}\rfloor}\dfrac{2d_{2-2j}^{-}}{2j-1}r^{3-2j}\cos ((2j-1)\theta_{r}).
\end{eqnarray*}

Now making use of the identities $\sin (-\ell x)=-\sin (\ell x)$ and $\cos (-\ell x)=\cos (\ell x)$, follows that 
\begin{eqnarray*}
    \Re \left( \int_{L_{h}^{c_{1}}}i\overline{\tilde{h}^{C}}dw\right)&=&-2c_{2}^{C}r^{3}\sin (\theta_{r} )+2c_{1}^{C}r^{2} \theta_{r}-2\sum_{j=2}^{\ell }\dfrac{c_{2-j}^{C}}{j-1}r^{3-j}\sin((j-1)\theta_{r}).
\end{eqnarray*}

In order to compute the next integral, we now use the identities $\sin (\ell (\pi -x))=(-1)^{\ell +1}\sin (\ell x)$ and $\cos (\ell (\pi -x))=(-1)^{\ell }\cos (\ell x)$. Applying these, we obtain
\begin{eqnarray*}
     \Re \left( \int_{L_{h}^{+}}i\overline{\tilde{h}^{+}}dw\right)&=&2d_{2}^{+}r^{3}\cos(\theta_{r})-c_{1}^{+}r^{2}(2\theta_{r}-\pi)+\sum_{j=1}^{\lfloor \frac{ \ell -1}{2}\rfloor}\dfrac{c_{1-2j}^{+}}{j}r^{2-2j}\sin(2j\theta_{r})\\
     &-&\sum_{j=1}^{\lfloor \frac{\ell }{2}\rfloor}\dfrac{2d_{2-2j}^{+}}{2j-1}r^{3-2j}\cos ((2j-1)\theta_{r}).
\end{eqnarray*}

Finally, to compute the last integral, we make use of the identities $\sin \left(\ell (\pi -x)\right)=(-1)^{\ell+1}\sin \left (\ell \pi \right)$, $\sin\left( \ell (\pi +x)\right)=(-1)^{\ell }\sin \left( \ell x\right) $ and $\cos \left( \ell (\pi -x\right) =(-1)^{\ell }\cos \left( \ell x\right) =\cos \left( \ell (\pi +x)\right) $. With these, we find
\begin{eqnarray*}
    \Re \left( \int_{L_{h}^{c_{2}}}i\overline{\tilde{h}^{C}}dw\right)&=&2c_{2}^{C}r^{3}\sin (\theta_{r})+2c_{1}^{C}r^{2}\theta_{r}+2\sum_{j=2}^{\ell }\dfrac{c_{2-j}^{C}}{j-1}(-1)^{j}r^{3-j}\sin((j-1)\theta_{r}).
\end{eqnarray*}

Therefore, the Melnikov function is given by
\begin{eqnarray*}
    M(r)&=&D_{2}r^{3}\cos (\theta_{r})+C_{1}r^{2}(2\theta_{r}-\pi )-2D_{1}r^{2}\theta_{r}+\sum_{j=1}^{\lfloor \frac{ \ell -1}{2}\rfloor}C_{1-2j}r^{2-2j}\sin(2j\theta_{r})\\ &+&\sum_{j=1}^{\lfloor \frac{\ell }{2}\rfloor} D_{2-2j}r^{3-2j}\cos((2j-1)\theta_{r})
\end{eqnarray*}
where
\begin{align*}
    D_{2} &= -2d_{2}^{-} + 2d_{2}^{+}, & 
    D_{1} &= -c_{1}^{-} - c_{1}^{+}, & 
    C_{1} &= -4c_{1}, \\
    C_{1-2j} &= \frac{c_{1-2j}^{-} + c_{1-2j}^{+} - 2c_{1-2j}^{c}}{j}, & 
    D_{1-2j} &= \frac{2d_{2-2j}^{-} - 2d_{2-2j}^{+}}{2j - 1}.
\end{align*}

In particular, when $\ell =4$, we have
\begin{eqnarray*}
    M(r)=D_{2}f_{1}+D_{1}f_{2}+D_{1}f_{3}+C_{-1}f_{4}+D_{0}f_{5}+D_{-2}f_{6}
\end{eqnarray*}
where
\begin{align*}
    f_{1} &= r^{3}\cos(\theta_{r}), & f_{2} &= r^{2}(2\theta_{r}-\pi), & f_{3} &= r^{2}\theta_{r}, \\
    f_{4} &= \sin(2\theta_{r}), & f_{5} &= r\cos(\theta_{r}), & f_{6} &= r^{-1}\cos(3\theta_{r}).
\end{align*}
We have $W(f_{1},\dots,f_{6})(2)=-\frac{\sqrt{3}\pi}{64}\neq0$. Hence, the functions $f_{1},\dots ,f_{6}$ are linearly independent. Moreover, $f_{3}$ has a constant sign in the interval $\left( \frac{1}{2},\infty \right)$, and the coefficients of $f_{1},\dots ,f_{6}$ can be freely chosen. As a result, Lemma \ref{lem1} guarantees that it is possible to choose the coefficients in such a way that $M(r)$ has at least $5$ simple zero. Consequently, system (\ref{eq.sec3.1}) has at least $5$ limit cycles bifurcating from the center $\dot{w}=iw$.

Thus, system (\ref{eq.it2}) has at least $5$ limit cycles bifurcating from the center $\dot{z}=-i(z-1)$.
\end{proof}
\renewcommand{\proofname}{Proof of Theorem \ref{teoc}}
\begin{proof}
With the change of variable $w=\frac{-2zi}{z-1}$, we rewrite system (\ref{eq.it3}) as
\begin{equation}\label{eq.sec3.2}
\dot{w}=-i(w+2i)+\varepsilon \tilde{h}^{\sigma}(w), \quad w\in\Sigma_{\sigma},\quad\sigma=+,c,-
\end{equation}
where 
\begin{equation*}
        \tilde{h}^{\sigma }(w) = \sum_{j=0}^{3} \left( \frac{b_{j}^{\sigma} - i a_{j}^{\sigma}}{2} \right) w^{j} (w+2i)^{2 - j}, 
 \end{equation*}
Note that, for $\varepsilon =0$,  system (\ref{eq.sec3.2}) is Hamiltonian with
\begin{eqnarray*}
    H^{+}(x,y)=H^{c}(x,y)=H^{-}(x,y)=\dfrac{x^2+(y+2)^2}{2}.
\end{eqnarray*}

Moreover, system (\ref{eq.sec3.2})  has two families of periodic orbits. The first one cross the line $\Im(w)=-1$, denoted by $l_{h}=l_{h}^{-}\cup l_{h}^{c}$ for $h\in (\frac{1}{2},\frac{9}{2})$, were
\begin{eqnarray*}
& &l^{-}_{h}=\left\{ \sqrt{2h}e^{it}-2i, \pi -\arcsin \left( \frac{1}{\sqrt{2h}}\right) \leq t\leq 2\pi+\arcsin\left( \dfrac{1}{\sqrt{2h}}\right)\right\} ,\\
& &l^{c}_{h}=\left\{ \sqrt{2h}e^{it}-2i , \arcsin \left( \frac{1}{\sqrt{2h}}\right) \leq t\leq \pi-\arcsin\left( \dfrac{1}{\sqrt{2h}}\right)\right\} .
\end{eqnarray*}

The second family crosses the lines $\Im(w)=1$ and $\Im(w)=-1$, and is given by $L_{h}=L^{-}_{h}\cup L_{h}^{c_{1}}\cup  L_{h}^{+}\cup L_{h}^{c_{2}}$ for $h\in (\frac{9}{2},\infty )$, where 
\begin{eqnarray*}
& &L^{-}_{h}=\left\{ \sqrt{2h}e^{it}-2i, \pi -\arcsin \left( \frac{1}{\sqrt{2h}}\right) \leq t\leq 2\pi+\arcsin\left( \dfrac{1}{\sqrt{2h}}\right)\right\} ,\\
& &L^{c_{1}}_{h}=\left\{ \sqrt{2h}e^{it}-2i , \arcsin \left( \frac{1}{\sqrt{2h}}\right) \leq t\leq \arcsin\left( \dfrac{3}{\sqrt{2h}}\right)\right\},\\
& &L^{-}_{h}=\left\{ \sqrt{2h}e^{it}-2i, \arcsin \left( \frac{3}{\sqrt{2h}}\right) \leq t\leq \pi-\arcsin\left( \dfrac{3}{\sqrt{2h}}\right)\right\} ,\\
& &L^{c_{1}}_{h}=\left\{ \sqrt{2h}e^{it}-2i ,\pi- \arcsin \left( \frac{3}{\sqrt{2h}}\right) \leq t\leq \pi-\arcsin\left( \dfrac{1}{\sqrt{2h}}\right)\right\}.
\end{eqnarray*}

We consider two distinct families of periodic orbits of systems (\ref{eq.sec3.2}). The first family intersects the line $\Im(w)=-1$ at $a^{-}(h)=(-\sqrt{2h-1},-1)$ and $b^{-}(h)=(\sqrt{2h-1},-1)$. The second family intersects the line $\Im(w)=-1$ at $A^{-}(h)=(-\sqrt{2h-1},-1)$, $B^{-}(h)=(\sqrt{2h-1},-1)$ and the line $\Im(w)=1$ at $A^{+}(h)=(-\sqrt{2h-9},1)$, $B^{+}(h)=(\sqrt{2h-9},1)$.

The Assumptions $1$ and $2$ of Theorem \ref{melnikov} are satisfied for the family of periodic orbits that crosses both lines $\Im(w)=1$ and $\Im(w)=-1$, while for the family of orbits that crosses only the line $\Im(w)=-1$, the assumptions of Remark \ref{obs.1} are satisfied.

To estimate the number of limit cycles bifurcating from the center $\dot{w}=-i(w+2i)$, we need to analyze the Melnikov functions associated with each of families. Since $H^{+}(x,y)=H^{C}(x,y)=H^{-}(x,y)$, we have the following expressions:

For the family that crosses both lines, the expression for the Melnikov functions is given by
\begin{eqnarray*}
M(h)=\Re \left(\int_{L_{h}^{-}}i\overline{\tilde{h}^{-}}dz\right)+\Re \left(\int_{L_{h}^{c_{1}}}i\overline{\tilde{h}^{c}}dz\right)+\Re \left(\int_{L_{h}^{+}}i\overline{\tilde{h}^{+}}dz\right)+\Re \left(\int_{L_{h}^{c_{2}}}i\overline{\tilde{h}^{c}}dz\right).
\end{eqnarray*}

For the family that crosses only the line $\Im(w)=-1$, the Melnikov function is given by
\begin{eqnarray*}
    M_{1}(h)=\Re \left(\int_{l_{h}^{-}}i\overline{\tilde{h}^{-}}dz\right)+\Re \left(\int_{l_{h}^{c}}i\overline{\tilde{h}^{c}}dz\right).
\end{eqnarray*}

We will work with each of the functions separately. We will star with $M_{1}$, we will also need computed each of the integrals separately, observe that
\begin{eqnarray*}
    i\overline{\tilde{h}^{-}}(w)=\frac{\overline{w}^{3}(w+2i)}{|w+2i|^{2}} (A_{3}^{-}+iB_{3}^{-})+\sum_{j=0}^{2}(A_{j}^{-}+iB_{j}^{-})\overline{w}^{j},
\end{eqnarray*}
where
\begin{eqnarray*}
  & &A_{3}=\frac{b_{3}^{-}}{2},\: B_{3}=\frac{a_{3}^{-}}{2},\:   A_{2}^{-}=\frac{-(a_{0}^{-}+a_{1}^{-}+a_{2}^{-})}{2},\: B_{2}^{-}=\frac{b_{0}^{-}+b_{1}^{-}+b_{2}^{-}}{2},\\
  & &A_{1}=\frac{4b_{0}^{-}+2b_{1}^{-}}{2},\: B_{1}^{-}=\frac{4a_{0}^{-}+2a_{1}}{2}, \: A_{0}^{-}=4a_{0},\: B_{0}^{-}=-4b_{0}^{-}.
\end{eqnarray*}

To simplify the writing, we set $r=\sqrt{2h}$, $\theta^{1}_{r}=\arctan(\frac{1}{r})$, and  $\theta^{2}_{r}=\arctan(\frac{3}{r})$, we have that
\begin{eqnarray*}
    \int_{l_{h}^{-}}i\overline{\tilde{h}^{-}}(w)dw&=&\int_{l_{h}^{-}}\left( \frac{\overline{w}^{3}(w+2i)}{|w+2i|^{2}} (A_{3}^{-}+iB_{3}^{-})+\sum_{j=0}^{2}(A_{j}^{-}+iB_{j}^{-})\overline{w}^{j}\right) dw\\
    &=&\int_{\pi -\theta_{r}}^{2\pi +\theta_{r}}\left[ (A_{3}^{-}+iB_{3}^{-})\frac{(re^{-it}+2i)^{3}e^{it}}{r}+\sum_{j=0}^{2}(A_{j}^{-}+iB_{j}^{-})(re^{-it}+2i)^{j}\right]ire^{it}dt\\
    &=&\int_{\pi -\theta_{r}}^{2\pi +\theta_{r}} \left[ r^{3}e^{-it}(C_{3}^{-}+iD_{3}^{-}) + r^{2}(C_{2}^{-}+iD_{2}^{-}) \right. \\
   &&\left.+ re^{it}(C_{1}^{-}+iD_{1}^{-}) + e^{2it}(C_{0}+iD_{0}) \right] dt  
\end{eqnarray*}

where
\begin{eqnarray*}
  C_{3}^{-} &=& -B_{3}^{-}-B_{2}^{-}, \quad D_{3}^{-} = A_{3}^{-}+A_{2}^{-}, \quad C_{2}^{-} = -6A_{3}^{-}-4A_{2}^{-}-B_{1}^{-}, \\
  D_{2}^{-} &=& -6B_{3}^{-}-4B_{2}^{-}+A_{2}^{-}, \quad C_{1}^{-} = 12B_{3}^{-}+4B_{2}^{-}-2A_{1}^{-}-B_{0}^{-}, \\
  D_{1}^{-} &=& -12A_{3}^{-}-4A_{2}^{-}-2B_{1}^{-}+A_{0}^{-}, \quad C_{0}^{-} = 8A_{3}^{-}, \quad D_{0}^{-} = 8B_{3}^{-}.
\end{eqnarray*}

Thus,
\begin{eqnarray*}
    \Re \left( \int_{l_{h}^{-}}i\overline{\tilde{h}^{-}}(w)dw\right)
    &=& \int_{\pi -\theta_{r}}^{2\pi +\theta_{r}} \left[ C_{3}^{-}r^{3}\cos t + D_{3}^{-}r^{3}\sin t  +  C_{2}^{-}r^{2}\right. \\
    && \left.  + C_{1}^{-}r\cos t - D_{1}^{-}r\sin t + C_{0}^{-}\cos 2t - D_{0}^{-}\sin 2t \right] dt\\
    &=& \left[ C_{3}^{-}r^{3}\sin t - D_{3}^{-}r^{3}\cos t + C_{2}^{-}r^{2}t + C_{1}^{-}r\sin t \right. \\
    && \left. + D_{1}^{-}r\cos t + \frac{C_{0}^{-}}{2}\sin 2t + \frac{D_{0}^{-}}{2}\cos 2t \right]_{\pi -\theta_{r}}^{2\pi +\theta_{r}}
\end{eqnarray*}

Using that $\cos\left(2\pi+x\right)=\cos x$, $\cos\left(\pi-x\right)=-\cos x$ and $\sin\left(2\pi+x\right)=\sin\left(\pi-x\right)=\sin x$, we obtain that 
\begin{eqnarray*}
 \Re \left( \int_{l_{h}^{-}}i\overline{\tilde{h}^{-}}(w)dw\right)=-2D_{3}^{-}r^{3}\cos \theta_{r}^{1}+C_{2}^{-}r^{2}(\pi +2\theta_{r}^{1})+2D_{1}^{-}r\cos \theta_{r}^{1}+C_{0}^{-}\sin 2\theta_{r}^{1}
\end{eqnarray*}
and
\begin{eqnarray*}
    \Re\left( \int_{l_{h}^{c}}i\overline{\tilde{h}^{c}}(w)dw\right)&=&2D_{3}^{c}r^{3}\cos \theta_{r}^{1}+C_{2}^{c}r^{2}(\pi-2\theta_{r}^{1})-2D_{1}^{c}r\cos \theta_{r}^{1}-C_{0}^{c}\sin 2\theta_{r}^{1}.
\end{eqnarray*}

Therefore,
\begin{eqnarray*}
    M_{1}(r)&=&(2D_{3}^{c}-2D_{3}^{-})r^{3}\cos \theta_{r}^{1}+C_{2}^{-}r^{2}(\pi +2\theta_{r}^{1})+C_{2}^{c}r^{2}(\pi -2\theta_{r}^{1})+(2D_{1}^{-}-2D_{1}^{c})r\cos \theta_{r}^{1}\\
    &+&(c_{0}^{-}-C_{0}^{c})\sin 2\theta_{r}^{1}.
\end{eqnarray*}

Note that the functions $f_{1}=r^{3}\cos \theta_{r}^{1}$, $f_{2}=r^{2}(\pi +2\theta_{r}^{1})$, $f_{3}=r^{2}(\pi -2\theta_{r}^{1})$, $f_{4}=r\cos \theta_{r}^{1}$ and $f_{5}=\sin 2\theta_{r}^{1}$ are linearly independent functions, since $W(f_{1},f_{2},f_{3},f_{4},f_{5})(1)=-384\pi \neq 0$. Moreover $f_{2}$ has constant sign on the $\left(\frac{1}{2},\frac{9}{2} \right)$. Since the coefficients of the functions $f_{1},...f_{5}$ can be  chosen arbitrarily, by Lemma \ref{lem1}, there exist constants such that $M_{1}(r)$  has at least $4$ simple zeros.

Having completed the analysis of $M_{1}$, we now turn to $M(r)$. We have that
\begin{eqnarray*}
    \Re\left( \int_{L_{h}^{-}}i\overline{\tilde{h}^{-}}(w)dw\right)&=&-2D_{3}^{-}r^{3}\cos \theta_{r}^{1}+C_{2}^{-}r^{2}(\pi +2\theta_{r}^{1})+2D_{1}^{-}r\cos \theta_{r}^{1}+C_{0}^{-}\sin 2\theta_{r}^{1},\\
    \Re\left( \int_{L_{h}^{c_{1}}}i\overline{\tilde{h}^{c}}(w)dw\right)&=& D_{3}^{c}r^{3}(\cos \theta_{r}^{1}-\cos \theta_{r}^{2})+C_{3}^{c}r^{3}(-\sin \theta_{r}^{1}+\sin \theta_{r}^{2})+C_{2}^{c}r^{2}(-\theta_{r}^{1}+\theta_{r}^{2})\\
    &-&D_{1}^{c}r(\cos \theta_{r}^{1}-\cos \theta_{r}^{2})+C_{1}^{c}r(-\sin \theta_{r}^{1}+\sin \theta_{r}^{2})-\frac{D_{0}^{c}}{2}(\cos 2\theta_{r}^{1}-\cos 2\theta_{r}^{2})\\
    &+&\frac{C_{0}^{c}}{2}(-\sin 2\theta_{r}^{1}+\sin 2\theta_{r}^{2}),\\
    \Re\left( \int_{L_{h}^{+}}i\overline{\tilde{h}^{+}}(w)dw\right)&=&2D_{3}^{+}r^{3}\cos \theta_{r}^{2}+C_{2}^{+}r^{2}(\pi-2\theta_{r}^{2})-2D_{1}^{+}r\cos \theta_{r}^{2}-C_{0}^{+}\sin 2\theta_{r}^{2},\\
    \Re\left( \int_{L_{h}^{c_{2}}}i\overline{\tilde{h}^{c}}(w)dw\right)&=&D_{3}^{c}r^{3}(\cos \theta_{r}^{1}-\cos \theta_{r}^{2})+C_{3}^{c}r^{3}(\sin \theta_{r}^{1}-\sin \theta_{r}^{2})+C_{2}^{c}r^{2}(-\theta_{r}^{1}+\theta_{r}^{2})\\
    &-&D_{1}^{c}r(\cos \theta_{r}^{1}-\cos \theta_{r}^{2})+C_{1}^{c}r(\sin \theta_{r}^{1}-\sin \theta_{r}^{2})+\frac{D_{0}^{c}}{2}(\cos 2\theta_{r}^{1}-\cos 2\theta_{r}^{2})\\
    &+&\frac{C_{0}^{c}}{2}(-\sin 2\theta_{r}^{1}+\sin 2\theta_{r}^{2})
\end{eqnarray*}

Therefore
\begin{eqnarray*}
    M(r)=\alpha_{1}g_{1}+\alpha_{2}g_{2}+\alpha_{3}g_{3}+\alpha_{4}g_{4}+\alpha_{5}g_{5}+\alpha_{6}g_{6}+\alpha_{7}g_{7}+\alpha_{8}g_{8}+\alpha_{9}g_{9},
\end{eqnarray*}
with
\begin{alignat*}{3}
g_{1} &= r^{3}\cos \theta_{r}^{1}, \quad & g_{2} &= r^{2}(\pi + 2\theta_{r}^{1}), \quad & g_{3} &= r\cos \theta_{r}^{1}, \\
g_{4} &= \sin 2\theta_{r}^{1}, \quad & g_{5} &=  r^{3}\cos \theta_{r}^{2}\quad & g_{6} &=  r^{2}(\pi - 2\theta_{r}^{2})\\
g_{7} &=  r\cos \theta_{r}^{2}, \quad & g_{8} &= \sin 2\theta_{r}^{2}, \quad & g_{9} &= r^{2}(-\theta_{h}^{1}+\theta_{r}^{2}),
\end{alignat*}
and
\begin{alignat*}{3}
\alpha _{1} &=-2D_{3}^{-}+2D_{3}^{c}, \quad & \alpha_{2} &=C_{2}^{-}, \quad & \alpha_{3} &= 2D_{1}^{-}-2D_{1}^{c}, \\
\alpha_{4} &= C_{0}^{-}-C_{0}^{c}, \quad & \alpha_{5} &= 2D_{3}^{+}-2D_{3}^{c} ,\quad & \alpha_{6} &=  C_{2}^{+},\\
\alpha_{7} &= - 2D_{1}^{+}+2D_{1}^{c}, \quad & \alpha_{8} &=- C_{0}^{+}+C_{0}^{c}, \quad & \alpha_{9} &= 2C_{2}^{c}.
\end{alignat*}

We have that $W(g_{1},\cdots ,g_{9})(5)\approx 0.55155$, which implies that the functions $g_{1},...,g_{9}$ are linearly independent. Furthermore, $g_{2}$ has a constant sign in $\left(\frac{9}{2},\infty \right)$. Therefore, $M$ has at least $8$ simple zeros.
\end{proof}

\section{Maximum number of limit cycles of certain piecewise holomorphic systems}
\renewcommand{\proofname}{Proof of statement (a) of Theorem \ref{teod}}
\begin{proof}
Based on Proposition \ref{lc2}, it follows that the solutions of the equations  $\dot{z}=\frac{1}{z_{1}^{+}(z-z_{0}^{+})}$, $\dot{z}=\frac{1}{z_{1}^{-}(z-z_{0}^{-})}$ and $\dot{z}=\frac{1}{\sum_{j=0}^{n}z_{j}^{c}z^{j}}$ with $z_{1}^{\pm}=a^{\pm }+ib^{\pm} $ and $z_{0}^{\pm }=x_{0}^{\pm }+iy_{0}^{\pm }$ are contained in the level curves of the functions
\begin{eqnarray*}
    H^{+}(x,y)&=&\dfrac{-b^{+}x^{2}-2a^{+}xy+b^{+}y^{2}}{2}+(-b^{+}y_{0}^{+}+a^{+}x^{+}_{0})y+(a^{+}y_{0}^{+}+b^{+}x_{0}^{+})x,\\
    H^{-}(x,y)&=&\dfrac{-b^{-}x^{2}-2a^{-}xy+b^{-}y^{2}}{2}+(-b^{-}y_{0}^{-}+a^{-}x^{-}_{0})y+(a^{-}y_{0}^{-}+b^{-}x_{0}^{-})x,\\
    H^{C}(x,y)&=&p^{c}(x,y),
\end{eqnarray*}
respectively, with $p(x,y)$  a polynomial of degree $n+1$. The polynomial $p$ is defined as the imaginary part of the anti-derivate of the $p^{c}(z)$. Its explicit form can be obtained by applying the Binomial Theorem to expand the powers of $z.$

 Suppose that the system $(C1)$ has a limit cycle $\gamma $ that intersects the line $\Im (z)=-1$ at the points $(s_{1},-1)$ and $(s_{2},-1)$, with $s_{1}\neq s_{2}$, and intersects the line $\Im (z)=1$ at the points $(t_{1},1)$ and $(t_{2},1)$, with $t_{1}\neq t_{2}$. We then have
 \begin{equation}\label{eq1.teod}
    \begin{cases}
        H^{-}(s_{1},-1)-H^{-}(s_{2},-1)=0,\\
        H^{C}(s_{2},-1)-H^{C}(t_{2},1)=0,\\
        H^{+}(t_{2},1)-H^{+}(t_{1},1)=0,\\
        H^{C}(t_{1},1)-H^{C}(s_{1},-1)=0.\\
    \end{cases}
\end{equation}

From the first equation of (\ref{eq1.teod}), we have that
\begin{eqnarray*}
    \frac{b^{-}}{2}(s_{2}^{2}-s_{1}^{2})-a^{-}(s_{2}-s_{1})-(a^{-}y_{0}^{-}+b^{-}x_{0}^{-})(s_{2}-s_{1})=0.
\end{eqnarray*}

Since $s_{1}\neq s_{2}$, we can divide the expression adobe by $s_{2}-s_{1}$, obtaining
\begin{eqnarray*}
    \frac{b^{-}}{2}(s_{2}+s_{1})-a^{-}-(a^{-}y_{0}^{-}+b^{-}x_{0}^{-})=0,
\end{eqnarray*}
it then follows that
\begin{eqnarray}\label{eq2.teod}
    s_{2}=\frac{2(((a^{-}y_{0}^{-}+b^{-}x_{0}^{-})+a^{-})}{b^{-}}-s_{1}.
\end{eqnarray}

Similarly, from the third equation of (\ref{eq1.teod}), we  obtain
\begin{eqnarray}\label{eq3.teod}
    t_{2}=\frac{2(((a^{+}y_{0}^{+}+b^{+}x_{0}^{+})+a^{+})}{b^{+}}-t_{1}.
\end{eqnarray}

It is important to note that that the second and fourth equations of (\ref{eq1.teod}) represent the same polynomial of $n+1$, but expressed in different sets of variables, the second equation is in $s_{2}$ and $t_{2}$, while the fourth equation is in $s_{1}$ and $t_{1}$. 

By substituting the expressions for $s_{2}$ and $t_{2}$, obtained in equations (\ref{eq2.teod}) and (\ref{eq3.teod}), respectively, into the second equation of (\ref{eq1.teod}), we obtain a new equation solely in terms of $s_{1}$ and $t_{1}$. Consequently, the original system can be reduced to one involving only these two variables.

By subtracting the second equation from the fourth equation of the system (\ref{eq1.teod}), we obtain an equivalent system in which the new second equation is a polynomial of degree $n$, while the forth equation remains unchanged, with degree $n+1$. Applying Bézout's Theorem (see \cite{refb25}) to these two equations, we conclude that, when the system has finitely many solutions, the maximum number of solutions is $n(n+1)$.

Note that if $(x_{0},y_{0})$ is a solution  of the system formed by the second and fourth equation of (\ref{eq1.teod}), then the point 
\begin{eqnarray*}
    (x_{1},y_{1})=\left(\frac{2(((a^{-}y_{0}^{-}+b^{-}x_{0}^{-})+a^{-})}{b^{-}}-x_{0},\frac{2(((a^{+}y_{0}^{+}+b^{+}x_{0}^{+})+a^{+})}{b^{+}}-y_{0}\right),
\end{eqnarray*}
is also a solution of the same system.

Therefore, the maximum number of limit cycles of system $(C1)$ is $\frac{n(n+1)}{2}$.
\end{proof}

\renewcommand{\proofname}{Proof of statement (b) of Theorem \ref{teod}}
\begin{proof}
    From item a), for $n=1$, class $(C1)$ has at most one limit cycle. Moreover  the system 
\begin{eqnarray}\label{exemplo.a.teod}
\begin{cases}
\dot{z}=\frac{1}{-i(z-(\frac{9}{4}+6i))}, \quad \hbox{if} \quad z\in \Sigma_{+},\\
\dot{z}=\frac{1}{(1+i)(z-(\frac{1}{2}+i))},\quad \hbox{if} \quad z\in \Sigma_{c},  \\
\dot{z} =\frac{1}{i(z+4i)},\quad \hbox{if} \quad z\in \Sigma_{-},
\end{cases}
\end{eqnarray}
has a limit cycle.

Indeed, initially, note that the orbits of the equations $\dot{z}=\frac{1}{z-(\frac{9}{4}+6i)}$, $\dot{z}=\frac{1}{(1+i)(z-(\frac{1}{2}-i))}$ and $\dot{z} =\frac{1}{i(z+4i)}$ are contained within the level curves of the functions
\begin{eqnarray*}
    H^{+}(x,y)&=&\frac{x^{2}-y^{2}}{2}+6y-\frac{9}{4}x,\\
    H^{C}(x,y)&=&\frac{-x^{2}+y^{2}+2xy+y+3x}{2},\\
    H^{-}(x,y)&=&\frac{-x^{2}+y^{2}}{2}+4y.
\end{eqnarray*}

In this case, system (\ref{eq1.teod}) reduces to
\begin{eqnarray*}
\begin{cases}
    s_{2}^{2}-s_{1}^{2}=0,\\
    s_{2}-s_{2}^{2}-5t_{2}+t_{2}^{2}-2=0,\\
    \frac{1}{2}(t_{2}^{2}-t_{1}^{2})-\frac{9}{4}(t_{2}-t_{1})=0,\\
    s_{1}-s_{1}^{2}-5t_{1}+t_{1}^{2}-2=0.\\
    \end{cases}
\end{eqnarray*}

Hence,
\begin{eqnarray*}
    s_{2}=-s_{1}\quad \hbox{and} \quad t_{2}=\frac{9}{2}-t_{1},
\end{eqnarray*}
and, according to the proof in item (a), we obtain the following system
\begin{eqnarray*}
    \begin{cases}
        -2s_{1}+t_{1}-\frac{9}{4}=0,\\
        s_{1}-s_{1}^{2}-5t_{1}+t_{1}^{2}-2=0,
    \end{cases}
\end{eqnarray*}
which admits the solutions
\begin{eqnarray*}
 (p_{1},q_{1})= (-1.652018966,-1.054037933)\; \hbox{and}\; (p_{2},q_{2})=(1.652018966,5.554037933).
\end{eqnarray*}

Therefore, the system has a limit cycle that passes through $(p_{1},-1),(p_{2},-1), (q_{1},1)$ and $(q_{2},1)$.
\begin{figure}[!htb]
\centering
\includegraphics[scale=0.6]{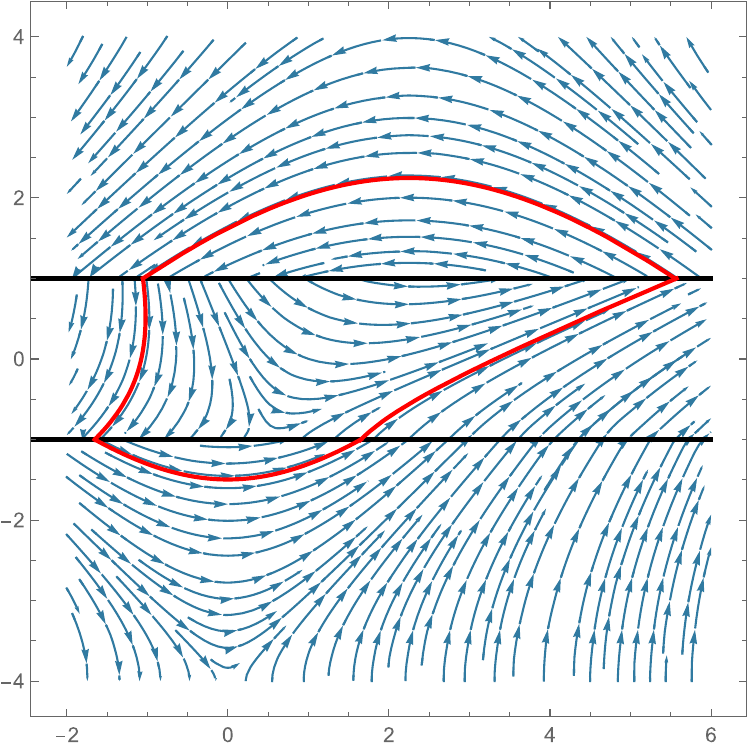}
\caption{Phase portrait of system (\ref{exemplo.a.teod}).}
\label{img20}
\end{figure}
\end{proof}

\renewcommand{\proofname}{Proof of statement (c) of Theorem \ref{teod}}
\begin{proof}
From item a), for $n=2$, class $(C1)$ has at most three limit cycle. Moreover  the system 
\begin{eqnarray}\label{exemplo.b.teod}
\begin{cases}
\dot{z}=\frac{1}{i(z-(\frac{47}{100}+6i))}, \quad \hbox{if}\quad z\in \Sigma_{+},\\
\dot{z}=\frac{1}{(1-i)(z-2i)(z-(3-2i))},\quad \hbox{if} \quad z\in \Sigma_{c},  \\
\dot{z} =\frac{1}{-i(z-(\frac{18}{10}-4i))},\quad \hbox{if} \quad z\in \Sigma_{-},
\end{cases}
\end{eqnarray}
has two limit cycles

Indeed, the orbits of the equations $\dot{z}=\frac{1}{i(z-(\frac{47}{100}+6i))}$, $\dot{z}=\frac{1}{(1-i)(z-2i)(z-(3-2i))}$ and $\dot{z}=\frac{1}{-i(z-(\frac{18}{10}-4i))}$
are contained in the level curves of the functions
\begin{eqnarray*}
    H^{+}(x,y)&=&\dfrac{-600y-47x+50x^{2}-50y^{2}}{100}\\
    H^{C}(x,y)&=&\dfrac{(2x^{3}-3x^{2}(7+2y)-6x(-2+y+y^{2})+y(60+21y+2y^{2}))}{6}\\
    H^{-}(x,y)&=&\dfrac{-40y+18x-5x^{2}+5y^{2}}{10}\\
\end{eqnarray*}

Accordingly, system (\ref{eq1.teod}) can be rewritten as
\begin{eqnarray*}
    \begin{cases}
        5(s_{2}^{2}-s_{1}^{2})-18(s_{2}-s_{1})=0,
        \\
        (-41 + 12 s_{2} - 15 s_{2}^{2} + 2 s_{2}^{3}) + (-83 + 27 t_{2}^{2} - 2 t_{2}^{3}) = 0,\\
        50(t_{2}^{2}-t_{1}^{2})-47(t_{2}-t_{1})=0,\\
        (-41 + 12 s_{1} - 15 s_{1}^{2} + 2 s_{1}^{3}) + (-83 + 27 t_{1}^{2} - 2 t_{1}^{3}) = 0.
    \end{cases}
\end{eqnarray*}

Thus,
\begin{eqnarray*}
    s_{2}=\frac{18}{5}-s_{1}\quad \hbox{and} \quad t_{2}=\frac{47}{50}-t_{1},
\end{eqnarray*}
using the same argument as in item (a), we arrive at the system
\begin{eqnarray*}
    \begin{cases}
        \frac{756 s_1}{25} - \frac{42 s_1^2}{5} + \frac{-8865374 - 1420575 t_1 + 1511250 t_1^2}{31250}=0,\\
        (-41 + 12 s_{1} - 15 s_{1}^{2} + 2 s_{1}^{3}) + (-83 + 27 t_{1}^{2} - 2 t_{1}^{3}) = 0, 
    \end{cases}
\end{eqnarray*}
whose solutions are given by
\begin{eqnarray*}
(-1.9801740022092,3.2995684935354)\; \hbox{and}\; (5.5801740022095,-2.3595684935353)\\
(-0.3333275510587,-2.0429434088287)\; \hbox{and}\; (3.9333275494283,2.9829434085994)\\
(-0.10029992808998,-2.0102455495196)\; \hbox{and}\; (3.7002992808998,29502455495196)
\end{eqnarray*}

The first pair of solutions cannot define a limit cycle. In order for a limit cycle to exit passing through the point $(p_{1},-1)$, $(p_{2},-1)$, $(q_{2},1)$ and $(q_{1},1)$, we must have the inequalities $p_{1}<p_{2}$ and $p_{1}<p_{2}$. However, in the first solution,
\begin{eqnarray*}
    (p_{1},q_{1})=(-1.980174,3.299568)\quad \hbox{and}\quad (p_{2},q_{2})=(5.580174,-2.359568),
\end{eqnarray*}
we observe that while $p_{1}<p_{2}$, the inequality $q_{1}<q_{2}$ does not hold. Therefore, the system has two limit cycles.

 \begin{figure}[!htb]
\centering
\includegraphics[scale=0.6]{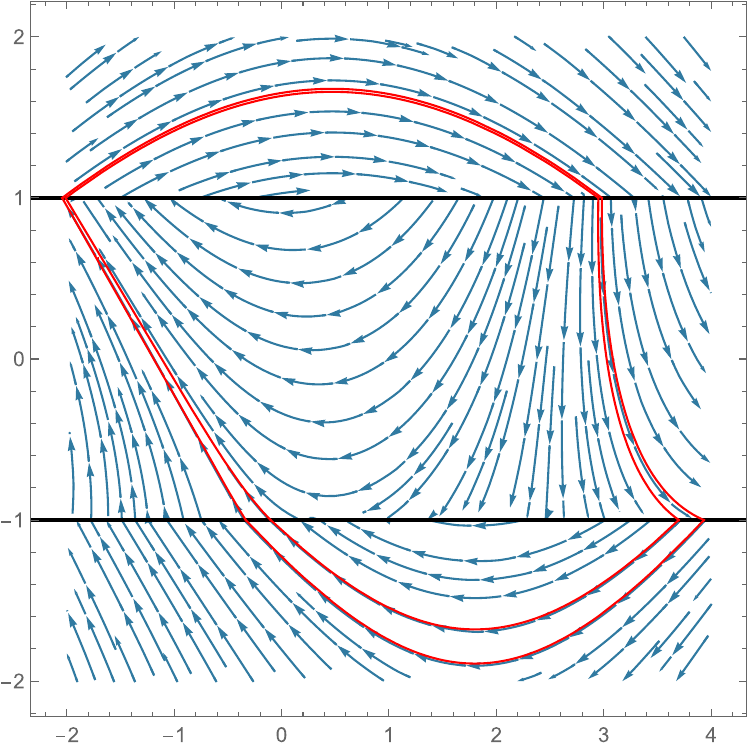}
\caption{Phase portrait of system (\ref{exemplo.b.teod}).}
\label{img21}
\end{figure}
\end{proof}

To conclude, we will prove Theorems \ref{teoe} . To establish item (a), we rely on an intermediate result, obtained based on the proof of Theorem 1 in \cite{refb5}.
\begin{lema}
  Consider a piecewise discontinuous planar system  formed by linear centers, where one of the discontinuity regions is $\mathbb{S}_{1}$. Then, the maximum number of crossing limit cycles that intersect $\mathbb{S}_{1}$ at two points is three.
\end{lema}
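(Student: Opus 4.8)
The plan is to exploit that every linear center carries a quadratic first integral, so that the two orbit arcs forming a crossing limit cycle are pieces of conics (ellipses), and to convert the closing condition into a single trigonometric equation whose degree can be controlled. First I would recall the standard fact, entirely consistent with the first-integral viewpoint of Proposition \ref{lc2}, that the zone inside $\mathbb{S}_{1}$ and the zone outside it each support a linear center admitting a quadratic polynomial first integral, say $H^{\mathrm{in}}(x,y)$ and $H^{\mathrm{out}}(x,y)$, whose level curves are the orbits. A crossing limit cycle meeting $\mathbb{S}_{1}$ exactly at two points $P_{1},P_{2}$ is then forced to be the union of an arc of a level curve of $H^{\mathrm{in}}$ joining $P_{1}$ to $P_{2}$ and an arc of a level curve of $H^{\mathrm{out}}$ joining $P_{2}$ back to $P_{1}$; hence such a cycle exists precisely when
\[
H^{\mathrm{in}}(P_{1})=H^{\mathrm{in}}(P_{2}), \qquad H^{\mathrm{out}}(P_{1})=H^{\mathrm{out}}(P_{2}).
\]

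Next I would parametrize $P_{j}=(\cos\phi_{j},\sin\phi_{j})$ on $\mathbb{S}_{1}$. Restricting a quadratic polynomial to $\mathbb{S}_{1}$ and using $\cos^{2}\phi+\sin^{2}\phi=1$ turns each first integral into a trigonometric polynomial of degree $2$, of the form $H^{\mathrm{in}}(\phi)=p_{0}+p_{1}\cos\phi+p_{2}\sin\phi+p_{3}\cos 2\phi+p_{4}\sin 2\phi$, and similarly for $H^{\mathrm{out}}$ with coefficients $q_{i}$. Setting $\psi=\tfrac{\phi_{1}+\phi_{2}}{2}$ and $\delta=\tfrac{\phi_{1}-\phi_{2}}{2}$ and applying the sum-to-product identities, each difference $H(\phi_{1})-H(\phi_{2})$ factors as $2\sin\delta$ times $\bigl(L(\psi)+2\cos\delta\,Q(\psi)\bigr)$, where $L(\psi)=-p_{1}\sin\psi+p_{2}\cos\psi$ has degree $1$ and $Q(\psi)=-p_{3}\sin 2\psi+p_{4}\cos 2\psi$ has degree $2$. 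Since $P_{1}\neq P_{2}$ forces $\sin\delta\neq 0$, the closing conditions reduce to
\[
L^{\mathrm{in}}(\psi)+2\cos\delta\,Q^{\mathrm{in}}(\psi)=0, \qquad L^{\mathrm{out}}(\psi)+2\cos\delta\,Q^{\mathrm{out}}(\psi)=0,
\]
two equations that are affine in the single quantity $\cos\delta$.

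The decisive step is to eliminate $\cos\delta$: consistency of these two affine equations is equivalent to the vanishing of the determinant, namely
\[
\Phi(\psi):=L^{\mathrm{in}}(\psi)\,Q^{\mathrm{out}}(\psi)-L^{\mathrm{out}}(\psi)\,Q^{\mathrm{in}}(\psi)=0,
\]
a trigonometric polynomial of degree at most $3$, hence with at most $6$ zeros in $[0,2\pi)$. Each admissible zero $\psi$ fixes $\cos\delta$, and therefore $\delta$ up to sign, producing a single unordered pair $\{P_{1},P_{2}\}$, so naively one obtains at most $6$ cycles. To halve this, I would invoke the symmetry $L(\psi+\pi)=-L(\psi)$ and $Q(\psi+\pi)=Q(\psi)$, which yields $\Phi(\psi+\pi)=-\Phi(\psi)$; thus the zeros of $\Phi$ split into genuine pairs $\{\psi,\psi+\pi\}$, and a direct check shows that $\psi$ and $\psi+\pi$ yield the very same unordered pair $\{P_{1},P_{2}\}$ (with $\delta$ replaced by $\pi-\delta$). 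Consequently the $6$ zeros account for at most $3$ distinct crossing limit cycles, which is the asserted bound.

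The main obstacle will be the bookkeeping of degenerate cases rather than the generic count. One must treat separately the zeros $\psi$ at which $Q^{\mathrm{in}}(\psi)=Q^{\mathrm{out}}(\psi)=0$, where the elimination is not licit; the possibility that $\Phi\equiv 0$, a non-generic configuration that I would rule out as not producing isolated cycles; and the admissibility constraints $\lvert\cos\delta\rvert\le 1$ and $\delta\neq 0$. Since each of these can only discard candidate pairs, they preserve the upper bound, but verifying that none of them ever increases the count is precisely the delicate point on which the sharpness of the estimate rests.
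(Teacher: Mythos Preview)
Your elimination argument is correct and rather elegant for the two-zone situation you implicitly set up, but it is a genuinely different route from the paper's and proves a weaker statement. The paper works with \emph{only one} linear center, the one filling a single zone adjacent to $\mathbb{S}_{1}$: writing its first integral as $H(x,y)=4(x+by)^{2}+8(cx-dy)+\omega^{2}y^{2}$, it imposes $H(P_{1})=H(P_{2})$ for four putative crossing pairs on $\mathbb{S}_{1}$, uses the constraint $x_{i}^{2}+y_{i}^{2}=1$ on all eight points, and solves to obtain $b=c=d=0$, $\omega=2$, so the only such center is the rotation about the origin, for which $\mathbb{S}_{1}$ is itself an orbit and no isolated crossing cycle can occur. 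Your approach, by contrast, needs \emph{both} closing conditions $H^{\mathrm{in}}(P_{1})=H^{\mathrm{in}}(P_{2})$ and $H^{\mathrm{out}}(P_{1})=H^{\mathrm{out}}(P_{2})$ to produce the resultant $\Phi(\psi)$.

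This difference is not cosmetic. The lemma is stated for systems in which $\mathbb{S}_{1}$ is merely \emph{one} of the discontinuity curves, and it is used to prove Theorem~\ref{teoe}(a) for the three-zone classes $(C2)$ and $(C3)$, whose crossing limit cycles typically meet the second circle $\mathbb{S}_{2}$ or $\mathbb{S}_{3}$ as well. In those configurations the arc on one side of $\mathbb{S}_{1}$ traverses two distinct zones and is \emph{not} a level curve of any single quadratic $H^{\mathrm{out}}$, so your equation $H^{\mathrm{out}}(P_{1})=H^{\mathrm{out}}(P_{2})$ is simply unavailable and the elimination collapses. The paper's single-Hamiltonian argument survives because one side of $\mathbb{S}_{1}$ is always a single zone in the cases of interest. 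As written, then, your proof establishes the bound only when the cycle meets no other switching curve; to reach the lemma in the generality needed for Theorem~\ref{teoe} you would have to drop the reliance on $H^{\mathrm{out}}$ altogether.
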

\begin{proof}
    In the region delimited by $\mathbb{S}_{1}$, consider the arbitrary linear center, as demonstrated in \cite{refb13},  which can be written as
    \begin{eqnarray*}
        \dot{x}=-bx-\dfrac{4b^{2}+\omega^{2}}{4a}y+d,\quad \dot{y}=ax+by+c,
    \end{eqnarray*}
    with $a>0$ and $\omega >0$. Then, its trajectories are contained in the level curves of the function
    \begin{eqnarray*}
        H(x,y)=4(ax+by)^{2}+8a(cx-dy)+\omega^{2}y^2. 
    \end{eqnarray*}
    
    Note that, after the scaling of time $\sigma=at$, we can assume that $a=1$.
    
    Suppose the discontinuous piecewise differential system where one of the regions of discontinuity is $\mathbb{S}_{1}$, delimiting a linear center has four crossing periodic orbits that intersect $\mathbb{S}_{1}$ in the points $(x_{i},y_{i})$ for $i=1,2$, so these points must satisfy 
    \begin{eqnarray*}
        H(x_{1},y_{1})-H(x_{2},y_{2})=0
    \end{eqnarray*}
    or equivalently
    \begin{eqnarray}\label{mnlc.lema}
        4(by_{1}+x_{1})^{2}-4(by_{2}+x_{2})^{2}+8c(x_{1}-x_{2})-8d(y_{1}-y_{2})+\omega^{2}(y_{1}^{2}-y_{2}^{2})=0.
    \end{eqnarray}

    Suppose the four solutions are $(p_{1},p_{2})$, $(q_{1},q_{2})$, $(t_{1},t_{2})$ and $(w_{1},w_{2})$ with $p_{i},q_{i},t_{i}$ and $w_{i}\in \mathbb{R}^{2}$, $p_{i}\neq p_{i}$, $q_{i}\neq q_{j} $, $t_{i}\neq t_{j}$ and $w_{i}\neq w_{j}$ for $i\neq j $ and $i,j=1,2$. Let us consider $p_{i}=(x_{i},y_{i})$, $q_{i}=(k_{i},l_{i})$, $t_{i}=(m_{i},n_{i})$, $w_{i}=(h_{i},j_{i})$, for $i=1,2$. 

    Substituting $(p_{1},p_{2})$ and $(q_{1},q_{2})$ into (\ref{mnlc.lema}), we obtain the coefficients $d$ and $c$. Substituting $(t_{1},t_{2})$ and $(w_{1},w_{2})$ into (\ref{mnlc.lema}), we obtain $\omega$.

    Since the points $p_{i}=(x_{i},y_{i})$, $q_{i}=(k_{i},l_{i})$, $t_{i}=(m_{i},n_{i})$, $w_{i}=(h_{i},j_{i})$ belong to $\mathbb{S}_{1}$, then $x_{i}^{2}=1-y_{i}^{2}$, $k_{i}^{2}=1-l_{i}^{2}$, $m_{i}^{2}=1-n_{i}^{2}$, $h_{i}^{2}=1-j_{i}^{2}$. Thus, we obtain $\omega=2$ and $b=0$, and it follows that $c=d=0$. Therefore, inside $\mathbb{S}_{1}$, we get the linear center
    \begin{eqnarray*}
        \dot{x}=-y, \quad \dot{y}=x,
    \end{eqnarray*}
  which leads to a contradiction. Thus, the maximum number of limit cycles the system can have is three.   
\end{proof}

    While this proof is based on the techniques developed in \cite{refb5}, similar ideas are also explored in \cite{refb14}.

\renewcommand{\proofname}{Proof of statement (b) of Theorem \ref{teoe}}
\begin{proof}
The system
\begin{eqnarray}\label{exemplo.b1.teoe}
\begin{cases}
\dot{z}=i(z-(1.0980423999-i0.80012406276)), \quad \hbox{if} \quad z\in \Sigma_{+}^{E},\\
\dot{z}=i(z-(1+i0.995016)),\quad \hbox{if} \quad z\in \Sigma_{c}^{E},  \\
\dot{z} =-i(z-(3.2900009902+i0.9400008902)),\quad \hbox{if} \quad z\in \Sigma_{-}^{E},
\end{cases}
\end{eqnarray}
has two limit cycles.

In fact, by letting $w=\frac{-2iz}{z-1}$, system (\ref{exemplo.b1.teoe}) is transformed into
\begin{eqnarray}\label{exemplo.b2.teoe}
    \begin{cases}
\dot{w}=-w(w(1-(1.0980423999-i0.80012406276))-2i),\quad \hbox{if}\quad w\in \Sigma_{+},\\
\dot{w}=-w(w(1-(1+i0.995016))-2i),\quad \hbox{if} \quad w\in \Sigma_{c},  \\
\dot{w} =w(w(1-(3.2900009902+i0.9400008902))-2i),\quad \hbox{if} \quad w\in \Sigma_{-}.
\end{cases}
\end{eqnarray}

Note that, in each region, it is given by
\begin{eqnarray*}
    \dot{w}=\pm w(w(1-z_{0}^{\sigma })-2i),
\end{eqnarray*}
where $z_{0}^{\sigma}=x_{0}^{\sigma }+iy_{0}^{\sigma}$,  depends on the region considered, $z_{0}^{+}=1.0980423999-i0.80012406276$, $z_{0}^{c}=1+i0.995016$ and $z_{0}^{-}=3.2900009902+i0.9400008902$. Also note that the solutions of previous equation are contained in the level curves of the function
\begin{eqnarray*}
    H^{\sigma }(x,y)=\frac{((1-x_{0}^{\sigma })x+y_{0}^{\sigma}y)^{2}+((1-x_{0}^{\sigma })y-y_{0}^{\sigma}x-2)^{2}}{x^{2}+y^{2}},
\end{eqnarray*}
for $\sigma =+,c,-$.

Now, we seek the limit cycles of the system (\ref{exemplo.b2.teoe}) that intersect the line $\Im(w)=-1$ at two distinct point $(s_{1},-1)$ and $(s_{2},-1)$, and the line $\Im(w)=1 $ at the points $(t_{1},1)$ and $(t_{2},1)$. These points must then satisfy the following conditions
\begin{eqnarray}\label{sistemateoe.b}
\begin{cases}
(s_{1} - s_{2}) \left( -s_{2} (-2 + x_{0}^{-}) - y_{0}^{-} + s_{1} \left( 2 - x_{0}^{-} + s_{2}  y_{0}^{-} \right) \right) = 0,\\
-4 \left( -2 + (2 + s_{1}^{2}) - y_{0}^{c} s_{1} + y_{0}^{c} (1 + s_{1}^{2}) t_{1} + t_{1}^{2} (-1 - y_{0}^{c} s_{1}) \right) = 0, \\
(t_{1} - t_{2}) \left( t_{2}  x_{0}^{+} + y_{0}^{+} + t_{1} \left( x_{0}^{+} - y_{0}^{+}  t_{2} \right) \right) = 0, \\
-4 \left( -2 + (2 + s_{1}^{2}) - y_{0}^{c} s_{1} + y_{0}^{c} (1 + s_{1}^{2}) t_{1} + t_{1}^{2} (-1 - y_{0}^{c} s_{1}) \right) = 0.
\end{cases}
\end{eqnarray}

Given that $s_{1}\neq s_{2}$ and $t_{1}\neq t_{2}$, it follows from the first and third equations.

\[
s_2 = \frac{(-2 + x_0^{-}) s_1 + y_0^{-}}{2 - x_0^{-} + y_0^{-} s_1}
\quad \text{e} \quad
t_2 = \frac{-x_0^{+} t_1 - y_0^{+}}{x_0^{+} - y_0^{+} t_1}
\]

Substitution the above expressions into the second equation in (\ref{sistemateoe.b}), we arrive at the following system
\begin{eqnarray*}
\begin{cases}
&-4 \left( -2 + (2 + s_{1}^{2}) - 0.995016\, s_{1} + 0.995016\, (1 + s_{1}^{2}) t_{1} + t_{1}^{2} (-1 - 0.995016\, s_{1}) \right) = 0, \\[1em]
&-4 \bigg( 
-2 + \left(2 + \left( \frac{1.2900009902 s_{1} + 0.9400008902}{-1.2900009902 + 0.9400008902 s_{1}} \right)^{2} \right) 
- 0.995016 \left( \frac{1.2900009902 s_{1} + 0.9400008902}{-1.2900009902 + 0.9400008902 s_{1}} \right) \\
&\quad + 0.995016 \left(1 + \left( \frac{1.2900009902 s_{1} + 0.9400008902}{-1.2900009902 + 0.9400008902 s_{1}} \right)^{2} \right)
\left( \frac{-1.0980423999 t_{1} - 0.80012406276}{1.0980423999 - 0.80012406276 t_{1}} \right) \\
&\quad + \left( \frac{-1.0980423999 t_{1} - 0.80012406276}{1.0980423999 - 0.80012406276 t_{1}} \right)^{2}
\left(-1 - 0.995016 \left( \frac{1.2900009902 s_{1} + 0.9400008902}{-1.2900009902 + 0.9400008902 s_{1}} \right) \right) 
\bigg) = 0,
\end{cases}
\end{eqnarray*}
for which the solutions are
\begin{eqnarray*}
   (p_{1},q_{1})= (-2.422768823,-2.422768823) \; \hbox{and}\; (p_{2},q_{2})=(0.6125945264,0.6125945264)\\
  (\tilde{p}_{1},\tilde{q}_{1})=  (-1.632401134,-1.632401134)\; \hbox{and}\; (\tilde{p}_{2},\tilde{q}_{2})=(0.4127508948,0.4127508948).
\end{eqnarray*}

Then, system (\ref{exemplo.b2.teoe}) has two limit cycles, one passing through $(p_{1},-1)$, $(p_{2},-1)$, $(q_{2},1)$ and $(q_{1},1)$, and another one passing through $(\tilde{p}_{1},-1)$, $(\tilde{p}_{2},-1)$, $(\tilde{q}_{2},1)$ and $(\tilde{q}_{1},1)$. Consequently, system (\ref{exemplo.b1.teoe}) also has two limit cycles.
 \begin{figure}[!htb]
\centering
\includegraphics[scale=0.6]{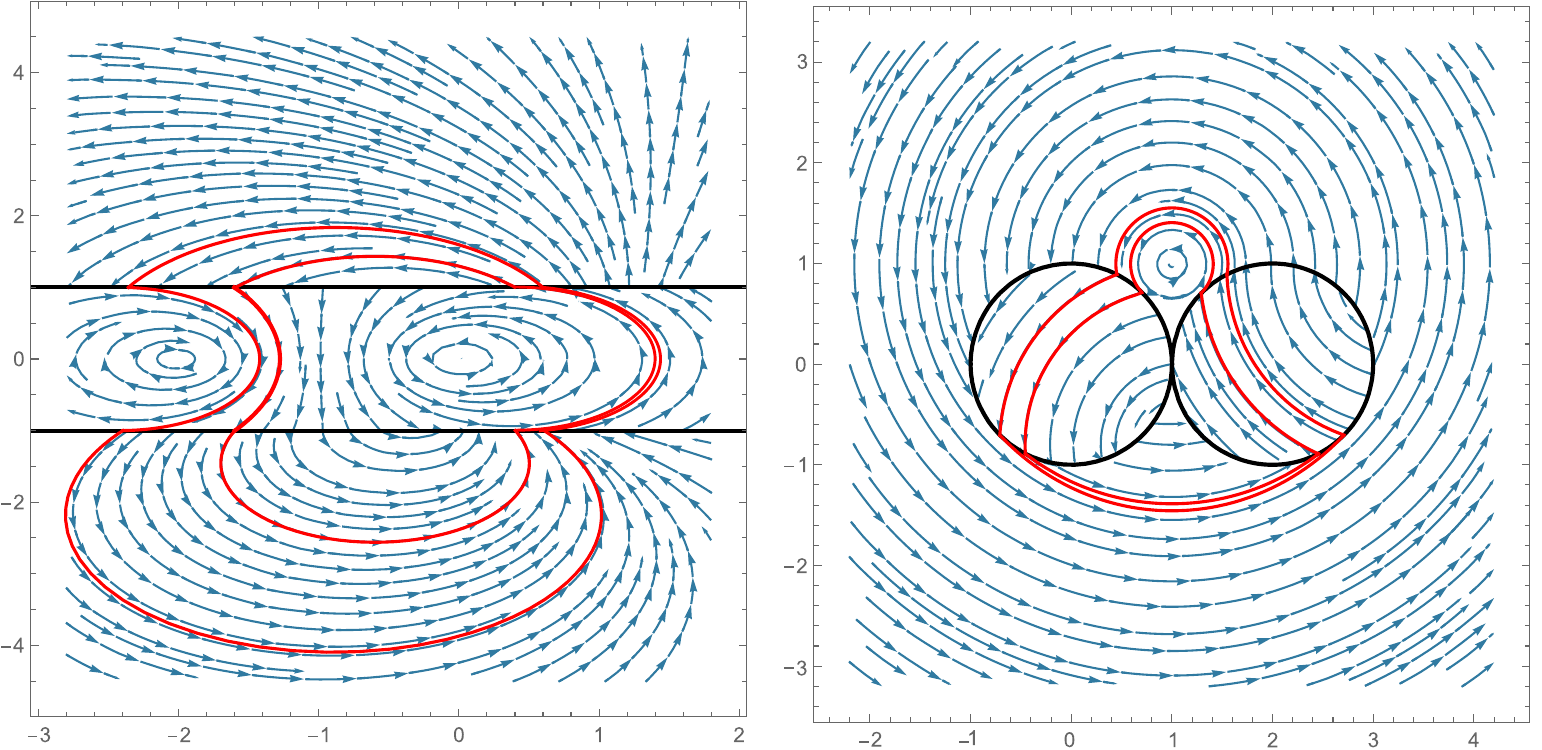}
\caption{Phase portrait of systems  (\ref{exemplo.b2.teoe}) and (\ref{exemplo.b1.teoe}).}
\label{imga10}
\end{figure}
\end{proof}

\renewcommand{\proofname}{Proof of statement (c) of Theorem \ref{teoe}}
\begin{proof}
The system
\begin{eqnarray}\label{exemplo.b1.teof}
\begin{cases}
\dot{z}=i(z-(-0.2-i0.6)), \quad \hbox{if} \quad z\in \Sigma_{+}^{I},\\
\dot{z}=-i(z-(1+i)),\quad \hbox{if} \quad z\in \Sigma_{c}^{I},  \\
\dot{z} =i(z-(-0.6-i0.4)),\quad \hbox{if}\quad z\in \Sigma_{-}^{I},
\end{cases}
\end{eqnarray}
has two limit cycles.

In fact, letting $w=-\frac{2ix}{z-1}$, system (\ref{exemplo.b1.teof}) is transformed into
\begin{eqnarray}\label{exemplo.b2.teof}
    \begin{cases}
\dot{w}=-(w+2i)(w(1-(-0.2-0.6i))-2i(-0.2-0.6i)),\quad \hbox{if} \quad w\in \Sigma_{+},\\
\dot{w}=(w+2i)(w(1-(1+i))-2i(1+i)),\quad \hbox{if} \quad w\in \Sigma_{c},  \\
\dot{w} =-(w+2i)(w(1-(-0.6-0.4i))-2i(-0.6-0.4i)),\quad \hbox{if} \quad w\in\Sigma_{-}.
\end{cases}
\end{eqnarray}

Note that, in each region, it is given by
\begin{eqnarray*}
    \dot{w}=\pm (w+2i)(w(1-z_{0}^{j})-2iz_{0}^{j}),
\end{eqnarray*}
where  $z_{0}^{\sigma }=x_{0}^{\sigma}+iy_{0}^{\sigma}$, depends on the region considered $z_{0}^{+}=-0.2-0.6i$, $z_{0}^{C}=1+i$ and $z_{0}^{-}=-0.6-0.4i$. Also note that the solutions of previous equation are contained in the level curves of the function
\begin{eqnarray*}  
    H^{\sigma }(x,y)=\dfrac{(x(1-x_{0}^{\sigma })+y_{0}^{\sigma }(2+y))^{2}+((1-x_{0}^{\sigma })y-y_{0}^{\sigma }x-2x_{0}^{\sigma })^{2}}{x^{2}+(y+2)^{2}},
\end{eqnarray*}
for $\sigma =+,c,-$.

Now, we look for the limit cycles that of system (\ref{exemplo.b2.teof}) that intersect the  line $\Im(w)=-1$ at the points $(s_{1},-1)$ and $(s_{2},-1)$, with $s_{1}\neq s_{2}$, and intersects the line $\Im (w)=1$ at the points $(t_{1},1)$ and $(t_{2},1)$, with $t_{1}\neq t_{2}$. Then, these points must satisfy
\begin{eqnarray}\label{sistemateof.b}
   \begin{cases}
   
   (s_{1} - s_{2}) \left( s_{2} (-0.6) - (-0.4) + s_{1} \left( -0.6 + s_{2} (-0.4) \right) \right) = 0,\\
   2 + (6 + t_{2}^2)  + s_{2} (9 + t_{2}^2)  - (t_{2}  + s_{2}^2 (-2 + 3  + t_{2} )) = 0,\\
    (t_1 - t_2) \left( -2.6\, t_2 + 5.4 + t_1(-2.6 - 0.6\, t_2) \right) = 0,\\ 
    2 + (6 + t_{1}^2)  + s_{1} (9 + t_{1}^2)  - (t_{1}  + s_{1}^2 (-2 + 3  + t_{1} )) = 0.
   \end{cases} 
\end{eqnarray}

Since $s_{1}\neq s_{2}$ and $t_{1}\neq t_{2}$, from the first and third equations of (\ref{sistemateof.b}), we obtain that
\begin{eqnarray*}
    s_{2} = \frac{0.6s_{1} - 0.4}{-0.6 - 0.4s_{1}}\quad \hbox{and} \quad t_{2} = \frac{2.6 t_{1} - 5.4}{-2.6 - 0.6 t_{1}}.
\end{eqnarray*}

By substituting the expressions in the second equation of (\ref{sistemateof.b}), we obtain the following system
\begin{align*}
\begin{cases}
2 + (6 + t_1^2) + s_1 (9 + t_1^2) - \left( t_1 + s_1^2 (1 + t_1 (1)) \right) = 0, \\
2 + \left( 6 + \left( \frac{2.6 t_1 - 5.4}{-2.6 - 0.6 t_1} \right)^2 \right) + \frac{0.6 s_1 - 0.4}{-0.6 - 0.4 s_1} \left( 9 + \left( \frac{2.6 t_1 - 5.4}{-2.6 - 0.6 t_1} \right)^2 \right) \\
\quad - \left( \frac{2.6 t_1 - 5.4}{-2.6 - 0.6 t_1} + \left( \frac{0.6 s_1 - 0.4}{-0.6 - 0.4 s_1} \right)^2 \left( 1 + \frac{2.6 t_1 - 5.4}{-2.6 - 0.6 t_1} \right) \right) = 0.
\end{cases}
\end{align*}

This system admits the solutions
\begin{eqnarray*}
    (p_{1},q_{1})=(-1.260240290,-2.5680344499)\; \hbox{and} \; (p_{2},q_{2})=(12.05523832,11.40211976)\\
     (\tilde{p}_{1},\tilde{q}_{1})=(-1.128596670,-1.6659961088)\; \hbox{and}\;  (\tilde{p}_{2},\tilde{q}_{2})=(7.25059468,6.080714625).
\end{eqnarray*}

Therefore, system (\ref{exemplo.b2.teof}) has two limit cycles, one passing though the points $(p_{1},-1)$, $(p_{2},-1)$, $(q_{2},1)$ and $(q_{1},1)$, and another passing through $(\tilde{p}_{1},-1)$, $(\tilde{p}_{2},-1), (\tilde{p}_{2},1)$ and $(\tilde{p}_{1},1)$. Thus, system (\ref{exemplo.b1.teof}) also has two limit cycles.
 \begin{figure}[!htb]
\centering
\includegraphics[scale=0.6]{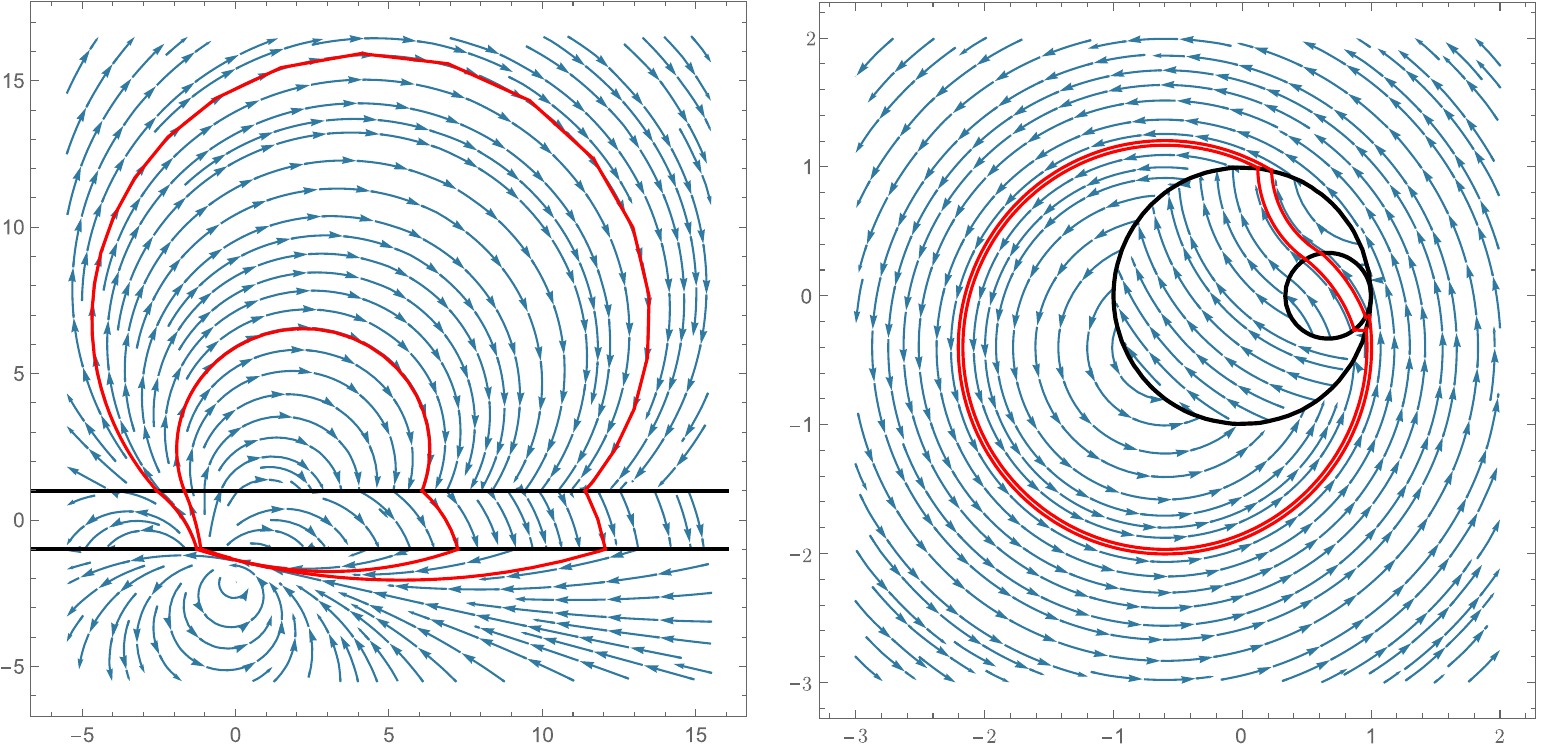}
\caption{Phase portrait of systems  (\ref{exemplo.b2.teof}) and (\ref{exemplo.b1.teof}).}
\label{imga11}
\end{figure}
\end{proof}
\section{Acknowledgments}
The first author is supported by CAPES grand 88887.950547/2024-00. The second author is partially supported by FAPESP grant 2023/02959-5 and 2024/15612-6, CNPq grant
302154/2022-1, and ANR-23-CE40-0028.


\begin{thebibliography}{99}

\bibitem{refb14}
S.R. Alva, A.C. Mereu, Crossing limit cycles from discontinuous piecewise linear differential centers separated by two circles, \textit{Int. J. Bifurcation Chaos} \textbf{35} (2025) 2550090.

\bibitem{refb5}
M.E. Anacleto, J. Llibre, C. Valls, C. Vidal, Limit cycles of discontinuous piecewise differential systems formed by linear centers in R 
2
  and separated by two circles, \textit{Nonlinear Anal. Real World Appl.} \textbf{60} (2021) 103291.

\bibitem{refb20}
J.A. Casimiro, J. Llibre, Limit cycles of discontinuous piecewise differential Hamiltonian systems separated by a straight line, \textit{Axioms} \textbf{13} (2024) 161.

\bibitem{refb6}
B. Coll, A. Gasull, R. Prohens, Bifurcation of limit cycles from two families of centers, \textit{Dyn. Contin. Discrete Impuls. Syst. Ser. A Math. Anal.} \textbf{12} (2) (2005) 275--287.

\bibitem{refb8}
J.B. Conway, \textit{Functions of One Complex Variable}, Graduate Texts in Mathematics, vol. 11, Springer-Verlag, New York-Heidelberg, 1973.

\bibitem{refb22}
M. di Bernardo, C. Budd, A.R. Champneys, P. Kowalczyk, \textit{Piecewise-smooth Dynamical Systems: Theory and Applications}, Appl. Math. Sci., vol. 163, Springer, London, 2008.

\bibitem{refb4}
A.F. Filippov, \textit{Differential Equations with Discontinuous Right-Hand Sides}, Mathematics and its Applications (Soviet Series), vol. 18, Kluwer Academic Publishers, Dordrecht, 1988.

\bibitem{refb25}
W. Fulton, \textit{Algebraic Curves: An Introduction to Algebraic Geometry}, Addison-Wesley, Boston, MA, 2008.

\bibitem{refb1}
A. Garijo, A. Gasull, X. Jarque, Local and global phase portrait of equation  
$\dot{z}=f(z)$, \textit{Discrete Contin. Dyn. Syst.} \textbf{17} (2) (2007) 309--329.

\bibitem{refb3}
A. Gasull, G. Rondón, P.R. da Silva, On the number of limit cycles for piecewise polynomial holomorphic systems, \textit{SIAM J. Appl. Dyn. Syst.} \textbf{23} (2024) 2593--2622.

\bibitem{refb31}
A. Gasull, G. Rondón, P.R. da Silva, Simultaneous bifurcation of limit cycles for piecewise holomorphic systems, \textit{J. Dyn. Differ. Equ.}, to appear (2025). https://doi.org/10.1007/s10884-025-10435-5.

\bibitem{refb2}
L.F.S. Gouveia, G. Rondón, P.R. da Silva, Piecewise holomorphic systems, \textit{J. Differ. Equ.} \textbf{332} (2022) 440--472.

\bibitem{refb21}
L.F.S. Gouveia, P.R. da Silva, G. Rondón, Global phase portrait and local integrability of holomorphic systems, \textit{Qual. Theory Dyn. Syst.} \textbf{22} (2023) 1--26.

\bibitem{refb32}
M. Han, W. Chen, Melnikov function method and limit cycle bifurcations, \textit{J. Differ. Equ.} \textbf{157} (2) (1999) 287--321.

\bibitem{refb23}
Y. Ilyashenko, Centennial history of Hilbert’s 16th problem, \textit{Bull. Amer. Math. Soc.} \textbf{39} (3) (2002) 301--354.

\bibitem{refb11}
J. Itikawa, J. Llibre, D.D. Novaes, A new result on averaging theory for a class of discontinuous planar differential systems with applications, \textit{Rev. Mat. Iberoam.} \textbf{33} (4) (2017) 1247--1265.

\bibitem{refb10}
S. Li, X. Cen, Y. Zhao, Bifurcation of limit cycles by perturbing piecewise smooth integrable non-Hamiltonian systems, \textit{Nonlinear Anal. Real World Appl.} \textbf{34} (2017) 140--148.

\bibitem{refb7}
J. Llibre, M.A. Teixeira, Limit cycles in Filippov systems having a circle as switching manifold, \textit{Chaos} \textbf{32} (5) (2022).

\bibitem{refb13}
J. Llibre, M.A. Teixeira, Piecewise linear differential systems with only centers can create limit cycles?, \textit{Nonlinear Dyn.} \textbf{91} (2018) 249--255.

\bibitem{refb12}
J. Llibre, D.D. Novaes, C.A.B. Rodrigues, Averaging theory at any order for computing limit cycles of discontinuous piecewise differential systems with many zones, \textit{Phys. D} \textbf{353--354} (2017).

\bibitem{refb33}
J. Llibre, X. Zhang, On the number of limit cycles for perturbations of quadratic centers, \textit{Dyn. Contin. Discrete Impuls. Syst. Ser. A Math. Anal.} \textbf{9} (4) (2002) 465--481.

\bibitem{refb24}
C.G. Pessoa, R.M. Ribeiro, Limit cycles of planar piecewise linear Hamiltonian differential systems with two or three zones, \textit{Electron. J. Qual. Theory Differ. Equ.} \textbf{2022} (27) (2022) 1--19.

\bibitem{refb30}
O. Ramirez, A.M. Alves, Bifurcation of limit cycles by perturbing piecewise non-Hamiltonian systems with nonlinear switching manifold, \textit{Nonlinear Anal. Real World Appl.} \textbf{57} (2021).

\bibitem{refb9}
Y. Xiong, C. Wang, Limit cycle bifurcations of planar piecewise differential systems with three zones, \textit{Nonlinear Anal. Real World Appl.} \textbf{61} (2021) 103333.
\end{thebibliography}
\end{document}